\newtheorem{theorem}{Theorem}[section]
\newtheorem{proposition}[theorem]{Proposition}
\newtheorem{corollary}[theorem]{Corollary}
\newtheorem{lemma}[theorem]{Lemma}
  \newtheorem{example}[theorem]{Example}
\newenvironment{proof}{\noindent\textit{Proof.}}
{\QED\vskip\theorempostskipamount} 
\newenvironment{proofof}[1]{\noindent\textit{Proof \protect{#1}.}}
{\QED\vskip\theorempostskipamount}
\def\petitcarre{\vrule height4pt width 4pt depth0pt}
\def\QED{\relax\ifmmode\eqno{\hbox{\petitcarre}}\else{%
  \unskip\nobreak\hfil\penalty50\hskip2em\hbox{}\nobreak\hfil
  \petitcarre
  \parfillskip=0pt \finalhyphendemerits=0\par\smallskip}
  \fi}
\newcommand{\Z}{\mathbb{Z}}
\newcommand{\N}{\mathbb{N}}
\DeclareMathOperator{\Card}{Card}
\def\u(#1){\underline{#1\!}\,}
\def\1{\mathbf{1}}
\newcommand{\RR}{\mathcal R}
\newcommand{\LL}{\mathcal L}
\newcommand{\E}{\mathcal E}
\newcommand{\CR}{\mathcal{CR}}
\numberwithin{equation}{section}
    \def\soft#1{\leavevmode\setbox0=\hbox{h}\dimen7=\ht0\advance
    \dimen7 by-1ex\relax\if t#1\relax\rlap{\raise.6\dimen7
    \hbox{\kern.3ex\char'47}}#1\relax\else\if T#1\relax
    \rlap{\raise.5\dimen7\hbox{\kern1.3ex\char'47}}#1\relax
    \else\if d#1\relax\rlap{\raise.5\dimen7\hbox{\kern.9ex
    \char'47}}#1\relax\else\if D#1\relax\rlap{\raise.5\dimen7
    \hbox{\kern1.4ex\char'47}}#1\relax\else\if l#1\relax
    \rlap{\raise.5\dimen7\hbox{\kern.4ex\char'47}}#1\relax
    \else\if L#1\relax\rlap{\raise.5\dimen7\hbox{\kern.7ex
    \char'47}}#1\relax\else\message{accent \string\soft
    \space #1 not defined!}#1\relax\fi\fi\fi\fi\fi\fi} 
\title{Eventually dendric shift spaces}
\author{Francesco Dolce and Dominique Perrin}
\begin{document}

\maketitle
\tableofcontents

\begin{abstract}
We define a new class of shift spaces which contains a number of classes of interest, like Sturmian shifts used in discrete geometry.
We show that this class is closed under two natural transformations.
The first one is called conjugacy and is obtained by sliding block coding.
The second one is called the complete bifix decoding, and typically includes codings by non overlapping blocks of fixed length.
\end{abstract}

\section{Introduction}
Shift spaces are the sets of two-sided infinite words avoiding the words of a given language $F$ denoted $X_F$.
In this way the traditional hierarchy of classes of languages translates into a hierarchy of shift spaces.
The shift space $X_F$ is called of finite type when one starts with a finite language $F$ and sofic when one starts with a regular language $F$.

There is a natural equivalence between shift spaces called conjugacy.
Two shift spaces are conjugate if there is a sliding block coding sending bijectively one upon the other (in this case the inverse map has the same form).
Many basic questions are still open concerning conjugacy.
For example, it is surprisingly not known whether the conjugacy of shifts of finite type is decidable.

The complexity of a shift space $X$ is the function $n\mapsto p(n)$ where $p(n)$ is the number of admissible blocks of length $n$ in $X$.
The complexities of conjugate shifts of linear complexity have the same growth rate.

In this paper, we are interested in shift spaces of at most linear complexity.
This class is important for many reasons and includes the class of Sturmian shifts which are by definition those of complexity $n+1$, which play a role as binary codings of discrete lines.
Several books are devoted to the study of such shifts (see~\cite{PytheasFogg2002} or~\cite{Queffelec2010} for example).
We define a new class of shifts of at most linear complexity, called eventually dendric, extends the class of dendric shifts introduced in~\cite{BertheDeFeliceDolceLeroyPerrinReutenauerRindone2015} (under the name of \emph{tree sets} given to their language) which themselves extend naturally episturmian shifts (also
called Arnoux-Rauzy shifts) and interval exchange shifts.

Our first main result is that this class is closed under conjugacy.
We also prove that it is closed under a second transformation, namely complete bifix decoding, which is important because it includes coding by non overlapping blocks of fixed length.
These two results show the robustness of the class of eventually dendric shifts, giving a strong motivation for its introduction.

The class of dendric shifts (defined below) is known to be closed under complete bifix decoding (see~\cite{BertheDeFeliceDolceLeroyPerrinReutenauerRindone2013m}) but it is not closed under conjugacy.
This fact was the initial motivation for introducing eventually dendric shifts, following a suggestion of Fabien Durand.

We now describe the results in some more detail.

A dendric shift $X$ is defined by introducing the extension graph of a word in the language $\LL(X)$ of $X$ and by requiring that this graph is a tree for every word in $\LL(X)$.
It has many interesting properties which involve free groups.
In particular, in a dendric shift $X$ on the alphabet $A$, the group generated by the set of return words to some word in $\LL(X)$ is the free group on the alphabet and, in particular, has $\Card{A}$ free generators.
This generalizes a property known for Sturmian shifts whose link with automorphisms of the free group was noted by Arnoux and Rauzy.

The class of eventually dendric shifts, introduced in this paper, is defined by the property that the extension graph of every word $w$ in the language of the shift is a tree for every long enough word $w$.

Our main results are that the class of eventually dendric shifts is closed under
\begin{itemize}
	\item conjugacy (Theorem~\ref{theoremConjugacy}), and
	\item complete bifix decoding (Theorem~\ref{theoremCompleteBifixDecoding}).
\end{itemize}

The paper is organized as follows.
In the first section, we introduce the definition of the extension graph and of an eventually dendric shift.
In Section~\ref{sectionComplexity}, we recall some mostly known properties on the complexity of a shift and of special words.
We prove a result which characterizes eventually dendric shifts by the extension properties of special words (Proposition~\ref{propositionSG}).
In Section~\ref{sectionAsymptotic}, we use the classical notion of asymptotic equivalence to give a second characterization of eventually dendric shifts (Proposition~\ref{propositionCaracterizationDendric}).
In Section~\ref{sectionSimpletrees}, we introduce the notion of a simple tree and we prove that for an eventually dendric shift, the extension graph of every long enough word is a simple tree (Proposition~\ref{propositionExt}), a property which holds trivially for every word in a Sturmian shift.
In Section~\ref{sectionConjugacy} we prove the first of our main results (Theorem~\ref{theoremConjugacy}).
In the next sections (Section~\ref{sectionProperty} to \ref{sectionBifixDecoding}), we prove additional properties of eventually dendric shifts.
We first prove that eventually dendric shifts are minimal as soon as they are irreducible (Theorem~\ref{theoremMinimalDendric}), a property already known for dendric shifts \cite{DolcePerrin2017}.
Next we introduce generalized extension graphs in which extension by words of of fixed length replace extension by letters.
We prove that one obtain an equivalent definition of eventually dendric shifts using these generalized extension graphs (Theorem~\ref{theoremGeneralized}).
Finally, we prove that the class of eventually dendric shifts is closed under complete bifix decoding, a result already known for dendric shifts.

\paragraph{Acknowledgements}
We thank Val\'erie Berth\'e, Paulina Cecchi, Fabien Durand and Samuel Petite for useful conversations on this subject and especially Fabien Durand which suggested to us the statement of Theorem~\ref{theoremConjugacy}.

\section{Eventually dendric shifts}
Let $A$ be a finite alphabet.
We consider the set$A^Z$ of bi-infinite words  on $A$ as a topological space for the product topology.
The \emph{shift map} $\sigma_A : A^\Z \to A^\Z$ is defined by $y=\sigma_A(x)$ if $y_i=x_{i+1}$ for every $i \in \Z$.
It is a one-to-one continuous map.

We also consider the topological space $A^\N$ of one-sided infinite words.
We still denote by $\sigma_A$ the map from $A^\N$ to $A^\N$ defined by $\sigma_A(x)=y$ if $y_i=x_{i+1}$ for all $i\in \N$.
Note that $\sigma_A$ is not one-to-one as soon as $\Card(A) \ge 2$.

A \emph{shift space} on the alphabet $A$ is a subset $X$ of the set $A^\Z$ which is closed and invariant under the shift, that is such that $\sigma_A(X)=X$ (for more on shift spaces see, for instance,~\cite{PytheasFogg2002}).

We denote by $\LL(X)$ the language of $X$, which is the set of finite factors of the elements of $X$.
A language $\LL$ on the alphabet $A$  is the language of a shift if and only if it is \emph{factorial} (that is contains the factors of its elements) and \emph{extendable} (that is for any $w \in \LL$ there are letters $a,b \in A$ such that $awb \in \LL$).

For $n \ge 0$ we denote
\begin{eqnarray*}
\LL_n(X) & = & \LL(X) \cap A^n \\
\LL_{\ge n}(X) & = & \cup_{m\ge n} \LL_m(X).
\end{eqnarray*}

For $w \in \LL(X)$ and $n \ge 1$, we denote
\begin{eqnarray*}
L_n(w,X) & = & \{ u \in \LL_n(X) \mid uw \in \LL(X) \} \\
R_n(w,X) & = & \{ v \in \LL_n(X) \mid wv \in \LL(X) \} \\
E_n(w,X) & = & \{ (u,v) \in L_n(w,X) \times R_n(w,X) \mid uwv \in \LL(X) \}
\end{eqnarray*}

The \emph{extension graph} of order $n$ of $w$, denoted $\E_n(w,X)$, is the undirected bipartite graph with set of vertices the disjoint union of $L_n(w,X)$ and $R_n(w,X)$ and with edges the elements of $E_n(w,X)$.

When the context is clear, we denote $L_n(w), R_n(w), E_n(w)$ and $\E_n(w)$ instead of $L_n(w,X), R_n(w,X), E_n(w,X)$ and $\E_n(w,X)$.

A path in an undirected graph is \emph{reduced} if it does not contain successive equal edges.
For any $w \in \LL(X)$, since any vertex of $L_n(w)$ is connected to at least one vertex of $R_n(w)$, the bipartite graph $\E_n(w)$ is a tree if and only if there is a unique reduced path between every pair of vertices of $L_n(w)$ (resp. $R_n(w)$).

The shift $X$ is said to be \emph{eventually dendric} with threshold $m \ge 0$ if $\E_1(w)$ is a tree for every word $w \in \LL_{\ge m}(X)$.
It is said to be \emph{dendric} if we can choose $m = 0$.

The languages of dendric shifts were introduced in~\cite{BertheDeFeliceDolceLeroyPerrinReutenauerRindone2015} under the name of tree sets.
An important example of dendric shifts is formed by \emph{episturmian shifts} (also called Arnoux-Rauzy shifts), which are by definition such that $\LL(X)$ is closed by reversal and such that for every $n$ there exists a unique $w_n \in \LL_n(X)$ such that $\Card(R_1(w_n)) = \Card(A)$ and such that for every $w \in \LL_n(X) \setminus \{ w_n \}$ one has $\Card(R_1(w)) = 1$ (see~\cite{BertheDeFeliceDolceLeroyPerrinReutenauerRindone2015}).

\begin{example}
\label{exampleFibo}
Let $X$ be the \emph{Fibonacci shift}, which is generated by the morphism $a \mapsto ab, b \mapsto a$.
It is well known that it is a Sturmian shift (see~\cite{PytheasFogg2002}).
The graph $\E_1(a)$ is shown in Figure~\ref{figureFibo} on the left.
The graph $\E_3(a)$ is shown on the right.

	\begin{figure}[hbt]
	\centering
	\tikzset{node/.style={rectangle,draw,rounded corners=1.2ex}}
	\begin{tikzpicture}
	\node[node](a1al) {$a$};
	\node[node](a1bl) [below= 0.3cm of a1al] {$b$};
	\node[node](a1ar) [right= 1.5cm of a1al] {$a$};
	\node[node](a1br) [below= 0.3cm of a1ar] {$b$};
	\path[draw,thick, shorten <=0 -1pt, shorten >=-1pt]
	(a1al) edge node {} (a1br)
	(a1bl) edge node {} (a1ar);
	\path[draw,thick, shorten <=0 pt, shorten >=-0pt]
	(a1bl) edge node {} (a1br);
	\node[node](a3abal) [above right= 0cm and 2cm of a1ar] {$aba$};
	\node[node](a3aabl) [below= 0.3cm of a3abal] {$aab$};
	\node[node](a3babl) [below= 0.3cm of a3aabl] {$bab$};
	\node[node](a3babr) [right= 1.5cm of a3abal] {$bab$};
	\node[node](a3baar) [below= 0.3cm of a3babr] {$baa$};
	\node[node](a3abar) [below= 0.3cm of a3baar] {$aba$};
	\path[draw,thick]
	(a3abal) edge node {} (a3babr)
	(a3aabl) edge node {} (a3baar)
	(a3babl) edge node {} (a3abar);
	\path[draw,thick, shorten <=0 -2pt, shorten >=-2pt]
	(a3abal) edge node {} (a3baar)
	(a3aabl) edge node {} (a3abar);
	\end{tikzpicture}

 \caption{The graphs $\E_1(a)$ and $\E_3(a)$.}
 \label{figureFibo}
\end{figure}
\end{example}

The tree sets of characteristic $c \ge 1$ introduced in~\cite{BertheDeFeliceDelecroixDolceLeroyPerrinRindone2017,DolcePerrin2017} give an example of eventually dendric shifts.

\begin{example}
\label{exampleJulien}
Let $X$ be the shift generated by the morphism $a \mapsto ab, b \mapsto cda, c \mapsto cd, d\mapsto abc$.
Its language is a tree set of characteristic $2$ (\cite[Example 4.2]{BertheDeFeliceDelecroixDolceLeroyPerrinRindone2017}) and it is actually a specular set.
The extension graph $\E_1(\varepsilon)$ is shown in Figure~\ref{figureSpecular}.

\begin{figure}[hbt]
	\centering
	\tikzset{node/.style={rectangle,draw,rounded corners=1.4ex}}
	\begin{tikzpicture}
	\node[node](eal) {$a$};
	\node[node](ebl) [below= 0.3cm of eal] {$b$};
	\node[node](ebr) [right= 1.5cm of eal] {$b$};
	\node[node](ecr) [below= 0.3cm of ebr] {$c$};
	\path[draw,thick]
	(eal) edge node {} (ebr)
	(ebl) edge node {} (ecr);
	\path[draw,thick, shorten <=0 -1.5pt, shorten >=-1.5pt]
	(eal) edge node {} (ecr);
	\node[node](ecl) [below= 0.3 of ebl] {$c$};
	\node[node](edl) [below= 0.3cm of ecl] {$d$};
	\node[node](edr) [right= 1.5cm of ecl] {$d$};
	\node[node](ear) [below= 0.3cm of edr] {$a$};
	\path[draw,thick]
	(ecl) edge node {} (edr)
	(edl) edge node {} (ear);
	\path[draw,thick, shorten <=0 -1.5pt, shorten >=-1.5pt]
	(ecl) edge node {} (ear);
	\end{tikzpicture}
	\caption{The extension graph $\E_1(\varepsilon)$.}
	\label{figureSpecular}
\end{figure}
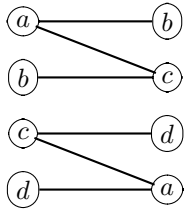

Since the extension graphs of all nonempty words are trees, the shift space is eventually dendric with threshold $1$.
\end{example}

\section{Complexity of shift spaces}
\label{sectionComplexity}
Let $X$ be a shift space.
For a word $w \in \LL(X)$ and $k\ge 1$, we denote
\begin{displaymath}
\ell_k(w) = \Card(L_k(w)), \
r_k(w) = \Card(R_k(w)), \
e_k(w) = \Card(E_k(w)).
\end{displaymath}
For any $w \in \LL(X)$, we have $1 \le \ell_k(w), r_k(w) \le e_k(w)$.
The word $w$ is \emph{left-$k$-special} if $\ell_k(w) > 1$, \emph{right-$k$-special} if $r_k(w) > 1$ and \emph{$k$-bispecial} if it is both left-$k$-special and right-$k$-special. For $k=1$, we use $\ell,e,r$ instead of $\ell_1,e_1,r_1$ and we simply say special instead of $k$-special.

We define the \emph{multiplicity} of $w$ as
\begin{displaymath}
m(w) = e(w)-\ell(w)-r(w)+1.
\end{displaymath}
We say that $w$ is \emph{strong} if $m(w) \ge 0$, \emph{weak} if $m(w) \le 0$ and \emph{neutral} if $m(w)=0$.

It is clear that
\begin{enumerate}
\item if $\E_1(w)$ is acyclic, then $w$ is weak,
\item if $\E_1(w)$ is connected, then $w$ is strong,
\item if $\E_1(w)$ is a tree, then $w$ is neutral.
\end{enumerate}

\begin{proposition}
\label{propositionNeutral}
Let $X$ be a shift space and let $w \in \LL(X)$.
If $w$ is neutral, then
\begin{equation}
\ell(w)-1 = \sum_{b \in R_1(w)}(\ell(wb)-1)
\label{eqgSum}
\end{equation}
\end{proposition}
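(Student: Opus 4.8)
The plan is to read off the right-hand side of \eqref{eqgSum} from the bipartite graph $\E_1(w)$ and then to invoke neutrality; no deep idea is needed. The one observation that makes everything work is that, for a fixed letter $b \in R_1(w)$, the number $\ell(wb)$ is exactly the degree of the vertex $b$ in the extension graph $\E_1(w)$.

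First I would establish the identity $L_1(wb) = \{a \in L_1(w) \mid (a,b) \in E_1(w)\}$ for every $b \in R_1(w)$. Indeed, $a \in L_1(wb)$ means $awb \in \LL(X)$; since $\LL(X)$ is factorial, this forces both $aw \in \LL(X)$ and $wb \in \LL(X)$, so that $a \in L_1(w)$ and $(a,b) \in E_1(w)$, while the reverse inclusion is immediate from the definition of $E_1(w)$. Consequently $\ell(wb)$ equals the number of edges of $\E_1(w)$ incident to the vertex $b$.

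The second step is a handshake count on the $R_1(w)$-side of the bipartite graph: summing the degrees of all vertices $b \in R_1(w)$ recovers the total number of edges, so $\sum_{b \in R_1(w)} \ell(wb) = e(w)$. Subtracting $r(w) = \Card(R_1(w))$ from both sides gives $\sum_{b \in R_1(w)}(\ell(wb)-1) = e(w) - r(w)$.

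Finally I would feed in the hypothesis that $w$ is neutral, that is $m(w) = e(w) - \ell(w) - r(w) + 1 = 0$, which rearranges to $e(w) - r(w) = \ell(w) - 1$. Combining this with the previous equality yields $\sum_{b \in R_1(w)}(\ell(wb)-1) = \ell(w) - 1$, which is precisely \eqref{eqgSum}. There is no genuine obstacle here: the whole statement is a one-line rearrangement once the edge-count is in place, and the only point deserving a moment's care is the factoriality argument identifying $\ell(wb)$ with the degree of $b$ in $\E_1(w)$.
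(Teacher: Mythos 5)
Your proof is correct and follows essentially the same route as the paper: the paper's proof writes $\sum_{b\in R_1(w)}(\ell(wb)-1) = e(w)-r(w)$ and then applies neutrality, exactly as you do. The only difference is that you spell out the edge-counting identity $\ell(wb)=\Card\{a\in L_1(w)\mid (a,b)\in E_1(w)\}$ that the paper leaves implicit, which is a sound and welcome clarification.
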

\begin{proof}
Since $w$ is neutral, we have $e(w) = \ell(w)+r(w)-1$.
Thus
\begin{eqnarray*}
\sum_{b\in R_1(w)}(\ell(wb)-1) & = & e(w)-r(w) \\
 & = & \ell(w)-1.
\end{eqnarray*}
\end{proof}
Note that the symmetrical of Proposition~\ref{propositionNeutral} also holds: if $w \in \LL(X)$ is neutral then
$$
r(w) - 1 = \sum_{b \in L_1(w)} \left( r(bw) - 1 \right).
$$

Set further
\begin{eqnarray*}
p_n(X) & = & \Card(\LL_n(X)), \\
s_n(X) & = & p_{n+1}(X)-p_n(X), \\
b_n(X) & = & s_{n+1}(X)-s_n(X).
\end{eqnarray*}
The sequence $p_n(X)$ is called the \emph{complexity} of the shift space $X$.

The following result is from \cite{Cassaigne1997} (see also~\cite[Lemma 2.12]{BertheDeFeliceDolceLeroyPerrinReutenauerRindone2015}).
We include a proof for convenience.

\begin{proposition}
\label{propositionCassaigne}
We have for all $n \ge 0$,
\begin{equation}
s_n(X) = \sum_{w \in \LL_n(X)}(\ell(w)-1) = \sum_{w \in \LL_n(X)}(r(w)-1)
\label{eqs_n}
\end{equation}
and
\begin{equation}
b_n(X) = \sum_{w \in \LL_n(X)} m(w).
\label{eqb_n}
\end{equation}
In particular, the number of left-special (resp. right-special) words of length $n$ is bounded by $s_n(X)$.
\end{proposition}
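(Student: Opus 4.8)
The plan is to prove the two summation formulas by telescoping the definitions through the basic identities relating $p_n$, $\ell$, $r$, and $e$, and then to extract the final bound as an immediate consequence.

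First I would establish \eqref{eqs_n}. The key observation is that $p_{n+1}(X)$ counts the words of length $n+1$ in $\LL(X)$, and every such word factors uniquely as $wb$ with $w \in \LL_n(X)$ and $b \in R_1(w)$. Hence $p_{n+1}(X) = \sum_{w \in \LL_n(X)} r(w)$. Subtracting $p_n(X) = \sum_{w \in \LL_n(X)} 1$ gives
\begin{displaymath}
s_n(X) = p_{n+1}(X) - p_n(X) = \sum_{w \in \LL_n(X)} (r(w) - 1).
\end{displaymath}
The symmetric computation, writing each length-$(n+1)$ word uniquely as $aw$ with $w \in \LL_n(X)$ and $a \in L_1(w)$, yields $p_{n+1}(X) = \sum_{w \in \LL_n(X)} \ell(w)$ and hence the equality with $\sum_{w \in \LL_n(X)}(\ell(w)-1)$. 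This settles \eqref{eqs_n}.

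Next I would prove \eqref{eqb_n} by a similar counting of length-$(n+2)$ words, now keeping track of two-letter extensions. Each word of length $n+2$ in $\LL(X)$ factors uniquely as $awb$ with $w \in \LL_n(X)$ and $(a,b) \in E_1(w)$, so $p_{n+2}(X) = \sum_{w \in \LL_n(X)} e(w)$. Combining this with the two expressions $p_{n+1}(X) = \sum_{w \in \LL_n(X)} r(w) = \sum_{w \in \LL_n(X)} \ell(w)$ and with $p_n(X) = \sum_{w \in \LL_n(X)} 1$, I compute
\begin{displaymath}
b_n(X) = p_{n+2}(X) - 2p_{n+1}(X) + p_n(X) = \sum_{w \in \LL_n(X)} \bigl(e(w) - \ell(w) - r(w) + 1\bigr) = \sum_{w \in \LL_n(X)} m(w),
\end{displaymath}
using $b_n = s_{n+1} - s_n = (p_{n+2}-p_{n+1}) - (p_{n+1}-p_n)$ and the definition of $m(w)$.

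Finally, the bound on the number of special words follows at once from \eqref{eqs_n}. Every term $r(w) - 1$ in the sum is nonnegative since $r(w) \ge 1$, and a word is right-$1$-special exactly when $r(w) - 1 \ge 1$. Hence the number of right-special words of length $n$ is at most $\sum_{w \in \LL_n(X)}(r(w)-1) = s_n(X)$, and symmetrically for left-special words. I do not expect any genuine obstacle here: the only point requiring care is the bookkeeping in the unique-factorization step, where one must check that distinct pairs $(w,b)$ (resp. $(a,w)$, resp. $(a,w,b)$) correspond to distinct words of the longer length and that every longer word arises this way, which is immediate from factoriality and the definitions of $L_1$, $R_1$, and $E_1$.
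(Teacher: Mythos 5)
Your proposal is correct and follows essentially the same route as the paper: both rest on the identities $\sum_{w \in \LL_n(X)} \ell(w) = p_{n+1}(X)$, $\sum_{w \in \LL_n(X)} r(w) = p_{n+1}(X)$ and $\sum_{w \in \LL_n(X)} e(w) = p_{n+2}(X)$, followed by the same telescoping computation. You merely make the unique-factorization bookkeeping and the final nonnegativity argument for the bound on special words explicit, which the paper leaves implicit.
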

\begin{proof}
We have
\begin{eqnarray*}
\sum_{w \in \LL_n(X)}(\ell(w)-1) & = & \sum_{w \in \LL_n(X)}\ell(w) - \Card(\LL_n(X)) \\
 & = & \Card(\LL_{n+1}(X)) - \Card(\LL_n(X)) = p_{n+1}-p_n \\
 & = & s_n(X)
\end{eqnarray*}
with the same result for $\sum_{w \in \LL_n(X)}(r(w)-1)$.
N
\begin{eqnarray*}
\sum_{w \in \LL_n(X)}m(w) & = & \sum_{w \in \LL_n(X)}(e(w)-\ell(w)-r(w)+1) \\
 & = & p_{n+2}(X)-2p_{n+1}(X)+p_n(X) = s_{n+1}(X)-s_n(X) = b_n(X).
\end{eqnarray*}
\end{proof}

We will use the following easy consequence of Proposition~\ref{propositionCassaigne}.
 
\begin{proposition}
\label{propositionComplexity}
Let $X$ be a shift space.
If $X$ is eventually dendric, then the sequence $s_n(X)$ is eventually constant.
\end{proposition}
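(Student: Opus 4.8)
The plan is to connect the eventually dendric hypothesis to the complexity sequence via the multiplicity function $m(w)$, using the two formulas from Proposition~\ref{propositionCassaigne}. The key observation is that if $\E_1(w)$ is a tree, then $w$ is neutral, i.e.\ $m(w)=0$. Since $X$ is eventually dendric with some threshold $m\ge 0$, the graph $\E_1(w)$ is a tree for every $w\in\LL_{\ge m}(X)$, so $m(w)=0$ for all words of length at least $m$.

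First I would fix a threshold $m$ such that $\E_1(w)$ is a tree for every $w\in\LL_{\ge m}(X)$, which exists by the definition of eventually dendric. Then for every $n\ge m$ and every $w\in\LL_n(X)$ we have $m(w)=0$ by observation~(3) preceding Proposition~\ref{propositionNeutral}. Plugging this into formula~\eqref{eqb_n} of Proposition~\ref{propositionCassaigne} gives
\begin{equation*}
b_n(X)=\sum_{w\in\LL_n(X)}m(w)=0\qquad\text{for all }n\ge m.
\end{equation*}

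Finally, I would recall that by definition $b_n(X)=s_{n+1}(X)-s_n(X)$. Hence $b_n(X)=0$ for all $n\ge m$ means exactly that $s_{n+1}(X)=s_n(X)$ for all $n\ge m$, so the sequence $s_n(X)$ is constant from index $m$ onward, i.e.\ eventually constant. This completes the argument.

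I do not expect any genuine obstacle here, since the statement is advertised as an ``easy consequence'' of Proposition~\ref{propositionCassaigne}. The only point requiring minor care is making explicit that ``tree implies neutral'' (already listed as item~(3) in the excerpt) applies uniformly to \emph{all} sufficiently long words, so that the entire sum defining $b_n(X)$ vanishes rather than just individual terms; this is immediate from the uniform threshold $m$ in the definition of eventually dendric.
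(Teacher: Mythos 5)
Your proof is correct and follows exactly the same route as the paper's: the paper also picks the threshold, invokes ``tree implies neutral'' together with formula~\eqref{eqb_n} of Proposition~\ref{propositionCassaigne} to get $b_n(X)=0$ beyond the threshold, and concludes that $s_n(X)$ stabilizes. Your write-up merely makes explicit the steps the paper leaves implicit.
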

\begin{proof}
Let $n \ge 1$ be such that the extension graph of every word in $\LL_{\ge n}(X)$ is a tree.
Then $b_m(X) = 0$ for every $m \ge n$.
Thus $s_m(X) = s_{m+1}(X)$ for every $m \ge n$, whence our conclusion.
\end{proof}

The converse of Proposition~\ref{propositionComplexity} is not true, as shown by the following example.

\begin{example}
Let $X$ be the \emph{Chacon ternary shift}, which is the substitutive shift space generated by the morphism $\varphi : a \mapsto aabc, b \mapsto bc, c\mapsto abc$.
It is well known that the complexity of $X$ is $p_n(X) = 2n+1$ and thus that $s_n = 2$ for all $n \ge 0$ (see~\cite[Section 5.5.2]{PytheasFogg2002}).
The extension graphs of $abc$ and $bca$ are shown in Figure~\ref{figureChacon}.

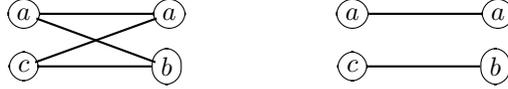
\begin{figure}[hbt]
\centering
	\tikzset{node/.style={rectangle,draw,rounded corners=1.4ex}}
	\begin{tikzpicture}
		\node[node](abcal) {$a$};
		\node[node](abccl) [below= 0.3cm of abcal] {$c$};
		\node[node](abcar) [right= 1.5cm of abcal] {$a$};
		\node[node](abcbr) [right= 1.5cm of abccl] {$b$};
		\path[draw,thick]
			(abcal) edge node {} (abcar)
			(abccl) edge node {} (abcbr);
		\path[draw,thick, shorten <=0 -1.5pt, shorten >=-1.5pt]
			(abcal) edge node {} (abcbr)
			(abccl) edge node {} (abcar);
		\node[node](bcaal) [right= 2 of abcar] {$a$};
		\node[node](bcacl) [below= 0.3cm of bcaal] {$c$};
		\node[node](bcaar) [right= 1.5cm of bcaal] {$a$};
		\node[node](bcabr) [right= 1.5cm of bcacl] {$b$};
		\path[draw,thick]
			(bcaal) edge node {} (bcaar)
			(bcacl) edge node {} (bcabr);
	\end{tikzpicture}

 \caption{The extension graphs of $abc$ and $bca$.}
 \label{figureChacon}
\end{figure}

Thus $m(abc) = 1$ and $m(bca) = -1$.
Let now $\alpha$ be the map on words defined by $\alpha(x) = abc \varphi(x)$.
Let us verify that if the extension graph of $x$ is the graph of Figure~\ref{figureChacon} on the left, the same holds for the extension graph of $y = \alpha(x)$.
Indeed, since $axa \in \LL(X)$, the word $\varphi(axa) = aabc \varphi(x) aabc = ayaabc$ is also in $\LL(X)$ and thus $(a,a) \in \E_1(y)$.
Since $cxa\in \LL(X)$ and since a letter $c$ is always preceded by a letter $b$, we have $bcxa \in \LL(X)$.
Thus $\varphi(bcxa) = bcyaabc \in \LL(X)$ and thus $(c,a) \in \E_1(y)$.
The proof of the other cases is similar.
The same property holds for a word $x$ with the extension graph on the right of Figure~\ref{figureChacon}.
This shows that there is an infinity of words whose extension graph is not a tree and thus the Chacon set is not eventually dendric.
\end{example}

Let $X$ be a shift space.
We define $LS_n(X)$ (resp. $LS_{\ge n}(X)$) as the set of left-special words of $\LL(X)$ of length $n$ (resp. at least $n$). We denote $LS(X)=\cup_{n\ge 1}LS_n(X)$.

The following result expresses the fact that eventually dendric shift spaces are characterized by an asymptotic property of left-special words which is a local version of the property defining Sturmian shift spaces.

\begin{proposition}
\label{propositionSG}
A shift space $X$ is eventually dendric if and only if there is an integer $n \ge 0$ such that any word $w$ of $LS_{\ge n}(X)$ has exactly one right extension $wb \in LS_{\ge n+1}(X)$ with $b \in A$ which is moreover such that $\ell(wb) = \ell(w)$.
\end{proposition}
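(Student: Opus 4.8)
The plan is to prove both implications by combining the complexity bookkeeping of Section~\ref{sectionComplexity} with one elementary monotonicity remark: since $awb\in\LL(X)$ forces $aw\in\LL(X)$, we have $L_1(wb)\subseteq L_1(w)$ and hence $\ell(wb)\le\ell(w)$ for every $b\in R_1(w)$; in particular the prefix of length $|w|$ of a left-special word of length $|w|+1$ is itself left-special. Thus deleting the last letter defines a map $\phi\colon LS_{m+1}(X)\to LS_m(X)$, $wb\mapsto w$, which is automatically injective, and I will control it from both sides. I also record the dictionary that, in $\E_1(w)$, the degree of a right vertex $b$ equals $\ell(wb)$, so that a right vertex of degree at least two is precisely a left-special right extension of $w$.

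For the direct implication, suppose $X$ is eventually dendric, so there is $m_0$ with $\E_1(w)$ a tree, hence $w$ neutral, for all $|w|\ge m_0$. For a left-special such $w$, Proposition~\ref{propositionNeutral} gives $\ell(w)-1=\sum_{b\in R_1(w)}(\ell(wb)-1)$ with nonnegative summands and left-hand side at least $1$, so some $wb$ is left-special; hence $\phi$ is onto $LS_m(X)$ and $\Card(LS_m(X))$ is non-decreasing for $m\ge m_0$. By Proposition~\ref{propositionComplexity} the sequence $s_m(X)$ is eventually constant, and since $\Card(LS_m(X))\le s_m(X)$ (Proposition~\ref{propositionCassaigne}) the integer sequence $\Card(LS_m(X))$ is bounded and non-decreasing, hence eventually constant. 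A surjection of finite sets of equal cardinality is a bijection, so for large $m$ each left-special $w$ has a unique left-special right extension $wb$; feeding uniqueness into the displayed identity kills every other term and leaves $\ell(w)-1=\ell(wb)-1$, i.e. $\ell(wb)=\ell(w)$. Taking $n$ to be this threshold gives the stated property.

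For the converse, fix the threshold $n$ supplied by the hypothesis. Now $\phi$ is a bijection $LS_m(X)\to LS_{m+1}(X)$ preserving $\ell$ for all $m\ge n$ (injectivity is automatic, surjectivity and $\ell$-preservation are exactly the hypothesis), so $s_m(X)=\sum_{w\in LS_m(X)}(\ell(w)-1)$ is constant and, by Proposition~\ref{propositionCassaigne}, $b_m(X)=\sum_{w\in\LL_m(X)}m(w)=0$ for $m\ge n$. The decisive step, and the one I expect to be the real obstacle, is to extract acyclicity of the extension graphs from the left-special hypothesis: I claim $\E_1(w)$ is acyclic for every $|w|\ge n$. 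Indeed a cycle in the bipartite graph $\E_1(w)$ alternates sides and so visits at least two distinct right vertices, each incident to two cycle edges and hence of degree at least two in $\E_1(w)$; by the dictionary above this produces at least two left-special right extensions of $w$, contradicting the hypothesis (and when $w$ is not left-special there is a single left vertex, so no cycle exists at all).

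Once acyclicity is in hand the rest is the available bookkeeping. By the first of the elementary facts, acyclicity gives $m(w)\le0$ for all $|w|\ge n$, and together with $\sum_{w\in\LL_m(X)}m(w)=0$ this forces $m(w)=0$ for every such $w$. Finally, a graph that is acyclic and has $m(w)=e(w)-\ell(w)-r(w)+1=0$ carries exactly $\ell(w)+r(w)-1$ edges on $\ell(w)+r(w)$ vertices, so being a forest with one edge fewer than vertices it is connected, hence a tree. Therefore $\E_1(w)$ is a tree for all $|w|\ge n$ and $X$ is eventually dendric with threshold $n$. The only genuinely non-mechanical point is the acyclicity claim; after the degree-versus-$\ell(wb)$ dictionary it reduces the uniqueness hypothesis to the impossibility of a cycle, and everything else is counting.
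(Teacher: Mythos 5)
Your proof is correct. The converse half coincides with the paper's own argument: at most one right vertex of $\E_1(w)$ can have degree $\ge 2$, so the graph is acyclic and $w$ is weak; the hypothesis gives $s_m(X)=s_{m+1}(X)$, hence $b_m(X)=0$, and Proposition~\ref{propositionCassaigne} then forces every word of length $\ge n$ to be neutral; an acyclic graph with $e(w)=\ell(w)+r(w)-1$ edges is a tree. The forward half has the same skeleton as the paper (existence of a left-special extension, then eventual uniqueness), but the uniqueness step is genuinely different. The paper deduces from Equation~\eqref{eqgSum} that a left-special word with two or more left-special extensions satisfies $\ell(wb)<\ell(w)$ for each of them, so at most $\Card(A)$ branchings can occur along any chain of left-special words ordered by prefix; converting this into a uniform threshold $n$ still implicitly requires the bound on the number of left-special words per length. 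Your argument makes that counting explicit and self-contained: surjectivity of the deletion map $LS_{m+1}(X)\to LS_m(X)$ (obtained from neutrality via Proposition~\ref{propositionNeutral}, where the paper instead invokes connectedness of $\E_1(w)$) makes $\Card(LS_m(X))$ non-decreasing, Propositions~\ref{propositionCassaigne} and~\ref{propositionComplexity} make it bounded, hence eventually constant, and a surjection between finite sets of equal cardinality is a bijection. This is arguably cleaner and more watertight than the paper's chain argument; what it loses is the quantitative information that branching happens at most $\Card(A)$ times along any chain of prefixes.

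Two wording slips, neither of which damages the argument. First, your preliminary claim that the deletion map $\phi\colon LS_{m+1}(X)\to LS_m(X)$, $wb\mapsto w$, is ``automatically injective'' is false: $wb$ and $wb'$ have the same image, and injectivity of $\phi$ is precisely the uniqueness assertion to be proved; your actual argument never uses this claim, since it correctly derives bijectivity from surjectivity plus equal finite cardinalities. Second, in the converse you call $\phi$ a bijection $LS_m(X)\to LS_{m+1}(X)$, but the map you use there is the extension map $w\mapsto wb$, i.e.\ the inverse of $\phi$; for that map injectivity is indeed automatic, and surjectivity follows from the hypothesis together with your remark that the length-$m$ prefix of a left-special word is left-special. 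With those labels straightened out, the proof stands.
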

\begin{proof}
Assume first that $X$ is eventually dendric with threshold $m$.
Then any word $w$ in $LS_{\ge m}(X)$ has at least one right extension in $LS(X)$.
Indeed, since $R_1(w)$ has at least two elements and since the graph $\E_1(w)$ is connected, there is at least one element of $R_1(w)$ which is connected by an edge to more than one element of $R_1(w)$.

Next, Equation~\eqref{eqgSum} shows that for any $w \in LS_{\ge m}(X)$ which has more than one right extension in $LS(X)$, one has $\ell(wb) < \ell(w)$ for each such extension. 
Thus the number of words in $LS_{\ge m}(X)$ prefix of one another which have more than one right extension is bounded by $\Card(A)$.
This proves that there exists an $n \ge m$ such that for any $w \in \LL_{\ge n}(X)$ there is exactly one $b \in A$ for which $wb \in LS(X)$. Moreover, one has then $\ell(wb) = \ell(w)$ by Equation~\eqref{eqgSum}.

Conversely, assume that the condition is satisfied for some integer $n$.
For any word $w$ in $\LL_{\ge n}(X)$, the graph $\E_1(w)$ is acyclic since all vertices in $R_1(w)$ except at most one have degree $1$.
Thus $w$ is weak.
Let $N$ be the length of $w$.
Then for every word $u$ of length $N$ and every $b \in R_1(u)$, 
one has $\ell(ub) = 1$ except for one letter $b$ 
such that $\ell(ub) = \ell(u)$.
Thus, by Proposition~\ref{propositionCassaigne},
\begin{displaymath}
s_N(X) = \sum_{u\in \LL_N(X)}(\ell(u)-1) = \sum_{v\in \LL_{N+1}(X)}(\ell(v)-1) = s_{N+1}(X).
\end{displaymath}
This shows that $b_N = 0$ for every $N \ge n$ and thus, by Proposition~\ref{propositionCassaigne} again, all words in $\LL_{\ge n}(X)$ are neutral.
Since all graphs $\E_1(w)$ are moreover acyclic, this forces that these graphs are trees and thus that $X$ is eventually dendric with threshold $n$.
\end{proof}

We give below an example of a shift space which is shown to be eventually dendric using Proposition~\ref{propositionSG}.

\begin{example}
\label{exampleImageTribo}
Let $X$ be the \emph{Tribonacci shift}, which is the episturmian shift space generated by the substitution $\varphi : a \mapsto ab, b \mapsto ac, c\mapsto a$ and let $\alpha$ be the morphism $\alpha : a \mapsto a, b \mapsto a, c \mapsto c$.
Let $\varphi^\omega(a)$ be the right infinite word having all $\varphi^n(a)$ for $n \ge 1$ as prefixes.
The left-special words for $X$ are the prefixes of $\varphi^\omega(a)$.
Indeed, it is easy to verify that if $w$ is left-special, then $\varphi(w)$ is also left-special.

Note that the set $\LL(X) \cap c \{a,b\}^* c$ is
\begin{displaymath}
\{ cabac, cabaabac, cababac \}.
\end{displaymath}
Since these three words are of distinct lengths, it follows that the restriction of $\alpha$ to the set $\LL(X) \cap c \{a,b\}^* c$ is injective.

Next we claim that the left-special words for $\alpha(X)$ containing a letter $c$ are the prefixes of $\alpha(\varphi^\omega(a))$ or $aa\alpha(\varphi^\omega(a))$ containing a letter $c$.
Indeed, if $w$ is a prefix of $\varphi^\omega(a)$, we have $\LL_1(w,X) = \{a,b,c\}$ and thus $\LL_1(\alpha(w),\alpha(X)) = \{a,c\}$ showing that $\alpha(w)$ is left-special.
Next, $\LL_3(w,X) = \{aba,bac,cab\}$ and thus $\LL_1(aa\alpha(w),\alpha(X)) = \{a,c\}$ showing t
hat $aa\alpha(w)$ is left-special.
Conversely, assume that $u$ is left-special for $\alpha(X)$ and contains a $c$.
Since $u$ is a prefix of a word ending with $c$, we may assume that $u$ ends with $c$.
Set $u = a^jcvc$ with $j \ge 0$.
By a previous remark, there is a unique word $s \in \LL(X)$ such that $csc \in \LL(X)$ and $\alpha(csc) = cvc$.
Since every word in $\LL(X)$ of length at least $7$ contains a $c$, we have $j \le 6$.
It is easy to verify by inspection of the possible left extensions of $c$ in $\LL(X)$ that $u$ is left-special only when $j=3$ or $j=5$ (see Figure~\ref{figureLeftExt}). 

\begin{figure}[hbt]
\centering
	\tikzset{title/.style={minimum size=0.5cm,inner sep=0pt}}
	\begin{tikzpicture}
		\node[title](caba) {$\cdots caba$};
		\node[title](cab) [below= 0.3cm of caba] {$\cdots cab$};
		\node[title](abc) [below= 0.3cm of cab] {$\cdots abc$};
		\node[title](abac) [right= 1.0cm of cab] {$abac$};
		\path[draw,thick]
			(caba) edge node {} (abac)
			(cab) edge node {} (abac)
			(abc) edge node {} (abac);
		\node[title](ca) [right= 3cm of caba] {$\cdots ca$};
		\node[title](aa) [right= 0.5cm of ca] {$aa$};
		\node[title](c) [below= 0.3cm of ca] {$\cdots c$};
		\node[title](aac) [below= 1cm of aa] {$\cdots aac$};
		\node[title](aaac) [right= 2.5cm of c] {$aaac$};
		\path[draw,thick]
			(ca) edge node {} (aa)
			(c) edge node {} (aa)
			(aa) edge node {} (aaac)
			(aac) edge node {} (aaac);
	\end{tikzpicture}

 \caption{The possible left extensions of $c$ in $\LL(X)$ and in $\alpha(\LL(X))$.}
 \label{figureLeftExt}
\end{figure}
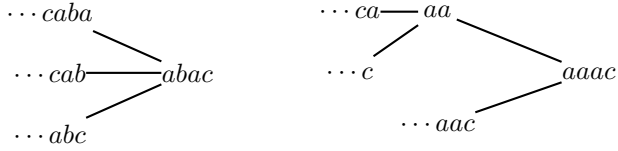

If $j=3$, then $u = \alpha(w)$ where $w = abacsc$ is left-special in $\LL(X)$ and thus is a prefix of $\varphi^\omega(a)$.
If $j=5$, then $u$ is the common image by $\alpha$ of $ababacsc$ and $baabacsc$.
Then $w=abacsc$ is left-special in $X$ and thus is a prefix of $\varphi^\omega(a)$.
Since $u = aa\alpha(w)$, the claim is proved.

It follows from the claim that the shift space $\alpha(X)$ satisfies the condition of Proposition~\ref{propositionSG} with $n=4$.
Thus we conclude that $\alpha(X)$ is dendric with threshold at most $4$.
The threshold is actually $4$ since $a^3$ has multiplicity $1$ in $\alpha(X)$.
\end{example}

\section{Asymptotic equivalence}
\label{sectionAsymptotic}

The \emph{orbit} of $x \in A^\Z$ is the equivalence class of $x$ under the action of the shift transformation.
Thus $y$ is in the orbit of $x$ if there is an $n \in \Z$ such that $x = \sigma_A^n(y)$.
We say that $x$ is a shift of $y$ if they belong to the same orbit.

For $x \in A^\Z$, denote
\begin{displaymath}
 x^- = \cdots x_{-2}x_{-1} \mbox{ and } x^+ = x_0x_1 \cdots
\end{displaymath} 
and $x = x^-\cdot x^+$.
When $X$ is a shift space, we denote $X^+$ the set of right infinite words $u$ such that $u=x^+$ for some $x \in X$.

A right infinite word $u \in A^\N$ is a \emph{tail} of the two-sided infinite word $x \in A^\Z$ if $u = y^+$ for some shift $y$ of $x$, that is $u = x_nx_{n+1} \cdots$ for some $n \in \Z$.

Let $X$ be a shift space on the alphabet $A$.
The \emph{right asymptotic equivalence} is the equivalence on $X$ defined as follows.
Two elements $x, y$ of $X$ are asymptotically equivalent if there exists two shifts $x',y'$ of $x,y$ such that $x'^+ = y'^+$.
In other words, $x,y$ are right asymptotic equivalent if they have a common tail (see Figure~\ref{figureAsymptoticFibo}).

\begin{figure}[hbt]
\centering
	\tikzset{title/.style={minimum size=-0cm,inner sep=0pt}}
	\tikzset{node/.style={circle,draw,minimum size=0.2cm,inner sep=0pt}}
	\begin{tikzpicture}
		\node[title](hh) {};
		\node[title](bb) [below= 1cm of hh] {};
		\node[title](h) [right= 1cm of hh] {};
		\node[title](hm) [below right= 0.4cm and 0.5cm of h] {};
		\node[title](b) [right= 1cm of bb] {};
		\node[title](bm) [above right= 0.4cm and 0.5cm of b] {};
		\node[title](d) [below right= 0.5cm and 1.5cm of hh] {};
		\node[node](dd) [right= 0.5cm of d] {};
		\node[title](dm) [right= 2cm of dd] {};
		\draw[line width=0.3mm, above, shorten >=-1pt] (hh) edge node {$x^-$} (h);
		\draw[line width=0.3mm, bend left] (h) edge node {} (d);
		\draw[line width=0.3mm, above, shorten >=-1pt] (bb) edge node {$y^-$} (b);
		\draw[line width=0.3mm, bend right] (b) edge node {} (d);
		\draw[line width=0.3mm] (d) edge node {} (dd);
		\draw[line width=0.3mm, above] (dd) edge node {$x^+ = y^+$} (dm);
		\end{tikzpicture}

 \caption{Two right asymptotic sequences $x,y$.}
 \label{figureAsymptoticFibo}
\end{figure}
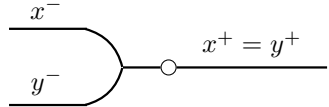

The classes of the asymptotic equivalence not reduced to one orbit are called \emph{right asymptotic classes} (they are called in~\cite{DonosoDurandMaassPetite2016} \emph{asymptotic components}).

\begin{example}
The Fibonacci shift $X$ has one right asymptotic class formed of the shifts of the two elements $x,y \in X$ such that $x^+ = y^+ = \varphi^\omega(a)$ where $\varphi^\omega(a)$ is the Fibonacci word, that is the right infinite word having all $\varphi^n(a)$ for $n \ge 1$ as prefixes.
Indeed, let $x, y \in X$ be such that $x^+ = y^+$ with $x \ne y$.
Then all finite prefixes of $x^+ = y^+$ are left-special and thus are prefixes of $\varphi^\omega(a)$ (see, for instance,~\cite{PytheasFogg2002}).
Thus $x^+ = y^+ = \varphi^\omega(a)$.
\end{example}

If $C$ is an asymptotic class, it is, by the definition of asymptotic equivalence, a union of orbits. 
The following result is proved in~\cite[Lemma 3.2]{DonosoDurandMaassPetite2016} under a weaker hypothesis that we shall not need here.
We give a proof for the sake of completeness.

\begin{proposition}
\label{propositionFiniteNbClasses}
Let $X$ be a shift space such that the sequence $s_n(X)$ is bounded by $k$.
Then the number of asymptotic classes is finite and at most equal to $k$.
\end{proposition}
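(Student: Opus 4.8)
The plan is to bound the number of asymptotic classes by the number of \emph{special} right-infinite words, and then to bound the latter by $k$ using the complexity estimate of Proposition~\ref{propositionCassaigne}. Here I call a right-infinite word $u = u_0u_1\cdots \in A^\N$ \emph{special} if every one of its prefixes $u_0\cdots u_{n-1}$ is a left-special word of $\LL(X)$. The whole argument rests on the slogan that an asymptotic class records a place where two orbits, identical on the right, branch to the left, and that such a branch forces all the prefixes of the common tail to be left-special.

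First I would attach a special right-infinite word to each asymptotic class. Let $C$ be a class not reduced to a single orbit, so there exist $x,y \in C$ lying in distinct orbits and sharing a common tail; after replacing $x,y$ by suitable shifts I may assume $x^+ = y^+$. Since $x$ and $y$ are not shifts of one another, the two-sided sequences differ somewhere, and as they agree on all nonnegative positions the set of positions where they disagree is a nonempty subset of the negative integers bounded above by $-1$; let $-p$ be its largest element. Then $u_C := x_{-p+1}x_{-p+2}\cdots$ is a tail that is preceded in $x$ by the letter $x_{-p}$ and in $y$ by $y_{-p}$, with $x_{-p}\neq y_{-p}$. Hence for every $n$ the word $u_0\cdots u_{n-1}$ has at least two left extensions in $\LL(X)$, so it is left-special and $u_C$ is special.

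Next I would verify that the assignment $C \mapsto u_C$ is injective. The key point is that two distinct classes cannot share a common tail: if a right-infinite word $u$ were a tail of some element of $C$ and of some element of $C'$, those two elements would be asymptotically equivalent, forcing $C = C'$. Consequently the words $u_C$ attached to distinct classes are pairwise distinct, and the number of classes is at most the number of special right-infinite words. To bound the latter, suppose there were $k+1$ of them, say $u^{(1)},\dots,u^{(k+1)}$; being pairwise distinct, there is a length $N$ at which their length-$N$ prefixes are already pairwise distinct. These are then $k+1$ distinct left-special words of length $N$, contradicting the bound $s_N(X)\le k$ on the number of left-special words of length $N$ furnished by Proposition~\ref{propositionCassaigne}. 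Hence there are at most $k$ special words, so at most $k$ asymptotic classes, and in particular finitely many.

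I expect the main obstacle to lie in the bookkeeping of the first two steps rather than in the final counting. One must produce, for each class, a tail that is genuinely special, which requires choosing the shifts so that the two orbits disagree \emph{immediately} to the left of their common tail (this is where one uses that the disagreement set has a largest element), and one must then justify the injectivity through the ``a shared tail forces equal classes'' observation. Once these are in place, the passage from the per-length bound $s_N(X)\le k$ to a global bound on the number of special infinite words is a routine compactness-type pigeonhole, and the conclusion follows.
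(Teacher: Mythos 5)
Your proof is correct and is essentially the paper's own argument: the paper likewise shifts each pair $x_i,y_i$ with $x_i^+=y_i^+$ so that $(x_i)_{-1}\neq(y_i)_{-1}$, notes that for $n$ large enough the length-$n$ prefixes of the $x_i^+$ are distinct left-special words, and concludes $\ell \le s_n(X) \le k$ via Proposition~\ref{propositionCassaigne}. The only difference is presentational: you make explicit the existence of the last disagreement position and the injectivity of $C \mapsto u_C$ (a shared tail forces equal classes), which the paper leaves implicit.
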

\begin{proof}
Let $(x_1,y_1), \ldots, (x_\ell,y_\ell)$ be $\ell$ pairs of distinct elements of $X$ belonging to asymptotic classes $C_1, \ldots, C_\ell$
such that for all 
$1 \le i \le \ell$, one has $x_i^+ = y_i^+$  and $(x_i)_{-1} \ne (y_i)_{-1}$.
For $n$ large enough the prefixes of length $n$ of the $x_i^+$ are $\ell$ distinct left-special words and thus $\ell \le s_n(X)$ since by Proposition~\ref{propositionCassaigne} the number of left-special words is bounded by $s_n(X)$.
This shows that the number of asymptotic classes is finite and bounded by $k$.
\end{proof}

Let $X$ be a shift space.
For an asymptotic class $C$ of $X$, we denote $\omega(C) = \Card(o(C))-1$ where $o(C)$ is the set of orbits contained in $C$.
For a right infinite word $u \in X^+$, let 
\begin{displaymath}
\ell_C(u) = \Card\{ a \in A \mid x^+ = au \mbox{ for some $x\in C$} \}.
\end{displaymath}
We denote by $LS_\omega(C)$ the set of right infinite words $u$ such that $\ell_C(u) \ge 2$.

The following statement can be seen as an infinite counterpart of Proposition~\ref{propositionCassaigne}.

\begin{proposition}\label{pro:eqomegaC}
Let $X$ be a  shift space and let $C$ be a right asymptotic class.
Then
\begin{equation}
\omega(C) = \sum_{u\in LS_\omega(C)}(\ell_C(u)-1)
\label{equationC}
\end{equation}
where both sides are simultaneously finite.
\end{proposition}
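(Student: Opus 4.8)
The plan is to encode $C$ as a functional graph on its positive tails and to read the two sides of \eqref{equationC} off its branching. Write $C^+=\{x^+ : x\in C\}$. Since $C$ is a union of orbits it is $\sigma_A$-invariant, so $C^+$ is stable under the one-sided shift, and for $u\in C^+$ the definition of $\ell_C$ unwinds to $\ell_C(u)=\Card\{a\in A : au\in C^+\}$, because $au=x^+$ for some $x\in C$ means exactly $au\in C^+$ (the letter $a$ being the recorded value $x_{-1}$). I form the graph $G$ on the vertex set $C^+$ with one edge $w\to\sigma_A(w)$ per vertex. Every vertex has out-degree one, the in-degree of $u$ is $\ell_C(u)$, and the vertices of in-degree at least two are precisely the elements of $LS_\omega(C)$. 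Each orbit of a point $x$ traces the bi-infinite directed path $n\mapsto(\sigma_A^n x)^+$, and such a path reconstructs a unique two-sided point, so orbits correspond to bi-infinite paths of $G$. As right asymptotic equivalence is transitive, any two points of $C$ share a tail, hence the forward halves of any two such paths eventually coincide: all paths have one common forward end.

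Assume first that this common tail is not eventually periodic, so that $G$ is a tree and no path meets a vertex twice. For $v\in C^+$ let $f(v)$ be the number of orbits whose path runs through $v$ and set $g(v)=f(v)-1$. Because out-degree is one, each path through $v$ arrives from a unique in-neighbour, so $f(v)=\sum_{w\to v}f(w)$, and since $v$ has $\ell_C(v)$ in-neighbours this becomes the local identity
\begin{displaymath}
g(v)=\sum_{w\to v}g(w)+\bigl(\ell_C(v)-1\bigr).
\end{displaymath}
Let $k=\Card(o(C))$ be the number of orbits. Fix a vertex $r$ on the common tail, far enough forward that every path has already reached it; then $f(r)=k$ and $g(r)=k-1$. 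Iterating the local identity backward from $r$ through $d$ levels expresses $g(r)$ as $\sum(\ell_C(v)-1)$ over the vertices within backward distance $d$ of $r$, plus a remainder $\sum g(v)$ over the vertices at distance exactly $d$.

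When $k$ is finite the paths separate at only finitely many vertices and agree past them, so beyond the deepest separation every distance-$d$ vertex carries a single path and the remainder vanishes as $d\to\infty$; this gives $\omega(C)=k-1=\sum_{v\in C^+}(\ell_C(v)-1)=\sum_{u\in LS_\omega(C)}(\ell_C(u)-1)$, which is \eqref{equationC}. The same count yields the simultaneous finiteness: separating $N$ distinct paths forces total branching $\sum(\ell_C(v)-1)\ge N-1$, so infinitely many orbits entail an infinite right-hand side and conversely.

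The step I expect to be the main obstacle is the eventually periodic case, in which the common forward tail is a genuine cycle of $G$. There a periodic orbit runs around the cycle and every incoming path revisits the cycle's vertices, so the conservation law $f(v)=\sum_{w\to v}f(w)$ overcounts and the telescoping above fails. I would instead decompose $G$ into this cycle together with the branch-subtrees hanging off its vertices: the orbits are then the single periodic orbit carried by the cycle plus the backward ends of the subtrees, and applying the finite-tree leaf identity inside each subtree (where $\ell_C$ coincides with the number of children) shows that a subtree with $M_j$ ends contributes $M_j-1$, while the cycle vertices contribute their total number of off-cycle branches; summing recovers $\sum(\ell_C(v)-1)=k-1$ exactly. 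Two minor points also deserve care: that the path--orbit correspondence is truly bijective, and that a reference vertex past all separation points exists, which is where the finiteness of $k$ enters.
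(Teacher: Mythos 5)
Your proof is correct and follows essentially the same route as the paper's: both encode the class as a directed graph built from the tails of points of $C$, in which orbits appear as leaves/ends, the branching vertices are exactly the elements of $LS_\omega(C)$ with in-degree $\ell_C(u)$, the aperiodic case is settled by a tree-branching count, and the periodic case is handled by the very same decomposition into a cycle with trees attached (what the paper calls a ``cluster of trees''). The only real difference is bookkeeping: the paper contracts everything to a finite ``father'' tree on the vertex set $o(C) \cup LS_\omega(C)$ and applies the finite-tree leaf identity directly, whereas you keep the infinite graph on all tails $x^+$, $x \in C$, and replace that identity by a telescoping conservation law with a vanishing remainder --- a difference of formalization rather than of substance.
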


In order to prove Proposition~\ref{pro:eqomegaC}, we use the notion of a \emph{cluster of trees} that we now define. 

A \emph{cluster of trees} is an oriented directed graph which is the union of a (non-trivial) cycle $\Gamma$ and a family of disjoint trees (oriented from child to father) $T_v$ with root $v$ indexed by the vertices $v$ on $\Gamma$ (see Figure~\ref{figureClusterTrees}).
It is easy to verify that a finite connected graph is a cluster of trees if and only if every vertex has outdegree $1$ and there is a unique strongly connected component.

\begin{figure}[hbt]
	\centering
	\tikzset{node/.style={circle,draw,minimum size=0.2cm,inner sep=0pt}}
	\begin{tikzpicture}
		\node[node](n) {};
		\node[node](e) [below right = 0.7cm and 0.7 of n] {};
		\node[node](s) [below left = 0.7cm and 0.7 of e] {};
		\node[node](w) [above left = 0.7cm and 0.7cm of s] {};
		\node[node](n1) [above = 0.5cm of n] {};
		\node[node](n2) [above left = 0.5cm and 0.5cm of n1] {};
		\node[node](n3) [above right = 0.5cm and 0.5cm of n1] {};
		\node[node](w1) [left = 0.5cm of w] {};
		\node[node](w2) [above left = 0.5cm and 0.5cm of w1] {};
		\node[node](w3) [below left = 0.5cm and 0.5cm of w1] {};
		\node[node](e1) [above right = 0.5cm and 0.5cm of e] {};
		\node[node](e2) [below right = 0.5cm and 0.5cm of e] {};
		\draw[line width=0.3mm, bend left, ->] (n) edge node{} (e);
		\draw[line width=0.3mm, bend left, ->] (e) edge node{} (s);
		\draw[line width=0.3mm, bend left, ->] (s) edge node{} (w);
		\draw[line width=0.3mm, bend left, ->] (w) edge node{} (n);
		\draw[line width=0.3mm, ->] (n2) edge node{} (n1);
		\draw[line width=0.3mm, ->] (n3) edge node{} (n1);
		\draw[line width=0.3mm, ->] (n1) edge node{} (n);	
		\draw[line width=0.3mm, ->] (w2) edge node{} (w1);
		\draw[line width=0.3mm, ->] (w3) edge node{} (w1);
		\draw[line width=0.3mm, ->] (w1) edge node{} (w);	
		\draw[line width=0.3mm, ->] (e1) edge node{} (e);
		\draw[line width=0.3mm, ->] (e2) edge node{} (e);
	\end{tikzpicture}

	\caption{A cluster of trees.}\label{figureClusterTrees}
\end{figure}
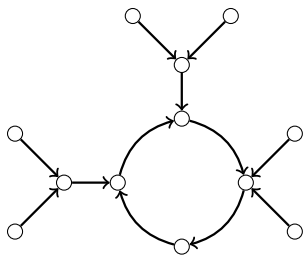

In a cluster of trees, the number of leaves (that is, the leaves of the trees $T_v$ not reduced to their root) is equal to $\sum_u ( d^-(u) - 1 )$, where $d^-$ stands for the indegree function and the sum runs over the set of internal nodes.
Indeed, this is true for one cycle alone since there are no leaves and every internal node has indegree 1.
The formula remains valid when suppressing a leaf in one of the trees not reduced to its root.

\begin{proofof}{ of Proposition~\ref{pro:eqomegaC}}
We assume that $\omega(C) \ge 1$ and we first suppose that $C$ does not contain periodic points which implies that $LS_\omega(C)$ does not contain periodic points either.
	
It is easy to verify that if $u,v \in LS_\omega(C)$, there exist $n,m \ge 0$ such that $\sigma^n(u) = \sigma^m(v)$.

We build a graph $T(C)$  as follows.
The set of vertices of $T(C)$ is $o(C) \cup LS_\omega(C)$.
There will be for each vertex $u$ of $T(C)$ at most one edge going out of $u$, called its father.

Let first $x\in C$ and let $u$ be the orbit of $x$.
There is, up to a shift of $x$, at least one $y \in C$ with $x \ne y$ such that $y^+ = x^+$.
Let $n \ge 0$ be the minimal integer such that $x_{-n} \ne y_{-n}$.
Then $v=\sigma^{-n+1}(x)^+$ is in $LS_\omega(C)$ and depends only on the orbit $u$ 
of $x$.
We choose the vertex $v$ as the father of $u$.

Next, for every $u\in LS_\omega(C)$, we consider  the minimal integer, if it exists, such that $v=\sigma^n(u)$ is in  $LS_\omega(C)$. 
Then we choose $v$ as the father of $u$.
	
Assume now that $\omega(C)$ is finite.
Then $LS_\omega(C)$ is also finite and $T(C)$ is a finite tree.
Indeed, if $u\in LS_\omega(C)$, there is at least one $x\in C$ such that $x^+=u$ and thus such
that $u$ is an ancester of the orbit of $x$.
By the claim made above, any two elements of $LS_\omega(C)$ have a common ancester.
Since $C$ does not contain periodic points, two vertices cannot be ancestors of one another. Thus there is a unique element of $LS_\omega(C)$ which has no father, namely the unique $u \in SL(C)$ with a maximal number of elements of $o(C)$ as descendants.
Since it is an ancestor of all vertices of $T(C)$, this shows that $T(C)$ is a finite tree.

Formula~\eqref{equationC} now follows from the fact that in any finite tree with $n$ leaves and and a set $V$ of internal vertices, one has $n-1 = \sum_{v\in V}( d^{-}(v)-1 )$.
	
Assume next that the right hand side of Equation~\eqref{equationC} is finite.
Then the set $LS_\omega(C)$ is finite and thus $T(C)$ is again a tree with a finite number of internal nodes.
Since the degree of each node is finite, it implies that it has also a finite number of leaves.
Thus $\omega(C)$ is finite and Equation~\eqref{equationC} also holds.
	
Finally, assume that $C$ contains a periodic point.
It follows from the definition of an asymptotic class that there is	exactly one such periodic orbit, since two periodic points having a common tail are in the same orbit. 
	
The proof follows the same lines as in the first case, but this time $T(C)$ will be a cluster of trees instead of a tree.

The set of leaves of $T(C)$ is, as above, the set $o(C)$ of non periodic orbits and the	the other vertices are the elements of $LS_\omega(C)$.
The unique father of a vertex is defined in the same way as above.
The fact that there is a unique strongly connected component is a consequence of the fact that there is a unique periodic orbit in $C$.
Finally, Formula~\eqref{equationC} holds with since the number of leaves is equal to $\sum ( d^{-}(u) - 1 ) -1$, where the sum runs over the set of internal nodes and the $-1$ corresponds to the unique periodic orbit.
\end{proofof}

\begin{example}
Consider again the image $\alpha(X)$ of the Tribonacci shift by the morphism $\alpha : a \mapsto a, b \mapsto a, c \mapsto c$ (Example~\ref{exampleImageTribo}).

\begin{figure}[hbt]
	\centering
	\tikzset{title/.style={minimum size=-0cm,inner sep=0pt}}
	\tikzset{small/.style={circle,draw,minimum size=0.2cm,inner sep=0pt}}
	\tikzset{node/.style={rectangle,draw,rounded corners=1.4ex}}
	\begin{tikzpicture}
		\node[title](hhh) {$x$};
		\node[title](hh) [right= 1.4cm of hhh] {};
		\node[title](bbh) [below= 0.4cm of hhh] {$y$};
		\node[title](bbhm) [right= 0.2cm of bbh] {};
		\node[title](bh) [right= 0.2cm of bbh] {};
		\node[title](bbb) [below= 1cm of bbh] {$z$};
		\node[title](bbbm) [right= 0.2cm of bbb] {};
		\node[title](bb) [above right= 0.4cm and 1cm of bbb] {};
		\node[small](bm) [right= 0.2cm of bb] {};
		\node[title](h) [above right= 0.6cm and 1cm of bb] {};
		\node[small](hm) [right= 0.2cm of h] {};
		\node[title](hf) [right= 1.8cm of hm] {};
		\draw[line width=0.3mm, shorten >=-1pt] (hhh) edge node {} (hh);
		\draw[line width=0.3mm, bend left] (hh) edge node {} (h);
		\draw[line width=0.3mm, shorten >=-1pt] (bbh) edge node {} (bbhm);
		\draw[line width=0.3mm, bend left] (bbhm) edge node {} (bb);
		\draw[line width=0.3mm, shorten >=-1pt] (bbb) edge node {} (bbbm);
		\draw[line width=0.3mm, bend right] (bbbm) edge node {} (bb);
		\draw[line width=0.3mm] (bb) edge node {} (bm);
		\draw[line width=0.3mm, bend right] (bm) edge node {} (h);
		\draw[line width=0.3mm] (h) edge node {} (hm);
		\draw[line width=0.3mm] (hm) edge node {} (hf);

		\node[node](y) [right = 2.8cm of hm] {$y$};
		\node[node](z) [below = 0.4cm of y] {$z$};
		\node[node](aax+) [above right = 0cm and 1cm of z] {$aax^+$};
		\node[node](x) [above = 0.6cm of aax+] {$x$};
		\node[node](x+) [above right= 0.2cm and 1cm of aax+] {$x^+$};
		\path[draw,thick, shorten >=-1pt]
			(y) edge node {} (aax+)
			(z) edge node {} (aax+)
			(x) edge node {} (x+);
		\path[draw,thick, shorten <=-2pt, shorten >=-1pt]
			(aax+) edge node {} (x+);
	\end{tikzpicture}

 \caption{The asymptotic class $C$ and the tree $T(C)$.}
 \label{figureAsymptoticClassesImageTribo}
\end{figure}
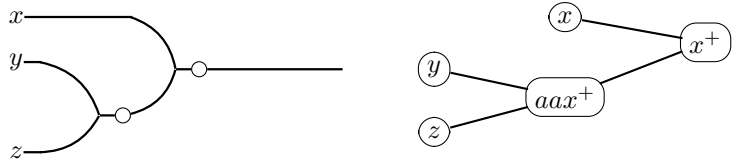

There is one asymptotic class $C$ made of three orbits represented in Figure~\ref{figureAsymptoticClassesImageTribo} on the left.
The class is formed of the orbits of $x,y,z$ where $x^+ = \alpha(\varphi^\omega(a))$ and $y^+ = z^+ = aax^+$.
The tree $T(C)$ is shown on the right.
\end{example}

In the next example we use the notation $u^\omega$ for the right infinite word $uuu\cdots$ and symmetrically $^\omega u$ for the left-infinite word $\cdots uuu$. 

\begin{example}
Let $X$ be the shift space which is the closure under the shift of the set
$\{^\omega c.(ab)^\omega \; \cup \; \,^\omega d.(ab)^\omega \; \cup \; ^\omega (ab)\cdot(ab)^\omega\}$.
The shift has just one right asymptotic class $C$, the one associated to the tail $(ab)^\omega$, containing three orbits.
Since the tail $(ab)^\omega$ can be prolonged on the left by either $c,d$ or $b$, we have that Formula~\ref{equationC} is verified. 
The cluster of trees $T(C)$ is represented in Figure~\ref{figureCluster}
where we denote by $xy$ the orbit of $x\cdot y$.

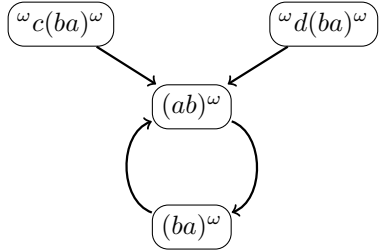
\begin{figure}[hbt]
		\centering
		\tikzset{node/.style={rectangle,draw,rounded corners=1.4ex}}
		\begin{tikzpicture}
			\node[node](ab) {$(ab)^\omega$};
			\node[node](ba) [below = 1cm of ab] {$(ba)^\omega$};
			\node[node](cab) [above left = 0.5cm and 0.5 of ab] {$^\omega c(ba)^\omega$};
			\node[node](dab) [above right = 0.5cm and 0.5cm of ab] {$^\omega d(ba)^\omega$};
			\draw[line width=0.3mm, bend left=70, ->] (ab) edge node{} (ba);
			\draw[line width=0.3mm, bend left=70, ->] (ba) edge node{} (ab);
			\draw[line width=0.3mm, ->] (cab) edge node{} (ab);
			\draw[line width=0.3mm, ->] (dab) edge node{} (ab);
		\end{tikzpicture}
		\caption{The cluster $T(C)$.}\label{figureCluster}
	\end{figure}
\end{example}

Let us now deduce from Proposition~\ref{propositionSG} a characterization of eventually dendric shift spaces in terms of asymptotic classes.
For a shift space $X$, denote
\begin{displaymath}
\omega(X) = \sum\omega(C)
\end{displaymath}
where the sum is over the asymptotic classes $C$ of $X$.

\begin{proposition}
\label{propositionCaracterizationDendric}
A  shift space $X$ is eventually dendric if and only if:
\begin{enumerate}
\item The sequence $s_n(X)$ is eventually constant, and
\item We have $\lim s_n(X)=\omega(X)$.
\end{enumerate}
\end{proposition}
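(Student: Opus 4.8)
The plan is to build one weighted tree of left-special words and to show that, for $n$ large, the increment $s_n(X)$ decomposes as $\omega(X)$ plus a nonnegative ``defect'' contributed by the left-special words that lie on no infinite branch; eventual dendricity will be exactly the vanishing of this defect. First I would record the local flow of the weight $\ell(w)-1$. Summing $\sum_{b\in R_1(w)}\ell(wb)=e(w)$ and subtracting $r(w)$ gives, for every $w\in\LL(X)$,
\[
\sum_{b\in R_1(w)}\bigl(\ell(wb)-1\bigr)=\bigl(\ell(w)-1\bigr)+m(w),
\]
so the weight is exactly conserved along neutral words, in accordance with Proposition~\ref{propositionNeutral} and \eqref{eqgSum}. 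Since $L_1(wb)\subseteq L_1(w)$, the quantity $\ell(w)$ is nonincreasing when $w$ grows on the right, so the left-special words form a tree $\mathcal T$ for the prefix order whose children of $w$ are its left-special right-extensions $wb$; its infinite branches are precisely the $u\in X^+$ all of whose prefixes are left-special. For such $u$, writing $u^{[n]}=u_0\cdots u_{n-1}$, the integer $\ell(u^{[n]})$ decreases to $\ell_\infty(u)=\Card\{a\in A\mid au\in X^+\}\ge 2$; let $B$ be the set of these branches.

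Next I would match $B$ with the left-special tails of the asymptotic classes. If $u\in B$ and $x^+=au$, $x'^+=a'u$ with $a\ne a'$, then $u$ is a common tail of $x,x'$, so all witnesses of $u$ lie in a single class $C_u$ and $\ell_\infty(u)=\ell_{C_u}(u)$; conversely every $u\in LS_\omega(C)$ lies in $B$ with $\ell_C(u)=\ell_\infty(u)$. Hence $B=\bigsqcup_C LS_\omega(C)$ and, summing Proposition~\ref{pro:eqomegaC} over the classes,
\[
\omega(X)=\sum_{C}\omega(C)=\sum_{u\in B}\bigl(\ell_\infty(u)-1\bigr),
\]
the periodic case being absorbed by the cluster-of-trees part of Proposition~\ref{pro:eqomegaC} without changing the count. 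Whenever $s_n(X)$ is bounded, $\Card(LS_n(X))\le s_n(X)$ is bounded, so $\mathcal T$ has boundedly many nodes per level and thus only finitely many infinite branches; these branches pairwise separate and all the finitely many values $\ell_\infty(u)$ are attained at some common level $N_0$. For $n\ge N_0$ the nodes of $\mathcal T$ on infinite branches are exactly the $u^{[n]}$, $u\in B$, each with $\ell(u^{[n]})=\ell_\infty(u)$, which yields
\[
s_n(X)=\sum_{w\in LS_n(X)}\bigl(\ell(w)-1\bigr)=\omega(X)+\!\!\sum_{\substack{w\in LS_n(X)\\ w\ \mathrm{off\ all\ branches}}}\!\!\bigl(\ell(w)-1\bigr),
\]
every remaining summand being $\ge 1$.

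Both implications are then short. If $X$ is eventually dendric with threshold $m$, then $s_n(X)$ is eventually constant by Proposition~\ref{propositionComplexity}, and for $n\ge m$ every left-special $w$ is neutral, so the flow identity forces it to have a child in $\mathcal T$; thus $\mathcal T$ has no leaf beyond level $m$, every such node lies on an infinite branch, the defect vanishes, and $\lim s_n(X)=\omega(X)$. Conversely, assume $(1)$ and $(2)$: then $\omega(X)$ is finite, $B$ is finite, and for $n$ large the decomposition forces the defect to vanish, i.e.\ each $w\in LS_{\ge N}(X)$ equals $u^{[|w|]}$ for a unique $u\in B$ with $\ell(w)=\ell_\infty(u)$. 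As the finitely many branches are already separated, $w$ has a single left-special right-extension $wb$ with $b=u_{|w|}$ and $\ell(wb)=\ell_\infty(u)=\ell(w)$, which is exactly the hypothesis of Proposition~\ref{propositionSG}, whence $X$ is eventually dendric. I expect the main obstacle to be the middle step: establishing the exact identity $\omega(X)=\sum_{u\in B}(\ell_\infty(u)-1)$ together with the finiteness and stabilization of $B$, since this is where the infinite asymptotic data of Proposition~\ref{pro:eqomegaC} must be matched term by term with the finite data $s_n(X)$.
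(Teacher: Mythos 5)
Your proof is correct and follows essentially the same route as the paper's: both match $\bigsqcup_C LS_\omega(C)$ with the right-infinite words all of whose prefixes are left-special (your set $B$), evaluate the contribution of their length-$n$ prefixes to $s_n(X)$ via Proposition~\ref{propositionCassaigne} and Proposition~\ref{pro:eqomegaC}, and reduce eventual dendricity to the unique right-extension criterion of Proposition~\ref{propositionSG}. The only real difference is organization: your decomposition $s_n(X)=\omega(X)+(\text{defect})$, valid whenever $s_n(X)$ is bounded, treats both implications uniformly (the paper instead obtains the on-branch identification from Proposition~\ref{propositionSG} in the forward direction and from separation of prefixes in the converse), and it makes explicit the counting that the paper's rather terse converse leaves implicit.
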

\begin{proof}
Assume first that $X$ is eventually dendric. 
Then assertion 1 holds by Proposition~\ref{propositionComplexity}.
To prove assertion 2, consider an integer $n$ large enough so that the condition of Proposition~\ref{propositionSG} holds (it implies that $s_m(X)$ is constant for $m \ge n$).
Let us consider an asymptotic class $C$.  

Let $\pi$ be the map assigning to $u \in A^\N$  its prefix of length $n$.
Then $\pi$ maps $LS_\omega(C)$ into $LS_n(X)$.
The map $\pi$ is injective since otherwise some word in $LS_\ge n(X)$ would have more than one extension on the right, contrary to Proposition~\ref{propositionSG}.
Next the sets $\pi(LS_\omega(C))$ for all asymptotic classes $C$ form a partition of $LS_n(X)$. 

Thus, by Equation~\eqref{eqs_n},
\begin{eqnarray*}
s_n(X) & = & \sum_{w \in LS_n(X)}(\ell(w)-1) = \sum_{C} \sum_{u\in LS_\omega(C)}(\ell_C(u)-1) \\
 & = &  \sum_C\omega(C)
\end{eqnarray*}
where the last equality follows from Equation~\eqref{equationC}.

Conversely, if the two conditions are satisfied, let $n$ be large enough so that $s_m(X) = s_n(X)$ for all $m \ge n$.
We may also assume that $n$ is large enough so that the prefixes of length $n$ of the words of $LS_\omega(C)$ for every asymptotic class $C$ are distinct.
Then, every word $w$ of $LS_n(X)$ has exactly one right extension $wb$ in $LS_{n+1}(X)$.
It is moreover such that $\ell(w) = \ell(wb)$ and thus $X$ is eventually dendric by Proposition~\ref{propositionSG}.
\end{proof}

\section{Simple trees}
\label{sectionSimpletrees}
The \emph{diameter} of a tree is the maximal length of simple paths.
A tree is \emph{simple} if its diameter is at most $3$.
Note that if the simple tree is the extension graph $\E_n(w)$ in some shift space $X$ of a bispecial word $w$, then the diameter of $\E_n(w)$ is equal to $3$ and this happens if and only if any two vertices of $\E_n(w)$ on the same side (that is, both in $L_n(x)$ or both in $R_n(w)$) are connected to a common vertex on the opposite side.

For example, if $X$ is the Fibonacci shift, then $\E_1(a)$ is simple while $\E_3(a)$ is not (see Example~\ref{exampleFibo}).

We prove the following additional property of the graphs $\E_k(w)$.

\begin{proposition}
\label{propositionExt}
Let $X$ be an eventually dendric shift space.
For any $k \ge 1$ there exists an $n \ge 1$ such that $\E_k(w)$ is a simple tree for every $w \in \LL_{\ge n}(X)$.
\end{proposition}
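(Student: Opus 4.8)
The plan is to reduce everything to the behaviour of left- and right-$k$-special words and to mirror the method of Proposition~\ref{propositionSG}. Throughout I write $m_k(w)=e_k(w)-\ell_k(w)-r_k(w)+1$, so that acyclicity of $\E_k(w)$ gives $m_k(w)\le 0$, connectedness gives $m_k(w)\ge 0$, and a tree gives $m_k(w)=0$; conversely, since every vertex of $\E_k(w)$ has degree at least $1$ (by extendability), acyclicity together with $m_k(w)=0$ forces $\E_k(w)$ to be a tree.

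First I would record a counting identity. Summing the obvious bijections $z=uwv\mapsto(u,w,v)$, $uw\mapsto(u,w)$, etc.\ over $w\in\LL_n(X)$ gives $\sum_{w\in\LL_n}e_k(w)=p_{n+2k}$ and $\sum_{w\in\LL_n}\ell_k(w)=\sum_{w\in\LL_n}r_k(w)=p_{n+k}$, whence $\sum_{w\in\LL_n}m_k(w)=p_{n+2k}-2p_{n+k}+p_n$. By Proposition~\ref{propositionComplexity} the sequence $s_n(X)$ is eventually equal to a constant $c$, so for $n$ large both $p_{n+2k}-p_{n+k}$ and $p_{n+k}-p_n$ equal $kc$, and therefore $\sum_{w\in\LL_n}m_k(w)=0$.

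The heart of the argument is the following claim together with its left--right mirror: for $w$ long enough there is at most one $v\in R_k(w)$ with $wv$ left-$k$-special, and at most one $u\in L_k(w)$ with $uw$ right-$k$-special. I expect this to be the main obstacle. To prove it I would use Proposition~\ref{propositionSG}, which shows that for $N$ large every left-special word of length $N$ has a unique left-special one-letter right extension; hence the left-special words of length $\ge N$ are exactly the prefixes of finitely many right-infinite words $\xi^1,\dots,\xi^q$ (their number being bounded via Proposition~\ref{propositionCassaigne}). Unwinding the definitions, $wv$ is left-$k$-special if and only if some $twv$ with $|t|\le k-1$ is left-special, i.e.\ if and only if $wv$ is a prefix of one of the finitely many shifted words $\sigma^{j}(\xi^{i})$ with $0\le j\le k-1$. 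These finitely many infinite words fall into finitely many equality classes, and any two distinct ones differ within a bounded prefix length; so once $|w|$ exceeds that bound, $w$ is a prefix of at most one of them, which pins down $v$ uniquely when it exists. The mirror statement follows from the symmetric form of Proposition~\ref{propositionSG} noted after Proposition~\ref{propositionNeutral}.

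Finally I would assemble the pieces. A cycle in the bipartite graph $\E_k(w)$ would use two vertices of degree $\ge 2$ on each side; a right vertex $v$ has degree $\ell_k(wv)$ and a left vertex $u$ has degree $r_k(uw)$, so the claim says that for $w$ long there is at most one vertex of degree $\ge 2$ on each side. Hence $\E_k(w)$ is acyclic and $m_k(w)\le 0$; combined with $\sum_{w\in\LL_n}m_k(w)=0$, every $m_k(w)$ vanishes for $n$ large, and acyclicity plus $m_k(w)=0$ makes $\E_k(w)$ a tree. This tree has at most one internal (degree $\ge 2$) vertex on each side, hence at most two internal vertices, which are then necessarily adjacent and on opposite sides; such a tree has diameter at most $3$ and is therefore a simple tree. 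Taking $n$ to be the maximum of the finitely many thresholds produced above yields the statement.
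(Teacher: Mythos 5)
Your proof is correct, but it follows a genuinely different route from the paper's. The paper first proves a dedicated lemma (Lemma~\ref{lemmapw}), via the asymptotic-class machinery of Section~\ref{sectionAsymptotic}, asserting that for $|w|$ large, nested left-special words $w$ and $pw$ (with $|p|\le k$) have a unique common left-special one-letter right extension preserving $\ell(\cdot)$; it then shows by induction on $\ell=1,\dots,k$, applying Proposition~\ref{propositionSG} and Lemma~\ref{lemmapw} iteratively, that any two vertices $p,q\in L_\ell(w)$ admit a common $r\in R_k(w)$ with $pwr,qwr\in\LL(X)$, and reads the simple-tree structure off this common-neighbor property and its mirror. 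You instead derive from Propositions~\ref{propositionSG} and~\ref{propositionCassaigne} that all sufficiently long left-special words are prefixes of finitely many right-infinite words, deduce a degree bound (at most one vertex of degree at least $2$ on each side of $\E_k(w)$ when $|w|$ is large), and then split tree-ness into acyclicity (from the degree bound) and connectedness (from the identity $\sum_{w\in\LL_n(X)}m_k(w)=p_{n+2k}-2p_{n+k}+p_n$, which vanishes for $n$ large by Proposition~\ref{propositionComplexity}), the diameter bound coming once more from the degree bound. Both arguments rest on the same structural fact --- long left-special words line up along finitely many infinite words --- but your packaging stays entirely within Section~\ref{sectionComplexity} (no asymptotic classes, no Lemma~\ref{lemmapw}) and treats acyclicity explicitly, a point the paper's induction leaves implicit: the common-neighbor property alone does not exclude cycles (the complete bipartite graph with two vertices on each side satisfies it), and full tree-ness of the graphs $\E_k(w)$ only appears later in the paper via Theorem~\ref{theoremGeneralized}. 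What the paper's route buys in exchange is a constructive output, namely explicit common right-extensions $r$, which is the form in which the proposition gets used afterwards. One attribution to correct in your write-up: the remark following Proposition~\ref{propositionNeutral} is the symmetric form of the neutrality identity, not of Proposition~\ref{propositionSG}; for your mirror claim you should instead observe that reversal exchanges left and right vertices in every extension graph, so the eventually dendric property is reversal-invariant and Proposition~\ref{propositionSG} can be applied to the reversed shift.
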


We first prove the following lemma.

\begin{lemma}
\label{lemmapw}
Let $X$ be an eventually dendric shift space.
For every $k \ge 1$ there is an $n \ge 1$ such that if $p,w \in \LL(X)$ with $|p| \le k$ and $|w| \ge n$ are such that $pw,w \in LS(X)$, then $pw$, $w$ have a unique right extension in $LS(X)$ for some letter $b\in A$ which is moreover such that $\ell(pwb) = \ell(pw)$ and $\ell(wb) = \ell(w)$.
\end{lemma}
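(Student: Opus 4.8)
The plan is to reduce the lemma to a single statement about the ``special continuation'' of long left-special words, and then to exploit the fact that sufficiently long left-special words organize into \emph{finitely many} infinite left-special rays. First I would fix the threshold $n_0$ given by Proposition~\ref{propositionSG}: every $w \in LS_{\ge n_0}(X)$ has a unique letter $b$ with $wb \in LS(X)$, and that letter satisfies $\ell(wb) = \ell(w)$. Writing $f(w) = wb$ for this unique special extension, one sees that $f$ maps $LS_n(X)$ into $LS_{n+1}(X)$ for $n \ge n_0$ and is injective there, since $w$ is the length-$n$ prefix of $f(w)$. Hence $\Card(LS_n(X))$ is non-decreasing for $n \ge n_0$; being bounded (it is at most $s_n(X)$ by Proposition~\ref{propositionCassaigne}, and $s_n(X)$ is eventually constant by Proposition~\ref{propositionComplexity}), it is eventually constant, so $f$ is eventually a bijection between consecutive levels. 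Iterating $f$ from any $w$ then yields an infinite chain $w, f(w), f^2(w), \dots$, that is a right-infinite word $u(w) = \lim_k f^k(w)$ having $w$ as a prefix and all of whose long prefixes are left-special; and there are only finitely many such rays $u^{(1)}, \dots, u^{(N)}$. This is precisely the picture underlying Proposition~\ref{propositionCaracterizationDendric}, the rays being the finitely many elements of $\bigcup_C LS_\omega(C)$.

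Next I would observe that the lemma reduces to showing that, for $w$ long enough, the special letters of $w$ and of $pw$ coincide: if $b,b'$ are the letters with $wb, pwb' \in LS(X)$, it suffices to prove $b = b'$, since then $b$ is a common special right extension and the valence identities $\ell(wb) = \ell(w)$ and $\ell(pwb) = \ell(pw)$ are exactly those furnished by Proposition~\ref{propositionSG}. Suppose, for contradiction, that for some fixed $k$ the conclusion fails for arbitrarily long $w$; pick a sequence $|w_i| \to \infty$ with $|p_i| \le k$, $w_i, p_i w_i \in LS(X)$ and $b_i \ne b_i'$. As there are only finitely many admissible words $p$ (bounded length over a finite alphabet) and finitely many rays, I may pass to a subsequence along which $p_i = p$ is fixed (say $|p| = m \le k$), $w_i$ is a prefix of a fixed ray $u^{(\alpha)}$, and $p w_i$ is a prefix of a fixed ray $u^{(\beta)}$.

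The contradiction is then a short limit argument. Comparing $p w_i$ with $w_i$ shows $u^{(\alpha)}_j = u^{(\beta)}_{m+j}$ for $0 \le j < |w_i|$, and since $|w_i| \to \infty$ this forces $u^{(\alpha)} = \sigma^m(u^{(\beta)})$. But $f$ advances one step along each ray, so $b_i = u^{(\alpha)}_{|w_i|}$ while $b_i' = u^{(\beta)}_{|p w_i|} = u^{(\beta)}_{m + |w_i|}$; the equality $u^{(\alpha)} = \sigma^m(u^{(\beta)})$ then gives $b_i = b_i'$, contradicting $b_i \ne b_i'$. Therefore $b = b'$ for all long enough $w$, and the lemma follows.

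I expect the main obstacle to be making the ray structure rigorous: that long left-special words fall into finitely many infinite chains under $f$, and that each chain is captured by a single right-infinite word with $w$ as a prefix. This is what allows the pigeonhole over the pair $(u^{(\alpha)}, u^{(\beta)})$ to run; once it is in place, the identity $u^{(\alpha)} = \sigma^m(u^{(\beta)})$ and hence $b = b'$ come out formally, breaking the apparent circularity of trying to deduce $b = b'$ directly from $w$ and $pw$ being left-special.
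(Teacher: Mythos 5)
Your proposal is correct, and its skeleton coincides with the paper's: both reduce the lemma, via Proposition~\ref{propositionSG}, to showing that the unique left-special extension letters of $w$ and of $pw$ agree, and both get this by exhibiting $w$ and $pw$ as prefixes of two members $u,v$ of a \emph{finite} family of right-infinite words all of whose prefixes are left-special, which are then forced to satisfy $v=pu$, i.e.\ $u=\sigma^{|p|}(v)$, so that the two extension letters are the same letter of the common ray. Where you genuinely differ is in how that finite family is produced and its finiteness certified. The paper imports it from Section~\ref{sectionAsymptotic}: the rays are the elements of $LS_\omega(C)$ over the asymptotic classes $C$, finiteness coming from Proposition~\ref{propositionFiniteNbClasses} together with the finiteness of each $LS_\omega(C)$ (via Propositions~\ref{pro:eqomegaC} and~\ref{propositionCaracterizationDendric}), and the threshold $n$ is chosen explicitly so that long prefixes separate rays and so that ``$w$ prefix of $u$, $pw$ prefix of $v$'' forces $v=pu$. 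You instead build the rays intrinsically by iterating the unique-extension map $f$ of Proposition~\ref{propositionSG}, and get finiteness from the finiteness of a single level $LS_{n_1}(X)$; in fact your detour through the eventual constancy of $\Card(LS_n(X))$ and the eventual bijectivity of $f$ is dispensable, since every prefix of a left-special word is left-special, so every $w\in LS_{\ge n_0}(X)$ lies on the chain generated by its prefix of length $n_0$, giving at most $\Card(LS_{n_0}(X))$ rays. Your subsequence/pigeonhole contradiction then replaces the paper's explicit choice of thresholds; the two devices are interchangeable. What your route buys is self-containedness --- it needs only Propositions~\ref{propositionCassaigne}, \ref{propositionComplexity} and~\ref{propositionSG}, and it makes explicit a step the paper leaves implicit, namely that every sufficiently long left-special word is a prefix of one of the rays. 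What the paper's route buys is coherence with its asymptotic-class viewpoint: your rays are exactly the elements of $\bigcup_C LS_\omega(C)$, as you remark, but your argument never needs this identification.
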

\begin{proof}
Consider two asymptotic classes $C,D$ and let $u \in LS_\omega(C)$, $v \in LS_\omega(D)$.
If $C,D$ are distinct, we cannot have $pu = v$ for some word $p$.
Thus there is an integer $n$ such that if $w$ is the prefix of length $n$ of $u$, then $pw$ is not a prefix of $v$.
Since there is a finite number of words $p$ of length at most $k$, a finite number of asymptotic classes (by Proposition~\ref{propositionFiniteNbClasses}) and since for each such class the set $LS_\omega(C)$ is finite, we infer that for every $k$ there exists an $n$ such that for every pair of asymptotic classes $C,D$ and any $u \in S(C),v\in LS(D)$, if $w$ is a prefix of $u$ and $pw$ a prefix of $v$, with $|p| \le k$ and $|w| = n$, then $C = D$.

Next, assume that $w$ is a prefix of $u$ and $pw$ a prefix of $v$ with $u,v \in S(C)$ for some asymptotic class $C$.
If $v \ne pu$, then there is a right extension $w'$ of $w$ such that $pw'$ is not a prefix of $v$.
By contraposition, if $n$ is large enough, we have $v = pu$. 

We thus choose $n$ large enough so that:
\begin{enumerate}
\item All elements of $LS_\omega(C)$ for all asymptotic components $C$ have distinct prefixes of length $n$;
\item For every pair of asymptotic classes $C,D$ and any $u \in LS_\omega(C),v\in LS(D)$, if $w$ is prefix of $u$ and $pw$ is prefix of $v$ with $|p| \le k$ and $|w| = n$ then $C = D$ and $pu = v$.
\end{enumerate}
We moreover assume that $n$ is large enough so that the condition of Proposition~\ref{propositionSG} holds.

Consider $p,w$ with $|p|=k$ and $|w|=n$ such that $pw,w$ are left-special.
By condition 1, there are asymptotic components $C,D$ and elements $u \in LS_\omega(C)$ and $v \in LS(D)$ such that $w$ is a prefix of $u$ and $pw$ a prefix of $v$.
Because of condition 2, we must have $\sigma^k(v)=u$ (and in particular $C=D$).
Thus there is a unique letter $b \in A$ such that $wb,pwb \in LS(X)$ which is moreover such that $\ell(wb)=\ell(w)$ and $\ell(pwb)=\ell(pw)$ by  Proposition~\ref{propositionSG}.
\end{proof}

\begin{proofof}{of Proposition~\ref{propositionExt}}
We choose $n$ such that Proposition~\ref{propositionSG} and Lemma~\ref{lemmapw} hold.

We prove by induction on $\ell$ with $1 \le \ell \le k$ that 
$\E_\ell(w)$ is a simple tree
and thus that for any $p,q \in L_\ell(w)$ there is an $r \in R_k(w)$ such that $pwr,qwr \in \LL(X)$.

The property is true for $\ell=1$.
Indeed, set $p=a$ and $q=b$.
Apply iteratively Proposition~\ref{propositionSG} to obtain letters $c_1, \ldots, c_k$ such that $\ell(w c_1 \cdots c_i) = \ell(w c_1 \cdots c_ic_{i+1})$ and set $r = c_1 \cdots c_k$.
Then $awr,bwr \in \LL(X)$.

Assume next that the property is true for $\ell-1$ and consider $ap,bq \in L_\ell(w)$ with $a,b \in A$.
Replacing if necessary $w$ by some longer word, we may assume that $p,q$ end with different letters and thus that $w$ is left-special.
By the induction hypothesis, there is a word $r \in R_k(w)$ such that $pwr,qwr \in \LL(X)$.
By Lemma~\ref{lemmapw}, the first letter of $r$ is the unique letter $c$ such that $\ell(pwc) = \ell(pw)$ and $\ell(qwc) = \ell(qw)$.
Thus $apwc,bqwc \in \LL(X)$.
Applying Lemma~\ref{lemmapw} iteratively in this way, we obtain that $apwr,bqwr \in \LL(X)$.
\end{proofof}

\section{Conjugacy}
\label{sectionConjugacy}
Let $A,B$ be two alphabets, and $X \subset A^\Z$ and $Y \subset B^Z$ be two shift spaces.
A map $\phi: X \to Y$ is called a \emph{sliding block code} if there exists $m,n \in \N$ and a map $f : \LL_{m+n+1}(X)\to B$ such that $\phi(x)_i = f(x_{i-m} \cdots x_{i+n})$ for all $i \in \Z$ and $x=(x_i) \in X$.
It can be shown that a map $\phi : X \to Y$ is a sliding block code if and only if it is continuous and commutes with the shift, that is $\phi \circ \sigma_A = \sigma_B \circ \phi$ (see, for instance,~\cite{LindMarcus1995}).

Two shift spaces $X,Y$ are said to be \emph{conjugate} when there is a bijective sliding block code $\phi : X \to Y$.
The following result shows that the property of being eventually dendric is a dynamical property.

\begin{theorem}
\label{theoremConjugacy}
The class of eventually dendric shift spaces is closed under conjugacy.
\end{theorem}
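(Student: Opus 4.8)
The plan is to exploit the structural characterizations of eventual dendricity established earlier rather than manipulating extension graphs directly. The cleanest route uses Proposition~\ref{propositionCaracterizationDendric}: a shift $X$ is eventually dendric if and only if $s_n(X)$ is eventually constant and $\lim s_n(X) = \omega(X)$. Both of these quantities are \emph{dynamical invariants}, so if I can show that each is preserved under conjugacy, the theorem follows immediately. So suppose $\phi : X \to Y$ is a conjugacy between shift spaces $X \subset A^\Z$ and $Y \subset B^\Z$, and assume $X$ is eventually dendric; I must show $Y$ is eventually dendric.

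First I would handle the complexity condition. Conjugate shifts need not have equal complexity functions, but it is classical that the complexity growth rates agree, and more precisely that $s_n$ stabilizes for one if and only if it does for the other. Concretely, a conjugacy realized with window $[-m,n]$ induces for each large $N$ a bijection between $\LL_N(X)$ and $\LL_{N'}(Y)$ for a suitable $N'$ differing by a bounded shift $m+n$; writing $p_N(Y)$ in terms of $p_{N+m+n}(X)$ shows that the difference sequence $s_N(Y)$ is, up to the bounded index shift, governed by $s_{N+m+n}(X)$. Since $s_n(X)$ is eventually constant by Proposition~\ref{propositionComplexity}, the same holds for $s_n(Y)$, and the two eventual constants coincide because $s_n$ counts the left-degree excess, which matches the number of "branchings" preserved by a bijective block map. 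This gives condition~1 for $Y$ and the equality $\lim s_n(Y) = \lim s_n(X)$.

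Next I would show that $\omega(X) = \omega(Y)$. This is where the dynamical nature of the argument is essential: a conjugacy $\phi$ is a topological conjugacy of the dynamical systems $(X,\sigma_A)$ and $(Y,\sigma_B)$, so it maps orbits bijectively to orbits and preserves the right asymptotic equivalence relation. Indeed, if $x,x' \in X$ satisfy $x'^+ = y'^+$ after suitable shifts, then because $\phi$ is continuous, shift-commuting, and has a block-code inverse, the images $\phi(x),\phi(x')$ also share a tail; conversely the inverse block code transports asymptotic pairs in $Y$ back to $X$. Hence $\phi$ induces a bijection between the asymptotic classes of $X$ and those of $Y$, and within each class it gives a bijection on the set of orbits $o(C)$, so $\omega(C)$ is preserved class by class. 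Summing yields $\omega(X) = \omega(Y)$. Combining with the previous paragraph, $\lim s_n(Y) = \lim s_n(X) = \omega(X) = \omega(Y)$, so both conditions of Proposition~\ref{propositionCaracterizationDendric} hold for $Y$, and $Y$ is eventually dendric. By symmetry (the inverse of a conjugacy is a conjugacy), the class is closed under conjugacy.

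The main obstacle I anticipate is the bookkeeping in the first step: verifying that the stabilization and the limiting value of $s_n$ transfer through a block code with a genuinely two-sided window requires care, since the window width $m+n$ introduces an index shift and the natural bijections are between languages of slightly different lengths. The asymptotic-class step is conceptually the heart of the argument but technically lighter, because asymptotic equivalence is defined purely in terms of tails and orbit structure, both manifestly conjugacy-invariant; the only point needing attention is confirming that $\phi$ maps a non-periodic orbit to a non-periodic orbit and the unique periodic orbit of a class (if present) to the unique periodic orbit of the image class, so that the counts $\Card(o(C)) - 1$ match exactly.
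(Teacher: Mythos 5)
Your second step (conjugacy invariance of the asymptotic structure, hence $\omega(Y)=\omega(X)$) is sound, but the first step rests on a false claim, and this is a genuine gap. A conjugacy $\phi$ with window $[-m,n]$ does \emph{not} induce a bijection between $\LL_N(X)$ and $\LL_{N'}(Y)$ for any choice of levels: the block map only gives a \emph{surjection} $\LL_{N+m+n}(X)\to \LL_N(Y)$, and injectivity of $\phi$ on bi-infinite points does not descend to finite blocks (two blocks may have equal images while the points extending them differ far away; e.g.\ the inverse of the $2$-block code collapses $p_{N+1}(X)$ words onto $p_N(X)$ words). Consequently conjugate shifts need not have equal complexities up to an index shift; all one gets from $\phi$ and $\phi^{-1}$ is a two-sided sandwich $p_N(Y)\le p_{N+k}(X)$ and $p_N(X)\le p_{N+k'}(Y)$. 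These bounds trap $p_N(Y)$ between two linear functions of the same slope, but a sequence can oscillate between parallel linear bounds with non-constant increments, so they do not force $s_N(Y)=p_{N+1}(Y)-p_N(Y)$ to be eventually constant, nor identify its limit. Thus condition~1 of Proposition~\ref{propositionCaracterizationDendric} for $Y$ is not established, and the appeal to ``branchings preserved by a bijective block map'' has no content since no such bijection exists.

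The strategy can nevertheless be completed, but only by letting the two conditions interact instead of verifying them independently, which is exactly what your proposal does not do. From the sandwich, $s_n(Y)$ is bounded, so $Y$ has finitely many asymptotic classes (Proposition~\ref{propositionFiniteNbClasses}); by Proposition~\ref{pro:eqomegaC}, Equation~\eqref{eqs_n}, and the prefix-injectivity argument used in the proof of Proposition~\ref{propositionCaracterizationDendric}, one gets $s_n(Y)\ge \omega(Y)$ for all large $n$ (an inequality valid without any dendricity assumption on $Y$). Your step two gives $\omega(Y)=\omega(X)=\lim s_n(X)=:s$, and the upper sandwich gives $p_N(Y)\le sN+O(1)$; summing the inequalities $s_n(Y)\ge s$ then shows that $s_n(Y)>s$ can happen for only finitely many $n$, whence $s_n(Y)=s$ eventually and Proposition~\ref{propositionCaracterizationDendric} applies. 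Note that this corrected route is genuinely different from the paper's proof, which factors the conjugacy as a higher block code followed by an alphabetic morphism (Lemmas~\ref{lemmaHigher} and~\ref{lemmaMorphism}) and argues directly on extension graphs via the simple-tree property (Proposition~\ref{propositionExt}); but as written your complexity step would fail, and fixing it requires precisely the counting machinery sketched above.
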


We first treat the following particular case of conjugacy.
Let $X$ be a shift space on the alphabet $A$ and let $k \ge 1$.
Let $f : \LL_k(X) \rightarrow A_k$ be a bijection from the set $\LL_k(X)$ of blocks of length $k$ of $X$ onto an alphabet $A_k$.
The map $\gamma_k : X \rightarrow A_k^{\Z}$ defined for $x \in X$ by $y = \gamma_k(x)$ if for every $n \in \Z$
\begin{displaymath}
y_n = f(x_n \cdots x_{n+k-1})
\end{displaymath}
is the $k$-th \emph{higher block code} on $X$.
The shift space $X^{(k)} = \gamma_k(X)$ is called the $k$-th \emph{higher block shift space} of $X$.
It is well known that the $k$-th higher block code is a conjugacy.

We extend the bijection $f : \LL_k(X) \rightarrow A_k$ to a map still denoted $f$ from $\LL_{\ge k}(X)$ to $\LL_{\ge 1}(X^{(k)})$ by $f(a_1 a_2 \cdots a_n) = f(a_1 \cdots a_k) \cdots f(a_{n-k+1} \cdots a_n)$.
Note that all nonempty elements of $\LL(X^{(k)})$ are image by $f$ of an element of $\LL(X)$, that is, $\LL(X^{(k)}) = \{ f(w) \mid w \in \LL_{\ge k}(X) \} \cup \{ \varepsilon \}$.

\begin{example}
\label{ex:Fibok}
Let $X$ be the Fibonacci shift.
We show that the $2$-block extension $X^{(2)}$ of $X$ is eventually dendric with threshold $1$.
Set $A_2 = \{u,v,w\}$ with $f : aa \mapsto u, ab \mapsto v, ba \mapsto w$.
Since $X$ is dendric, the graph $\E_1(w)$ is a tree for every word $w \in \LL(X^{(2)})$ of length at least $1$ (but not for $w=\varepsilon$).
Thus $X^{(2)}$ is eventually dendric.
It is actually a tree shift space of characteristic $2$ since the graph $\E_1(\varepsilon)$ is the union of two trees (see Figure~\ref{figure2blocks}).

\begin{figure}[hbt]
	\centering
	\tikzset{node/.style={rectangle,draw,rounded corners=1.4ex}}
	\begin{tikzpicture}
		\node[node](eul) {$u$};
		\node[node](evl) [below = 0.3cm of eul] {$v$};
		\node[node](ewl) [below = 0.3cm of evl] {$w$};
		\node[node](eur) [right = 1.5cm of eul] {$u$};
		\node[node](evr) [below = 0.3cm of eur] {$v$};
		\node[node](ewr) [below = 0.3cm of evr] {$w$};
		\path[draw,thick, shorten <=-1pt, shorten >=-1pt]
			(eul) edge node {} (evr)
			(evl) edge node {} (ewr)
			(ewl) edge node {} (evr);
		\path[draw,thick, shorten <=-2pt, shorten >=-3pt]
			(ewl) edge node {} (eur);
		\node[node](vwul) [below right = 0cm and 1.5cm of eur] {$u$};
		\node[node](vwwl) [below = 0.5cm of vwul] {$w$};
		\node[node](vwur) [right = 1.5cm of vwul] {$u$};
		\node[node](vwvr) [below = 0.5cm of vwur] {$v$};
		\path[draw,thick]
			(vwul) edge node {} (vwur);
		\path[draw,thick, shorten <=-2pt, shorten >=-2pt]
			(vwul) edge node {} (vwvr)
			(vwwl) edge node {} (vwur);
	\end{tikzpicture}

 \caption{The extension graphs $\E_1(\varepsilon)$ and $\E_1(vw)$.}
 \label{figure2blocks}
\end{figure}
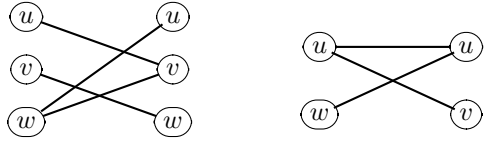
\end{example}

\begin{lemma}
	\label{lemmaHigher}
For every $k \ge 1$, the $k$-th higher block shift space $X^{(k)}$ is eventually dendric if and only if $X$ is eventually dendric.
\end{lemma}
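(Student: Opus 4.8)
The plan is to reduce the entire statement to a single structural fact: for every word $w \in \LL_{\ge k}(X)$, the extension graph $\E_1(f(w), X^{(k)})$ is isomorphic (as a bipartite graph) to $\E_1(w, X)$. Granting this, both implications are immediate, since being a tree is invariant under graph isomorphism, and all that remains is a translation of thresholds. The key input is the already-recorded identity $\LL(X^{(k)}) = \{ f(u) \mid u \in \LL_{\ge k}(X) \} \cup \{ \varepsilon \}$ together with the injectivity of $f$ on $\LL_{\ge k}(X)$, which lets one decode any higher-block word uniquely into a word of $\LL(X)$.

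First I would set up the vertex and edge correspondences. Fix $w = a_1 \cdots a_n$ with $n \ge k$ and put $W = f(w)$, so $|W| = n-k+1 \ge 1$. Any right extension $W\beta \in \LL(X^{(k)})$ with $\beta \in A_k$ decodes to a word of $\LL(X)$ that extends $w$ on the right by exactly one letter, i.e.\ equals $f(wc)$ for a unique $c \in A$ with $wc \in \LL(X)$; since the new block $\beta = f(a_{n-k+2}\cdots a_n c)$ is determined by $c$ (the prefix $a_{n-k+2}\cdots a_n$ being fixed) and $f$ is injective on blocks, the assignment $c \mapsto \beta$ is a bijection from $R_1(w,X)$ onto $R_1(W, X^{(k)})$. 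Symmetrically, $d \mapsto \gamma = f(d\,a_1\cdots a_{k-1})$ is a bijection from $L_1(w,X)$ onto $L_1(W, X^{(k)})$. For the edges, a pair $(\gamma,\beta)$ lies in $E_1(W, X^{(k)})$ iff $\gamma W \beta \in \LL(X^{(k)})$, and decoding shows $\gamma W \beta = f(dwc)$; hence $(\gamma,\beta)$ is an edge iff $dwc \in \LL(X)$, that is iff $(d,c) \in E_1(w,X)$. This yields the desired isomorphism $\E_1(f(w), X^{(k)}) \cong \E_1(w, X)$, and in particular one graph is a tree precisely when the other is.

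It then remains to translate thresholds. If $X$ is eventually dendric with threshold $m$, then for any $W \in \LL(X^{(k)})$ with $|W| \ge \max(1, m-k+1)$ the corresponding $w$ has length $\ge m$, so $\E_1(w,X)$ is a tree and therefore so is $\E_1(W, X^{(k)})$; thus $X^{(k)}$ is eventually dendric. Conversely, if $X^{(k)}$ is eventually dendric with threshold $m'$, then every $w \in \LL(X)$ with $|w| \ge \max(k,\, m'+k-1)$ yields $|W| \ge m'$, forcing $\E_1(W, X^{(k)})$ and hence $\E_1(w,X)$ to be a tree, so $X$ is eventually dendric.

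The only genuine obstacle is the bookkeeping in the correspondence above: one must track carefully which overlapping block is altered by a one-letter extension on each side, and confirm that the injectivity of $f$ makes each assignment a true bijection rather than merely a surjection. A related subtlety, already visible in Example~\ref{ex:Fibok}, is that the isomorphism is valid only for $|W| \ge 1$: the graph $\E_1(\varepsilon, X^{(k)})$ need not be a tree even when $X$ is dendric. This is exactly why the conclusion is ``eventually dendric'' rather than ``dendric'' and why the empty word must be excluded throughout the argument.
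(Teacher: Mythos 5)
Your proof is correct and takes essentially the same route as the paper: both arguments establish the bipartite-graph isomorphism $\E_1(w,X) \cong \E_1(f(w),X^{(k)})$ for every $w \in \LL_{\ge k}(X)$ (the paper's maps $\lambda,\rho$, sending $a \in L_1(w)$ to the first letter of $f(aw)$ and $b \in R_1(w)$ to the last letter of $f(wb)$, are exactly your vertex correspondences), and then translate thresholds via $\max(1,m-k+1)$. Your extra care about the bijectivity of the vertex maps and the exclusion of the empty word simply makes explicit what the paper leaves implicit.
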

\begin{proof}
We define for every $w \in \LL_{\ge k}(X)$ a map from $\E_1(w)$ to $\E_1(f(w))$ as follows.

To every $a \in L_1(w)$, we associate the first letter $\lambda(a)$ of $f(aw)$ and to every $b \in R_1(w)$, we associate the last letter $\rho(b)$ of $f(wb)$.
Then, since $f(awb) = \lambda(a) f(w) \rho(b)$, the pair $(a,b)$ is in $E_1(w)$ if and only if $(\lambda(a), \rho(b))$ is in $E_1(f(w))$.
Thus, the maps $\lambda,\rho$ define an isomorphism from $\E_1(w)$ onto $\E_1(f(w))$.

Thus we conclude that $X$ is eventually dendric with threshold $m$ if and only if $X^{(k)}$ is eventually dendric with threshold $M$ with $0 \le M \le \sup(1,m-k+1)$.
\end{proof}

\begin{example}
Let $X$ be the Fibonacci shift.
For all $k \ge 2$, $X^{(k)}$ is an eventually dendric shift space with threshold $1$.
\end{example}

\begin{example}
Let $X$ be the shift space associated to the two-sided infinite word $\cdots ab ab \cdot ab ab \cdots$.
$X$ is an eventually dendric shift space with threshold $1$ (the empty word has 2 connected components).
For every $k \ge 1$, the shift space $X^{(k)}$ is eventually dendric with threshold $1$.
\end{example}

A morphism $\alpha : A^* \rightarrow B^*$ is called \emph{alphabetic} if $\alpha(A) \subset B$.

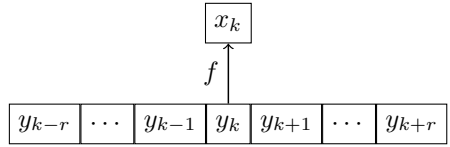
\begin{figure}[hbt]
	\centering
	\tikzset{node/.style={rectangle,draw,minimum size=15}}
	\begin{tikzpicture}
		\node[node](xk) {$x_k$};
		\node[node](yk) [below = 0.8cm of xk] {$y_k$};
		\node[node](yk+1) [right = 0cm of yk] {$y_{k+1}$};
		\node[node](yk+) [right = 0cm of yk+1] {$\cdots$};
		\node[node](yk+r) [right = 0cm of yk+] {$y_{k+r}$};
		\node[node](yk-1) [left = 0cm of yk] {$y_{k-1}$};
		\node[node](yk-) [left = 0cm of yk-1] {$\cdots$};
		\node[node](yk-r) [left = 0cm of yk-] {$y_{k-r}$};
		\draw[line width=0.2mm, ->, left] (yk) edge node {$f$} (xk);
	\end{tikzpicture}

 \caption{The sliding block code.}
 \label{figureSlidingBlock}
\end{figure}
\begin{lemma}
\label{lemmaMorphism}
Let $X$ be an eventually dendric shift space on the alphabet $A$ and let  $\alpha : A^* \rightarrow B^*$ be an alphabetic morphism which induces a conjugacy from $X$ onto a shift space space $Y$.
Then $Y$ is eventually dendric.
\end{lemma}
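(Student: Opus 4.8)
The plan is to verify, for $Y$, the two conditions of Proposition~\ref{propositionCaracterizationDendric}: that $s_n(Y)$ is eventually constant and that $\lim s_n(Y)=\omega(Y)$. The guiding observation is that an alphabetic morphism preserves lengths, so $\alpha$ restricts for each $n$ to a \emph{surjection} $\LL_n(X)\to\LL_n(Y)$ (every factor of $\alpha(x)$ is the $\alpha$-image of a factor of $x$ of the same length). Consequently $p_n(Y)\le p_n(X)$, and I set $\delta_n=p_n(X)-p_n(Y)\ge 0$. Since $X$ is eventually dendric, Proposition~\ref{propositionComplexity} gives an $N_0$ and a value $s$ with $s_n(X)=s$ for $n\ge N_0$, and Proposition~\ref{propositionCaracterizationDendric} gives $s=\omega(X)$.

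First I would prove $\omega(Y)=\omega(X)$, a purely dynamical comparison. Both $\alpha$ and its inverse $\psi=\alpha^{-1}$ are sliding block codes, hence continuous and commuting with the shift, so each sends a pair of points with a common right tail to a pair with a common right tail; thus $\alpha$ carries the right asymptotic equivalence of $X$ bijectively onto that of $Y$. Since $\alpha$ is moreover a bijection commuting with the shift, it induces a bijection on orbits, so each asymptotic class $C$ is matched with a class $\alpha(C)$ having the same number of orbits. Hence $\omega(C)=\omega(\alpha(C))$ and $\omega(Y)=\omega(X)=s$. In particular $\omega(Y)$ is finite, so by Proposition~\ref{pro:eqomegaC} there are finitely many right asymptotic classes $C'$ of $Y$ and each set $LS_\omega(C')$ is finite.

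Next I would establish the pointwise lower bound $s_n(Y)\ge\omega(Y)$ for all large $n$. Let $\pi$ send a right infinite word to its prefix of length $n$. The finitely many elements of $\bigcup_{C'}LS_\omega(C')$ are distinct right infinite words, so for $n$ large their length-$n$ prefixes are pairwise distinct; each such prefix $\pi(u)$ is left-special in $Y$ with $\ell(\pi(u))\ge\ell_{C'}(u)$, since every $a$ with $x^+=au$ for some $x\in C'$ left-extends $\pi(u)$. Summing and using Equation~\eqref{eqs_n} together with Proposition~\ref{pro:eqomegaC},
\[
s_n(Y)=\sum_{w\in LS_n(Y)}(\ell(w)-1)\ \ge\ \sum_{C'}\sum_{u\in LS_\omega(C')}(\ell_{C'}(u)-1)=\sum_{C'}\omega(C')=\omega(Y)=s,
\]
so there is $N_1$ with $s_n(Y)\ge s$ for $n\ge N_1$. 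A short monotonicity argument then upgrades this to eventual constancy: for $n\ge\max(N_0,N_1)$,
\[
\delta_{n+1}-\delta_n=s_n(X)-s_n(Y)=s-s_n(Y)\le 0,
\]
so $(\delta_n)$ is eventually nonincreasing; being a nonnegative integer sequence, it is eventually constant, which forces $s_n(Y)=s$ for all large $n$. Thus $s_n(Y)$ is eventually constant with limit $s=\omega(Y)$, and Proposition~\ref{propositionCaracterizationDendric} yields that $Y$ is eventually dendric.

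The step I expect to require the most care is the lower bound $s_n(Y)\ge\omega(Y)$: one must check that the length-$n$ prefixes of the rays in the various $LS_\omega(C')$ are genuinely distinct and left-special with the stated multiplicity, which is precisely where the finiteness furnished by $\omega(Y)=s<\infty$ is used. Everything else is a transfer of already established invariants, the crucial structural input being that alphabetic morphisms preserve length, so that $\delta_n\ge 0$ and the monotonicity trick applies — a feature that would fail for a general, length-changing conjugacy.
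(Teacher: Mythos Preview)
Your argument is correct and takes a genuinely different route from the paper's proof. The paper works directly with extension graphs: it invokes Proposition~\ref{propositionExt} to find $n$ so that $\E_{r+1}(u)$ is a \emph{simple} tree for every long enough $u\in\LL(X)$ (where $2r+1$ is the window of $\alpha^{-1}$), then for each long $w\in\LL(Y)$ exhibits $\E_1(w)$ as the image, under a bipartite graph morphism, of a suitable simple subtree of some $\E_{r+1}(u)$, and checks that such images remain simple trees. Your proof instead goes through the dynamical characterization of Proposition~\ref{propositionCaracterizationDendric}: you show $\omega(Y)=\omega(X)=s$ by conjugacy invariance of asymptotic classes and orbits, obtain $s_n(Y)\ge s$ for large $n$ from the finitely many rays in $\bigcup_{C'}LS_\omega(C')$, and then squeeze via the integer sequence $\delta_n=p_n(X)-p_n(Y)\ge 0$, whose eventual nonincrease forces $s_n(Y)=s$ eventually. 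The paper's approach yields extra information (the extension graphs of long words in $Y$ are not just trees but simple trees) and motivates the development of Section~\ref{sectionSimpletrees}; your approach is shorter and more conceptual, bypassing simple trees entirely, but hinges on $\alpha$ being length-preserving so that $\delta_n\ge 0$ --- which is exactly why the paper needs the separate Lemma~\ref{lemmaHigher} to reduce an arbitrary conjugacy to this case.
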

\begin{proof} 
Since $\alpha$ is invertible, there exists an integer $r \ge 0$ and a map $f : B^{2r+1} \rightarrow A$ such that for $x=(x_k)_{k\in \Z}$ and $y=(y_k)_{k\in\Z}$, one has $y = \alpha(x)$ if and only if for every $k \in \Z$, one has (see Figure~\ref{figureSlidingBlock})
\begin{displaymath}
x_k = f(y_{k-r} \cdots y_{k-1} y_k y_{k+1} \cdots y_{k+r}).
\end{displaymath}

We extend the definition of $f$ to a map from $\LL_{\ge 2r+1}(X)$ to $A$
as follows.
For $w = b_{1-r} \cdots b_{n+r}\in\LL_{\ge 2r+1}(Y)$,
set $f(w) = a_1 \cdots a_n$ where $a_i = f(b_{i-r} \cdots b_i \cdots b_{i+r})$.
Note  that if $u=f(w)$ and $w=svt$ with $s,t\in\LL_r(Y)$, then $v = \alpha(u)$ (see Figure~\ref{figurefuv}).

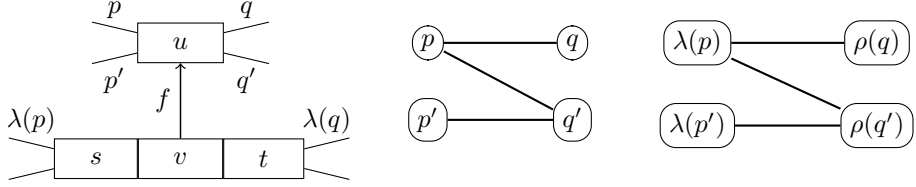
\begin{figure}[hbt]
\centering
 \tikzset{node/.style={rectangle,draw,minimum size=15}}
 \tikzset{title/.style={minimum size=-0cm,inner sep=0pt}}
 \tikzset{circ/.style={rectangle,draw,rounded corners=1.4ex}}
 \begin{tikzpicture}
  \node[node](u) {$\quad u \quad$};
  \node[node](v) [below = 1cm of u] {$\quad v \quad$};
  \node[node](t) [right = 0cm of v] {$\quad t \quad$};
  \node[node](s) [left = 0cm of v] {$\quad s \quad$};
  \draw[line width=0.2mm, ->, left] (v) edge node {$f$} (u);
  \node[title](p) [above left = 0cm and 0.6cm of u] {};
  \node[title](p2) [below left = 0cm and 0.6cm of u] {};
  \node[title](q) [above right = 0cm and 0.6cm of u] {};
  \node[title](q2) [below right = 0cm and 0.6cm of u] {};
  \node[title](lp) [above left = 0cm and 0.6cm of s] {};
  \node[title](lp2) [below left = 0cm and 0.6cm of s] {};
  \node[title](lq) [above right = 0cm and 0.6cm of t] {};
  \node[title](lq2) [below right = 0cm and 0.6cm of t] {};
  \path[-]
   (u) edge node[above] {$p$} (p)
   (u) edge node[below] {$p'$} (p2)
   (u) edge node[above] {$q$} (q)
   (u) edge node[below] {$q'$} (q2)
   (s) edge node[above] {$\lambda(p)$} (lp)
   (s) edge node {} (lp2)
   (t) edge node[above] {$\lambda(q)$} (lq)
   (t) edge node {} (lq2);
   
  \node[circ](kp) [right = 2.5cm of u] {$p$};
  \node[circ](kp2) [below= 0.5cm of kp] {$p'$};
  \node[circ](kq) [right= 1.5cm of kp] {$q$};
  \node[circ](kq2) [below= 0.5cm of kq] {$q'$};
  \path[draw,thick]
   (kp) edge node {} (kq)
   (kp) edge node {} (kq2)
   (kp2) edge node {} (kq2);
   
  \node[circ](1p) [right = 1cm of kq] {$\lambda(p)$};
  \node[circ](1p2) [below= 0.5cm of 1p] {$\lambda(p')$};
  \node[circ](1q) [right= 1.5cm of 1p] {$\rho(q)$};
  \node[circ](1q2) [below= 0.5cm of 1q] {$\rho(q')$};
  \path[draw,thick]
   (1p) edge node {} (1q)
   (1p) edge node {} (1q2)
   (1p2) edge node {} (1q2);
 \end{tikzpicture}
\caption{The map $f$ (on the left), the graph $\E_k(u)$ (on the center) and the graph $\E_1(w)$ (on the right).}
\label{figurefuv}
\end{figure}

Let $n$ be the integer given by Proposition~\ref{propositionExt} for $k=r+1$.
We claim that every graph $\E_1(w)$ for $|w| \ge n+2r$ is a tree.
Let indeed $s,t \in \LL_r(Y)$ and $v \in \LL_{\ge n}(Y)$ be such that $w=svt$.
Let $u = f(svt)$ (see Figure~\ref{figurefuv}).

Let
$
E'_k(u) = \{ (p,q) \in L_k(u) \times R_k(u) \mid \alpha(puq) \in BwB \}
$
and let $L'_k(u)$ (resp. $R'_k(u)$) be the set of $p \in L_k(u)$ (resp. $q \in R_k(u$)) which are connected to $L_k(u)$ (resp. $R_k(u)$) by an edge in $E'_k(u)$.
Let $\E'_k(u)$ be the subgraph of $\E_k(u)$ obtained by restriction to the set of vertices which is the disjoint union of $L'_k(u)$ and $R'_k(u)$ (and that has, thus, $E_k'(u)$ as set of edges.

Claim 1.
The graph $\E'_k(u)$ is a simple tree.
Indeed, by Proposition~\ref{propositionExt}, the graph $\E_k(u)$ is a simple tree.
We may assume that $u$ is $k$-bispecial (otherwise, the property is obviously true).
Let $(p,q)$ be an edge of $\E'_k(u)$.
Then $(p,q)$ is an edge of $\E_k(u)$ and since the latter is a simple tree either $p$ is the unique vertex in $L_k(u)$ such that $pu$ is right-special or $q$ is the unique vertex in $R_k(u)$ such that $uq$ is left-special (both cases can occur simultaneously).
Assume the first case, the other being proved in a symmetric way.
If $(p',q')$ is another edge of $\E'(u)$, then $(p,q')$ is an edge of $\E_k(u)$.
Since $\alpha(p) \in Bs$ and $\alpha(q) \in tB$, we have actually $(p,q') \in E'_k(u)$.
Thus $\E'_k(u)$ contains the two vertices of $\E_k(u)$ connected to more than one other vertex and this implies that $\E'_k(u)$ is a simple tree.

For $p \in L'_k(u)$, let $\lambda(p)$ be the first letter of $\alpha(p)$ and for $q \in R'_k(u)$, let $\rho(q)$ be the last letter of $\alpha(q)$.

Claim 2.
The graph $\E_1(w)$ is the image by the maps $\lambda, \rho$ of the graph $\E'_k(u)$.
Indeed, one has $(a,b)\in E_1(w)$ if and only if there exist $(p,q) \in E'_k(u)$ such that $\lambda(p)=a$ and $\rho(q)=b$.

Let us consider a graph homomorphism $\phi$ preserving bipartiteness and such such that left vertices are sent to left vertices and right vertices to right ones:
Then, it is easy to verify that the image of a simple tree by $\phi$ is again a simple tree.
Thus $\E_1(w)$ is a simple tree, which concludes the proof.
\end{proof}

We are now ready to prove the theorem.

\begin{proofof}{of Theorem~\ref{theoremConjugacy}}
Every conjugacy is a composition of a higher block code and an alphabetic morphism.
Thus Theorem~\ref{theoremConjugacy} is a direct consequence of Lemmas~\ref{lemmaHigher} and~\ref{lemmaMorphism}.
\end{proofof}

\begin{example}\label{exampleConjugacy}
We have seen in Example~\ref{exampleImageTribo} that the image of the Tribonacci shift by the morphism $\alpha : a \mapsto a, b \mapsto a, c \mapsto c$ is eventually dendric.
This is actually a consequence of Theorem~\ref{theoremConjugacy} since $\alpha$ is a conjugacy, as we have seen in Example~\ref{exampleImageTribo}.
The images of a Sturmian shift space by a non trivial alphabetic morphism have been investigated in~\cite{Vesely2018}.
\end{example}

\section{Minimal eventually dendric shifts}
\label{sectionProperty}
A shift space $X$ is \emph{irreducible} if for any $u,v \in \LL(X)$ there is a word $w$ such that $uwv \in \LL(X)$ (equivalently $\LL(X)$ is called \emph{recurrent}).

A nonempty shift space is \emph{minimal} if it does not contain properly another nonempty shift space.
As well known, $X$ is minimal if and only if it is \emph{uniformly recurrent}, that is for any $w \in \LL(X)$ there exists an $n \ge 0$ such that $w$ is a factor of any word in $\LL_n(X)$.
If $X$ is minimal and infinite, then there exists for every $w \in \LL(X)$ an integer $n \ge 1$ such that $w^n \notin \LL(X)$.
Indeed, otherwise, $\LL(X)$ contains the periodic word with period $w$ and thus $X$ is equal to the finite shift space formed by the shifts of $\cdots ww \cdot ww \cdots$.

A minimal shift space is irreducible but the converse is false, since for example the full shift $A^\Z$ is irreducible but not minimal as soon as $A$ has at least two elements.

Let $X$ be a shift space.
The set of \emph{complete return words} to a word $w \in \LL(X)$ is the set $\CR_X(w)$ of words having exactly two factors equal to $w$, one as a proper prefix and the other one as a proper suffix.
It is clear that $X$ is minimal if and only if it is irreducible and if for every word $w$ the set of complete return words to $w$ is finite.

If $wu$ is a complete return word to $w$, then $u$ is calleda (right) \emph{return word} to $w$.
We denote by $\RR_X(w)$ the set of return words to $w$.
Clearly $\Card(\CR_X(w))=\Card(\RR_X(w))$.

\begin{example}
	\label{exampleReturn}
Let $X$ be the Tribonacci shift  (see Example~\ref{exampleImageTribo}).
The image of $X$ under the morphism $\alpha: a,b \to a, c \to c$.
Then $\RR_X(a) = \{ a,ba,ca \}$ and $\RR_X(c) = \{ abac,ababac,abaabac \}$.
\end{example}

By a result of~\cite{BalkovaPelantovaSteiner2008}, if $X$ is minimal and neutral (a fortiori if $X$ is dendric) the set $\RR(w)$ has $\Card(A)$ elements for every $w \in \LL(X)$.
This is not true anymore for eventually dendric shift spaces, as shown in the following example.

\begin{example}
Let $X$ be the Tribonacci shift and let $Y=\alpha(X)$ be, as in Example~\ref{exampleConjugacy} the image of $X$ under the morphism $\alpha : a,b \to a, c \to c$.
Then, using Example~\ref{exampleReturn}, we find $\RR_Y(a) = \{ a,ca \}$ while $\RR_Y(c) = \{ aaac,aaaaac,aaaaaac \}$.
\end{example}

We will prove that for eventually dendric sets, a weaker property is true.
It implies that the cardinality of sets of return words is eventually constant.

For $w \in \LL(X)$, set $\rho_X(w) = r_1(w)-1$ and for a set $W \subset \LL$, set $\rho_X(W) = \sum_{w \in W} \rho_X(w)$.
By the symmetric of  Proposition~\ref{propositionNeutral}, for every neutral word $w \in \LL(X)$, we have
\begin{equation}
\rho_X(w) = \sum_{a \in L_1(w)} \rho_X(aw).
\label{eqLeftAdditive}
\end{equation}

\begin{theorem}
	\label{theoremReturn}
Let $X$ be an irreducible shift space which is eventually dendric with threshold $m$.
For every $w \in \LL(X)$, the set $\RR_X(w)$ is finite.
Moreover, for every $w \in \LL_{\ge m}(X)$, we have
\begin{equation}
\Card (\RR_X(w) ) = 1 + \rho_X(\LL_m(X)).
\label{eqReturn}
\end{equation}
\end{theorem}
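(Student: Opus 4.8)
The plan is to prove the two assertions separately, using the characterization of eventually dendric shifts via left-special words together with the additivity of the function $\rho_X$ on neutral words given by Equation~\eqref{eqLeftAdditive}. For the first assertion, finiteness of $\RR_X(w)$ for every $w \in \LL(X)$, I would argue that in an irreducible eventually dendric shift the set $LS_{\ge m}(X)$ of left-special words beyond the threshold has a tree-like branching structure that is finitely generated: by Proposition~\ref{propositionSG}, every word of $LS_{\ge m}(X)$ has a unique right extension remaining in $LS(X)$, so the left-special words of increasing length form finitely many infinite chains. Since $s_n(X)$ is eventually constant (Proposition~\ref{propositionComplexity}) and bounds the number of left-special words (Proposition~\ref{propositionCassaigne}), there are only finitely many maximal chains, and this bounds the number of distinct return words; irreducibility guarantees $\RR_X(w)$ is nonempty. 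The finiteness then follows because a complete return word to $w$ cannot be arbitrarily long without forcing a repetition of left-special behavior that the uniqueness of extensions forbids.

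For the main formula~\eqref{eqReturn}, the key idea is to show that the quantity $1 + \rho_X(\LL_m(X))$ is independent of $w$ for all $w \in \LL_{\ge m}(X)$, and equals $\Card(\RR_X(w))$. First I would establish the interpretation of $\rho_X(w) = r_1(w) - 1$ as counting the ``extra'' right extensions of $w$, so that $\rho_X(\LL_m(X)) = \sum_{w \in \LL_m(X)} (r_1(w)-1) = s_m(X)$ by Equation~\eqref{eqs_n}. The heart of the argument is that $\Card(\RR_X(w))$ equals $\Card(\CR_X(w))$, the number of complete return words, and that this count can be organized by the first-return structure: each return word corresponds to a distinct way of leaving $w$ and coming back, which I would put in bijection with the branching recorded by the right-special (equivalently, via the symmetric statement, the left-special) structure of the words of length $m$.

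The cleanest route is to prove that $\Card(\RR_X(w))$ is constant for $w \in \LL_{\ge m}(X)$ by a telescoping argument along single-letter extensions. Concretely, for $w \in \LL_{\ge m}(X)$ and a letter $a$ with $aw \in \LL(X)$, I would compare $\RR_X(w)$ with $\RR_X(aw)$, using that $w$ is neutral (its extension graph is a tree beyond threshold $m$) so that Equation~\eqref{eqLeftAdditive} applies. The additivity $\rho_X(w) = \sum_{a \in L_1(w)} \rho_X(aw)$ is exactly the bookkeeping needed to show that passing from $w$ to its left extensions preserves the total return-word count, while the anchoring at length $m$ gives the value $1 + \rho_X(\LL_m(X))$. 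I expect the main obstacle to be the bijective correspondence between complete return words and the extension data: one must carefully verify that every complete return word to a longer word $w'$ arises from exactly one complete return word to a shorter word together with a well-defined choice recorded by $\rho_X$, and that no return words are lost or double-counted when $w'$ is not left-special. Handling the boundary case where extension graphs are trees (so $w$ is neutral) but $w$ fails to be bispecial will require checking that the formula degenerates correctly, and this neutrality-driven accounting, rather than any single hard estimate, is where the real care lies.
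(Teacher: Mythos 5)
Your proposal has genuine gaps at both of its key steps. For finiteness, the claim that the finitely many chains of left-special words ``bounds the number of distinct return words'' is false as stated: in the Tribonacci shift there is a single chain of left-special words (the prefixes of $\varphi^\omega(a)$), yet $\RR_X(a)=\{a,ba,ca\}$ has three elements. Return words are governed by the \emph{right}-branching of the prefix tree of complete return words, not by the number of left-special chains. Worse, your closing assertion that ``a complete return word to $w$ cannot be arbitrarily long without forcing a repetition of left-special behavior'' is exactly the hard point and is unsupported: under irreducibility alone nothing a priori prevents an infinite right-extension of $w$ in which $w$ never recurs, and ruling this out amounts to minimality (Corollary~\ref{theoremMinimalDendric}), which the paper \emph{deduces from} Theorem~\ref{theoremReturn} rather than the other way around. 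The actual mechanism in the paper is Proposition~\ref{propositionBound}: for every suffix code $U\subset\LL(X)$ the quantity $\rho_X(U)$ is finite, and equals $\rho_X(\LL_m(X))$ when $U$ is a finite $X$-maximal suffix code contained in $\LL_{\ge m}(X)$. Your sketch never invokes this, and uses irreducibility only to get $\RR_X(w)\neq\emptyset$, which is far too weak.

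For the formula, the telescoping between $\RR_X(w)$ and $\RR_X(aw)$ is not a proof but a restatement of what must be shown: no precise relation between $\CR_X(aw)$ and $\CR_X(w)$ is given, and Equation~\eqref{eqLeftAdditive} applied to $w$ and its left extensions does not by itself yield one --- you concede this yourself when you defer ``the bijective correspondence between complete return words and the extension data,'' which is the entire content of the theorem. The paper supplies that correspondence concretely: it considers the set $P$ of proper prefixes of $\CR_X(w)$, so that $\Card(\CR_X(w))=\alpha(P)+1$ by leaf-counting in the prefix tree (Equation~\eqref{equationCard}); it shows that the set $U$ of elements of $P$ that are not proper prefixes of $w$ is an $X$-maximal suffix code (this is where recurrence is really used); it observes that $\alpha$ vanishes on proper prefixes of $w$ and coincides with $\rho_X$ on $U$, whence $\alpha(P)=\rho_X(U)$; and it then applies Proposition~\ref{propositionBound} to get $\rho_X(U)=\rho_X(\LL_m(X))$. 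Note that neutrality via Equation~\eqref{eqLeftAdditive} enters only inside the induction proving Proposition~\ref{propositionBound}, applied to the words of a suffix code of prefixes of return words --- not to $w$ and its left extensions as in your plan. Your observation that $\rho_X(\LL_m(X))=s_m(X)$ by Equation~\eqref{eqs_n} is correct and a nice reformulation, but it does not close either gap.
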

Note that for $m=0$, we obtain $\Card(\RR_X(w)) = \Card(A)$ since $\rho_X(\varepsilon) = \Card(A)-1$.

A \emph{prefix code} (resp. a \emph{suffix code}) is a set $X$ of words such that none of them is a prefix (resp. a suffix) of another one.

A prefix code (resp. a suffix code) $U \subset \LL(X)$ is called \emph{$X$-maximal} if it is not properly contained in a prefix code (resp. suffix code) $Y \subset \LL(X)$ (see, for instance,~\cite{BerstelDeFelicePerrinReutenauerRindone2012}).

\begin{proposition}
\label{propositionBound}
Let $X$ be a shift space which is eventually dendric with threshold $m$.
Then $\rho_X(U)$ is finite for every suffix code $U\subset \LL(X)$.
If $U$ is a finite $X$-maximal suffix code with $U \subset \LL_{\ge m}(X)$, then
\begin{equation}
\rho_X(U) \ = \rho_X(\LL_m(X)).
\label{equationAdditive2}
\end{equation}
\end{proposition}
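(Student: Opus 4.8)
The plan is to prove Proposition~\ref{propositionBound} by an induction argument based on Equation~\eqref{eqLeftAdditive}, using the fact that a finite $X$-maximal suffix code can be transformed into $\LL_m(X)$ by a sequence of elementary operations that preserve the quantity $\rho_X$. First I would observe that for any neutral word $w$, Equation~\eqref{eqLeftAdditive} states that $\rho_X(w) = \sum_{a \in L_1(w)} \rho_X(aw)$. Since $X$ is eventually dendric with threshold $m$, every word of length at least $m$ is neutral, so this additivity holds for all $w \in \LL_{\ge m}(X)$. The key insight is that extending a word $w \in \LL_{\ge m}(X)$ on the left by all its possible left extensions replaces $w$ in a suffix code by the set $\{aw \mid a \in L_1(w)\}$ without changing the total value of $\rho_X$, and this new set is again a suffix code contained in $\LL(X)$.

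The main steps I would carry out are the following. I would first establish the finiteness of $\rho_X(U)$ for an arbitrary suffix code $U \subset \LL(X)$: since by Proposition~\ref{propositionComplexity} the sequence $s_n(X)$ is eventually constant, the number of right-special words is uniformly bounded (by Proposition~\ref{propositionCassaigne}), and the contribution $\rho_X(w) = r_1(w) - 1$ is nonzero only for right-special words. Because a suffix code contains at most finitely many words sharing any given suffix and the right-special words of each length are bounded in number with bounded excess $r_1(w)-1$, the sum $\rho_X(U)$ converges. I would then prove the equality~\eqref{equationAdditive2} for a finite $X$-maximal suffix code $U \subset \LL_{\ge m}(X)$ by induction on the total length $\sum_{u \in U} |u|$, or more precisely by induction that reduces $U$ toward $\LL_m(X)$. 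The reduction step takes a longest word $w$ in $U$ with $|w| > m$, and since $U$ is $X$-maximal and $w$ has length greater than $m \ge |$ shorter words, one shows that $U$ must in fact contain the full set $\{aw' \mid a \in L_1(w')\}$ for the suffix $w'$ of $w$ of length $|w|-1$; replacing this set by the single word $w'$ gives a new finite $X$-maximal suffix code $U'$ with $U' \subset \LL_{\ge m}(X)$ and, by~\eqref{eqLeftAdditive}, with $\rho_X(U') = \rho_X(U)$. Iterating brings us to $U = \LL_m(X)$, which is itself a finite $X$-maximal suffix code, yielding~\eqref{equationAdditive2}.

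The hard part will be verifying the combinatorial reduction step, namely that $X$-maximality of the suffix code forces the presence of the \emph{complete} left-extension bundle $\{aw' : a \in L_1(w')\}$ whenever a word $w = bw'$ of maximal length is present. The subtlety is that maximality of a suffix code is a statement about every element of $\LL(X)$ having a comparable word in the code (one being a suffix of the other), and I must argue carefully that if $bw' \in U$ for some $b$ but $aw' \notin U$ for some other $a \in L_1(w')$, then one can adjoin a word incomparable with all of $U$, contradicting maximality; conversely that $w'$ itself is not already forced into $U$. I would also need to confirm that the replacement of the bundle by $w'$ genuinely keeps $U'$ a suffix code (no element of $U \setminus \{aw' : a\} $ has $w'$ as a suffix, which follows from $w$ being of maximal length and the bundle being complete) and $X$-maximal. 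Once this bookkeeping is in place, the value $\rho_X(\LL_m(X))$ emerges as the common invariant, and feeding it back into the definition of complete return words will give Equation~\eqref{eqReturn} of Theorem~\ref{theoremReturn}.
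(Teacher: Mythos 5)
Your treatment of the equality~\eqref{equationAdditive2} is essentially the paper's own argument: take a word $bw'$ of maximal length in the finite $X$-maximal suffix code $U$, use maximality to show that the complete bundle $Aw' \cap \LL(X)$ lies in $U$ (a word $aw' \in \LL(X)$ cannot be a proper suffix of an element of $U$ because $bw'$ has maximal length, and an element of $U$ that is a proper suffix of $aw'$ would be a proper suffix of $bw' \in U$), replace this bundle by $w'$, and invoke neutrality of $w'$ (valid since $|w'| \ge m$) together with~\eqref{eqLeftAdditive} to see that $\rho_X$ is unchanged; induction on the total length then terminates at $\LL_m(X)$. This part is correct, and the bookkeeping you flag as the hard part (completeness of the bundle, the new code being again a suffix code and again $X$-maximal) works out exactly as you anticipate.

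The genuine gap is in the first assertion, the finiteness of $\rho_X(U)$ for an \emph{arbitrary} suffix code $U \subset \LL(X)$. Your argument is: the right-special words of each length are bounded in number (by Propositions~\ref{propositionCassaigne} and~\ref{propositionComplexity}) with bounded excess, and a suffix code contains at most finitely many words sharing a given suffix, hence the sum converges. Neither step holds up. The claim about suffix codes is false in general (the words $ba^nb$, $n \ge 1$, form an infinite suffix code all of whose elements share the suffix $b$), and even granting it, a bound on the number of right-special words \emph{of each fixed length} does not bound their total number in $U$: if $U$ contained one right-special word of every length, each of excess $1$, then $\rho_X(U)$ would diverge even though each length contributes at most the eventually constant value $s_n(X)$. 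What is missing is a bound on the \emph{total} number of right-special words a suffix code can contain; one way to get it is the symmetric version of Proposition~\ref{propositionSG}, which shows that for $n$ large enough the right-special words of length at least $n$ form finitely many chains totally ordered by the suffix relation, so that a suffix code meets each chain at most once. Alternatively (and this is the paper's route) finiteness comes out of the very induction you already set up: prove for \emph{every} finite suffix code $U$, not only the $X$-maximal ones, the inequality $\rho_X(U) \le \rho_X(U_m)$, where $U_m$ consists of the words of $U$ of length less than $m$ together with the length-$m$ suffixes of the remaining words of $U$; when the bundle $U \cap Aw'$ is incomplete, replacing it by $w'$ can only increase $\rho_X$, again by~\eqref{eqLeftAdditive}. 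One then passes to the supremum over finite subsets to handle infinite $U$. As it stands, your proposal establishes the second assertion of the proposition but not the first.
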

\begin{proof}
For any suffix code $U\subset \LL(X)$, let $U_m$ be the union 
$$U_m = \left( U \cap \LL_{<m}(X) \right) \cup \left( \LL_m(X) \cap S \right),$$
where $S$ is the set of words which are suffixes of some words of $U$.
Note that $U_m$ is a finite suffix code.
It is equal to $\LL_m(X)$ is $U$ is $X$-maximal and contained in $\LL_{\ge m}(X)$.

Assume first that $U\subset\LL(X)$ is a finite suffix code.
We prove the  by induction on the sum $\ell(U)$ of the lengths of the words of $U$ that

\begin{equation}
\rho_X(U) \le \rho_X(U_m)
\mbox{ with equality if }
U
\mbox{ is $X$-maximal and }
U \subset\LL_{\ge m}(X).
\label{eqInter}
\end{equation}

If all words of $U$ are of length at most $m$, then $U\subset U_m$ with equality if $U$ is $X$-maximal and $U\subset \LL_{\ge m}(X)$, since in this case $U_m = U = \LL_m(X)$.
Thus Equation~\eqref{eqInter} holds.
Otherwise, let $u \in U$ be of maximal length.
Set $u=av$ with $a \in A$.
Then $Av \cap \LL(X)\subset U$.
Set $U' = (U \setminus Av) \cup \{ v \}$.
Thus $U'$ is a suffix code with $\ell(U') < \ell(U)$ which is $X$-maximal if $U$ is $X$-maximal.
We have the inclusion $U\subset (U' \setminus v) \cup L_1(v)v$ with equality if $U$ is $X$-maximal.
Since $v$ is neutral, we have, by Equation~\eqref{eqLeftAdditive},
$$
\rho_X(U)  \le  \rho_X(U') -\rho_X(v) + \sum_{a \in L_1(v)} \rho_X(av) = \rho_X(U')
$$
with equality if $U$ is $X$-maximal.
By induction hypothesis, Equation~\eqref{eqInter} holds for $U'$.
Thus $\rho_X(U) \le \rho_X(U_m)$.
If $U$ is $X$-maximal and $U \subset \LL_{\ge m}(X)$, then $\rho_X(U) = \rho_X(\LL_m(X))$ since $U'_m = U_m = \LL_m(X)$.
Thus Equation~\eqref{eqInter} is proved.

If $U$ is infinite, then $\rho_X(U)$ is the supremum of the values of $\rho_X(V)$ on the finite subsets $V$ of $U$ and thus it is bounded by Equation~\eqref{eqInter}.
\end{proof}

\begin{proofof}{of Theorem~\ref{theoremReturn}}
Consider a word $w \in \LL(X)$ and let $P$ be the set of proper prefixes of $\CR(w)$.
For $p \in P$, denote $\alpha(p) = \Card\{ a \in A \mid pa \in P \cup \CR(w)\}-1$.
Then $\CR(w)$ is finite if and only if $P$ is finite.
Moreover in this case, since $\CR(w)$ is a prefix code, we have by a well known property of trees
\begin{equation}
\Card(\CR(w)) = \alpha(P)+1,
\label{equationCard}
\end{equation}
where $\alpha(P) = \sum_{p \in P} \alpha(p)$.

Let $U$ be the set of words in $P$ which are not proper prefixes of $w$.
We claim that $U$ is an $X$-maximal suffix code.

Indeed, if $u,vu \in U$, then $w$ is a proper prefix of $u$ and thus is an internal factor of $vu$, a contradiction unless $v=\varepsilon$.
Thus $U$ is suffix.

Consider $r \in \LL(X)$.
Then, since $\LL(X)$ is recurrent, there is some $s \in \LL(X)$ such that $wsr \in \LL(X)$. 
Let $u$ be the shortest prefix of $wsr$ which has a proper suffix equal to $w$ .
Then $u \in U$.
This shows that $U$ is an $X$-maximal suffix code.

We have $\alpha(p)=0$ for any proper prefix $p$ of $w$ since any word in $\CR(w)$ has $w$ as a proper prefix.
Next we have $\alpha(p) = \rho_X(p)$ for any $p \in U$.
Indeed, if $ua \in \LL(X)$ for $u \in P$ and $a \in A$, then $ua \in \CR(w) \cup P$ since $\LL(X)$ is recurrent.
Thus we have $\alpha(P) = \rho_X(U)$.

By Proposition~\ref{propositionBound}, $\rho_X(U)$ is finite.
Therefore, Equation~\ref{equationCard} shows that $\Card(\CR(w)) = \Card(\RR(w))$ is finite.

Assume finally that $|w| \ge m$.
Then $U \subset \LL_{\ge m}(X)$ and thus, by Proposition~\ref{propositionBound}, we have $\rho_X(U)=\rho_X(\LL_{m}(X))$.
Thus we have
\begin{displaymath}
\alpha(P) = \rho_X(\LL_{m}(X)).
\end{displaymath}
By Equation~\eqref{equationCard}, this implies 
Equation~\eqref{eqReturn}.
\end{proofof}

It is known that for dendric shift spaces,  irreducibility is enough to guarantee minimality~\cite{DolcePerrin2017}.
We obtain as a direct corollary of Theorem~\ref{theoremReturn} that this still holds for eventually dendric shifts.

\begin{corollary}
\label{theoremMinimalDendric}
An eventually dendric shift space is minimal if and only if it is irreducible.
\end{corollary}
\begin{proof}
Let $X$ be an irreducible shift space.
By Theorem~\ref{theoremReturn}, the set $\RR(w)$ is finite for every $w \in \LL(X)$.
Thus $X$ is minimal.
\end{proof}

Note that the proof shows that Theorem~\ref{theoremMinimalDendric} holds for the more general class of shift spaces which are \emph{eventually neutral}, in the sense that there is an integer $m$ such that every word of length at least $m$ is neutral.
This class includes the shift spaces $X$ such that $\LL(X)$ is neutral with characteristic $c$ introduced in \cite{DolcePerrin2017} and for which Theorem~\ref{theoremMinimalDendric} is proved in~\cite{DolcePerrin2017} with a similar proof.

Note also that Theorem~\ref{theoremMinimalDendric} shows that in a minimal eventually dendric shift space the cardinality of complete return words is bounded.
There exist minimal shift spaces which do not have this property (see~\cite[Example 3.17]{DurandLeroyRichomme2013}).

\section{Generalized extension graphs}
\label{sectionGeneralized}

We will now see how the conditions on extension graphs can be generalized to graphs expressing the extension by words having different length.

We will need the following notions.
Let $X$ be a shift space on an alphabet $A$.

A set $U \subset \LL(X)$ is said to be \emph{right $X$-complete} (resp. \emph{left $X$-complete}) if any long enough word of $\LL(X)$ has a prefix (resp. suffix) in $U$.

It is not difficult to show that a prefix code (resp. a suffix code) $U \subset \LL(X)$ is $X$-maximal if and only if it is right $X$-complete (resp. left $X$-complete) (see~\cite[Propositions 3.3.1 and 3.3.2]{BerstelDeFelicePerrinReutenauerRindone2012}).

For $U,V \subset A^*$ and $w \in \LL(X)$, let
$$
L_U(w) = \{ u \in U \mid uw \in \LL(X) \} \quad \mbox{ and } \quad R_V(w) = \{ v \in V \mid wv \in \LL(X) \}.
$$

Let $U \subset A^*$ (resp. $V \subset A^*$) be an suffix code (resp. prefix code) and $w \in \LL(X)$ be such that $L_U(w)$ is an $X$-maximal suffix code (resp. $R_V(w)$ is an $X$-maximal prefix code).
The \emph{generalized extension graph} of $w$ relative to $U,V$ is the following undirected bipartite graph $\E_{U,V}(w)$.
The set of vertices is the disjoint union of $L_U(w)$ and $R_V(w)$.
The edges are the pairs $(u,v) \in L_U(w) \times R_V(w)$ such that $uwv \in \LL(X)$.
In particular $\E_n(w) = \E_{\LL_n(X), \LL_n(X)}(w)$.

\begin{proposition}
\label{propositionE_n}
For every $n \ge 1$ and $m \ge 0$, the graph $\E_n(w)$ is a tree for all $w \in \LL_{\ge m}(X)$ if and only if $\E_{n+1}(w)$ is a tree for all words $w \in \LL_{\ge m}(X)$.
\end{proposition}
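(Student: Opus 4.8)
The backbone of my proof would be two ``peel off one letter'' descriptions of $\E_{n+1}(w)$. Writing each left vertex of length $n+1$ as $ua$ with $a\in L_1(w)$ the letter adjacent to $w$ and $u\in L_n(aw)$, and each right vertex as $dz$ with $d\in R_1(w)$ and $z\in R_n(wd)$, one checks directly that $uawdz\in\LL(X)$ holds if and only if $(a,d)$ is an edge of $\E_1(w)$ and $(u,z)$ is an edge of $\E_n(awd)$. Thus $\E_{n+1}(w)$ is obtained from the base graph $\E_1(w)$ by replacing each edge $(a,d)$ with the bipartite graph $\E_n(awd)$, the fibres over a common vertex being glued along the shared index sets $L_n(aw)$ and $R_n(wd)$ (call this Decomposition~I). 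Peeling instead the letters far from $w$ gives the dual description: $\E_{n+1}(w)$ is obtained from the base $\E_n(w)$ by replacing each edge $(y,t)$ with $\E_1(ywt)$ (Decomposition~II). In both cases there is a natural projection from $\E_{n+1}(w)$ onto the base which is a surjective morphism of bipartite graphs preserving the two sides.

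Two elementary observations about these decompositions do most of the work for the implication $\E_{n+1}\Rightarrow\E_n$. First, since the projection is surjective and edge preserving, the base is connected whenever $\E_{n+1}(w)$ is; applying this to Decomposition~II shows that $\E_n(w)$ is connected as soon as $\E_{n+1}(w)$ is a tree. Second, each fibre of Decomposition~I is an \emph{induced} subgraph of $\E_{n+1}(w)$ (the only edges between the block of $a$ and the block of $d$ are those of $\E_n(awd)$), so if $\E_{n+1}(w)$ is a tree then every $\E_n(awd)$ is acyclic. Combining the two, for any word $u$ with $|u|\ge m+2$ I would write $u=awd$ and obtain that $\E_n(u)$ is connected (projection of the tree $\E_{n+1}(u)$) and acyclic (induced subgraph of the tree $\E_{n+1}(w)$, $w$ being the interior of $u$), hence a tree; the finitely many lengths $m$ and $m+1$ are then settled by the multiplicity count $b_N$ of Proposition~\ref{propositionCassaigne} together with the connectivity just obtained.

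For the reverse implication $\E_n\Rightarrow\E_{n+1}$ I would use Decomposition~I: its fibres are the graphs $\E_n(awd)$ with $|awd|\ge m$, hence trees by hypothesis, while the base $\E_1(w)$ is a tree once the equivalence has been reduced to order one (the induction described below). The remaining, and genuinely delicate, point is the \emph{amalgamation step}: to pass from ``the base is a tree and every fibre is a tree'' to ``$\E_{n+1}(w)$ is a tree.'' This is where I expect the main obstacle to lie, because the statement is simply false for arbitrary graphs: blowing up a base edge can disconnect the total graph, when two fibres attached to a common base vertex use disjoint index sets, and it can create a cycle, when two vertices acquire two common neighbours lying in different fibres. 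Neither pathology is excluded by the tree-ness of a single extension graph.

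The resolution must therefore exploit that we assume $\E_k(u)$ to be a tree for \emph{every} word $u$ of length at least $m$, not merely for the word at hand. Concretely, the overlap of the fibres $\E_n(awd)$ and $\E_n(awd')$ attached to a common left vertex $a$ is controlled by the joint extensions of the longer word $aw$, and the absence of the two ``doubling'' configurations above is precisely a tree condition on such mixed extension graphs. I would package this as an induction on $n$ that reduces the equivalence to the case of order one and simultaneously propagates, along with tree-ness, the connectivity statements needed to guarantee that fibres sharing a base vertex always meet. Carrying out this bookkeeping—showing that the global hypothesis forces the fibre overlaps and forbids the cross-fibre cycles—is the crux of the argument; once it is in place, the two implications above combine to give the stated equivalence.
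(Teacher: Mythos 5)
Your downward implication ($\E_{n+1}$ trees $\Rightarrow$ $\E_n$ trees) is essentially sound and is a genuinely different, more elementary route than the paper's: connectivity of $\E_n(u)$ for all $|u|\ge m$ via the surjective projection of your Decomposition~II, and acyclicity for $|u|\ge m+2$ via the induced-fibre observation of Decomposition~I, are both correct verifications. One caveat: the residual lengths $m$ and $m+1$ cannot be settled by Proposition~\ref{propositionCassaigne} as stated, since its count $b_N$ concerns $1$-letter multiplicities, about which your hypothesis says nothing when $n\ge 2$. You need the $n$-letter analogue $\sum_{|u|=N}\bigl(e_n(u)-\ell_n(u)-r_n(u)+1\bigr)=p_{N+2n}(X)-2p_{N+n}(X)+p_N(X)$, and then a further argument playing the vanishing of these sums for order $n+1$ (all $N\ge m$) against their vanishing for order $n$ (all $N\ge m+2$); this forces $s_N(X)$ to be constant for $N\ge m$, whence the sums at $N=m,m+1$ vanish, and connectivity (which gives each summand $\ge 0$) finishes. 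This can be carried out, but it is a real argument, not a citation.

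The genuine gap is the upward implication. You reduce $\E_n\Rightarrow\E_{n+1}$ to the amalgamation step ``base $\E_1(w)$ a tree and every fibre $\E_n(awd)$ a tree implies $\E_{n+1}(w)$ a tree,'' observe correctly that this is false for abstract edge blow-ups, and then explicitly leave the resolution (``carrying out this bookkeeping'') undone. Since, as you say, this is the crux, your proposal does not prove this direction. It is exactly the point where the paper invests its technical work: Lemma~\ref{lemmaMonatsHefte} shows that, inside an $X$-maximal suffix code $U$, exchanging the subset $A\ell$ for the single word $\ell$ preserves tree-ness of the generalized extension graph $\E_{U,V}(w)$, \emph{provided} the mixed graph $\E_{A,V}(\ell w)$ is a tree; Lemma~\ref{lemmaFrancesco} supplies that proviso from the global hypothesis applied to the longer word $\ell w$; and the proof of Proposition~\ref{propositionE_n} then iterates this exchange word by word to pass from $\LL_n(X)$ to $\LL_{n+1}(X)$ on each side. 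The ``mixed extension graphs'' controlling fibre overlaps that you gesture at are precisely these graphs $\E_{A,V}(\ell w)$ and $\E_{U,V}(w)$, and the content of the exchange lemma is a delicate uniqueness-of-reduced-paths surgery through the cluster of vertices $A\ell$. So you have correctly located the missing lemma, but locating it is not proving it: as written, your argument establishes only one of the two implications.
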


The proof uses the following statement.
The only if part is~\cite[Lemmas 3.8 and 3.10]{BertheDeFeliceDolceLeroyPerrinReutenauerRindone2015}.

\begin{lemma}
\label{lemmaMonatsHefte}
Let $X$ be a shift space and let $w \in \LL(X)$.
Let $U\subset \LL(X)$ be a finite $X$-maximal suffix code and let $V\subset \LL(X)$ be finite $X$-maximal prefix code.
Let $\ell \in \LL(X)$ be such that $A \ell \cap \LL(X) \subset U$ and such that $\E_{A,V}(\ell w)$ is a tree.
Set $U' = ( U \setminus A\ell ) \cup \{ \ell \}$.
The graph $\E_{U',V}(w)$ is a tree if and only if the graph $\E_{U,V}(w)$ is a tree.
\end{lemma}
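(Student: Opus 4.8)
The plan is to realise both $\E_{U',V}(w)$ and $\E_{U,V}(w)$ as one fixed common graph with two different gadgets glued on, and then to show that swapping one gadget for the other changes neither the number of connected components nor the quantity $\Card(\text{vertices})-\Card(\text{edges})$. Since a graph is a tree precisely when it is connected and acyclic, and these two invariants together detect that, the equivalence will follow. First I would dispose of the degenerate case: if $\ell w\notin\LL(X)$, then $\ell\notin L_{U'}(w)$ and no word $a\ell$ can lie in $L_U(w)$ (as $a\ell w\in\LL(X)$ would force $\ell w\in\LL(X)$), so the two graphs coincide and there is nothing to prove. Hence I assume $\ell w\in\LL(X)$, which, using that $V$ is $X$-maximal and so right $X$-complete, guarantees $R_V(\ell w)\neq\emptyset$.

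Next I would set up the decomposition. Write $W=(U\setminus A\ell)\cap L_U(w)$, and note that $W=(U\setminus A\ell)\cap L_{U'}(w)$ as well; let $H$ be the bipartite graph on $W\sqcup R_V(w)$ whose edges are the pairs $(u,v)$ with $uwv\in\LL(X)$. Then $\E_{U',V}(w)$ is $H$ together with one extra left vertex $\ell$ joined to every $v\in R_V(\ell w)$ (a star, since $\ell wv\in\LL(X)$ iff $v\in R_V(\ell w)$), while $\E_{U,V}(w)$ is $H$ together with the extra left vertices $\{a\ell: a\ell w\in\LL(X)\}$, all of which lie in $U$ by the hypothesis $A\ell\cap\LL(X)\subset U$, with $a\ell$ joined to $v$ iff $a\ell wv\in\LL(X)$. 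The crucial bookkeeping point, which I would check first, is that every neighbour of an $a\ell$ lies in $R_V(\ell w)$ (because $a\ell wv\in\LL(X)\Rightarrow\ell wv\in\LL(X)$), so that the subgraph of $\E_{U,V}(w)$ induced on $\{a\ell\}\sqcup R_V(\ell w)$ is, via $a\ell\mapsto a$, exactly $\E_{A,V}(\ell w)$, which is a tree by hypothesis.

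With this in hand the counting is immediate. Writing $k=\Card(L_A(\ell w))$, $d=\Card(R_V(\ell w))$ and $e=\Card(E_{A,V}(\ell w))$, the tree hypothesis gives $e=k+d-1$. Passing from $\E_{U',V}(w)$ to $\E_{U,V}(w)$ replaces the single vertex $\ell$ and its $d$ edges by $k$ vertices and $e$ edges, so the change in $\Card(\text{vertices})-\Card(\text{edges})$ is $(k-e)-(1-d)=(k+d-e)-1=0$; thus this quantity agrees for the two graphs. For the number of connected components I would use that in both graphs the gadget glued to $H$ (the star on $\ell$, respectively the copy of $\E_{A,V}(\ell w)$) is connected and contains all of $R_V(\ell w)$: consequently, in either graph the components are exactly those components of $H$ disjoint from $R_V(\ell w)$, plus one further component merging the gadget with all components of $H$ meeting $R_V(\ell w)$. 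This count depends only on $H$ and $R_V(\ell w)$, not on which gadget is used, so the two graphs have equally many components.

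Finally, since $\Card(\text{vertices})-\Card(\text{edges})$ equals (number of components) minus (cyclomatic number), the two graphs also share the same cyclomatic number. As a graph is a tree exactly when it has one component and cyclomatic number $0$, and both invariants agree for $\E_{U',V}(w)$ and $\E_{U,V}(w)$, one is a tree iff the other is. I expect the main obstacle to be the connectivity step, namely establishing rigorously that the component count is independent of the gadget, together with the preliminary identification of the induced subgraph on the $a\ell$-vertices with the tree $\E_{A,V}(\ell w)$; the vertex/edge count itself is routine once the tree identity $e=k+d-1$ is invoked.
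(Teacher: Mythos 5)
Your proof is correct, and it takes a genuinely different route from the paper's. The paper proves only the implication that $\E_{U,V}(w)$ being a tree forces $\E_{U',V}(w)$ to be a tree, citing \cite[Lemmas 3.8 and 3.10]{BertheDeFeliceDolceLeroyPerrinReutenauerRindone2015} for the converse; its argument is path-theoretic: using that the left vertices $A\ell$ form a cluster whose induced subgraph together with $R_V(\ell w)$ is a copy of the connected graph $\E_{A,V}(\ell w)$, it produces a unique reduced path in $\E_{U',V}(w)$ between any two right vertices by collapsing onto $\ell$ the excursions through $A\ell$ of the corresponding path in $\E_{U,V}(w)$. You instead fix the common subgraph $H$ and compare the two gadgets glued onto it (the star at $\ell$ versus the copy of $\E_{A,V}(\ell w)$) through two invariants: the number of connected components, unchanged because both gadgets are connected and meet $H$ exactly along $R_V(\ell w)$, and the difference between the numbers of vertices and edges, unchanged by the tree identity $e=k+d-1$. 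This yields a self-contained proof of both implications at once --- indeed of the stronger fact that the two graphs are simultaneously connected and simultaneously acyclic, which is essentially the content of the two cited lemmas. Your supporting checks (the degenerate case $\ell w\notin\LL(X)$, the non-emptiness of $R_V(\ell w)$ via right $X$-completeness of $V$, and that every neighbour of a vertex $a\ell$ lies in $R_V(\ell w)$) are all sound; the only detail left implicit is that $W$ is disjoint from $\{\ell\}$, i.e.\ that $\ell\notin U$, which holds because $U$ is a suffix code containing some word $a\ell$ (such a word exists in $\LL(X)$ by extendability). In exchange for this bookkeeping, your argument avoids the delicate reduced-path surgery and uniqueness analysis that occupies most of the paper's proof.
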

\begin{proof}
We need only to prove the if part.

First, note that the hypothesis that $\E_{A,V}(\ell w)$ is a tree guarantees that the left vertices $A\ell$ in $\E_{U,V}(w)$ are clusterized: for any pair of vertices $a\ell, b\ell$ there exists a unique reduced path from $a\ell$ to $b\ell$ in $\E_{U,V}(w)$ using as left vertices only elements of $A\ell$.
Indeed, such a path exists since the subgraph $\E_{A\ell,V}(w)$ of $\E_{U,V}(w)$ is isomorphic to $\E_{A,V}(\ell w)$ that is connected.
Since $\E_{U,V}(w)$ is a tree, this path is unique.

Let $v, v' \in R_V(w)$ be two distinct vertices and let $\pi$ be the unique reduced path from $v$ to $v'$ in $\E_{U,V}(w)$.
We show that we can find a unique reduce path $\pi'$ from $v$ to $v'$ in $\E_{U',V}(w)$.

If $\pi$ does not pass by $A\ell$, we can simply define $\pi'$ as a path passing by the same vertices than $\pi$.
Otherwise, we can decompose $\pi$ in a unique way as a concatenation of a path $\pi_1$ from $v$ to a vertex in $A\ell$ not passing by $A\ell$ before, followed by a path from $A\ell$ to $A\ell$ (using on the left only vertices from $A\ell$) and a path $\pi_2$ from $A\ell$ to $v'$ without passing in $A\ell$ again.
We consider in $\E_{U',V}(w)$ the unique path $\pi_1'$ from $v$ to $\ell$ obtained by replacing the last vertex of $\pi_1$ by $\ell$ and the unique reduced path $\pi_2'$ from $\ell$ to $v'$ obtained by replacing the first vertex of $\pi_2$ by $\ell$.
In this case we define $\pi'$ as the concatenation of $\pi_1'$ and $\pi_2'$.

The reduced path $\pi'$ is unique.
Indeed, let us suppose that we have a different path $\pi^*$ from $v$ to $v'$ in $\E_{U',V}(w)$.
If $\pi^*$ does not pass (on the left) by $\ell$ then we would find a path having the same vertices in $\E_{U,V}(w)$ which is impossible since the graph is acyclic.
Let us suppose that both $\pi'$ and $\pi^*$ passes by $\ell$.
Without loss of generality let us suppose that we have a cycle in $\E_{U',V}(w)$ passing by $\ell$ and $v$ (the case with $v'$ being symmetric).
Let us define by $\pi_0'$ and $\pi_0^*$ the two distinct subpaths of $\pi'$ and $\pi^*$ respectively going from $v$ to $\ell$.
Since $\LL(X)$ is biextendable, we can find $a\ell, b\ell \in U$, with $a,b \in A$ not necessarily distinct, and two reduced paths $\pi_1$ from $v$ to $a\ell$ and and $\pi_2$ from $v$ to $b\ell$ in $\E_{U,V}(w)$ obtained from $\pi_0'$ and $\pi_0^*$ by replacing the vertex $\ell$ by $a\ell$ and $b\ell$ respectively.
From the remark at the beginning of the proof we know that we can find a reduced path in $\E_{U,V}(w)$ from $a\ell$ to $b\ell$.
Thus we can find a nontrivial cycle in $\E_{U,V}(w)$, which contradicts the acyclicity of the graph.
\end{proof}

A symmetric statement holds for $r \in \LL(X)$ such that $rA \cap \LL(X) \subset V$ and $\E_{U,A}(wr)$ is a tree, with $V' = ( V \setminus rA ) \cup \{ r  \}$: the graph $\E_{U,V}(w)$ is a tree if and only if $\E_{U,V'}(w)$ is a tree. 

\begin{lemma}
\label{lemmaFrancesco}
Let $n\ge 1$, let $m\ge 0$ and let $V$ be a finite $X$-maximal prefix code.
If $\E_{\LL_n(X),V}(w)$ is a tree for every $w \in \LL_{\ge m}(X)$ then
for each word
$\ell\in \LL_{\ge n-1}(X)$, the graph $\E_{A,V}(\ell w)$ is a tree. 
\end{lemma}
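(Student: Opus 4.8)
The plan is to establish the two defining properties of a tree---acyclicity and connectedness---separately, by comparing $\E_{A,V}(\ell w)$ with two \emph{different} graphs to which the hypothesis applies. Throughout I write $\ell = ps$, where $p$ is the prefix of $\ell$ of length $n-1$ and $s$ is the complementary suffix, of length $|\ell|-(n-1)$; both are well defined since $|\ell|\ge n-1$. I may assume $w\in\LL_{\ge m}(X)$, so that both $sw$ and $\ell w$ lie in $\LL_{\ge m}(X)$ and the hypothesis applies to each of them.

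For acyclicity, I would realize $\E_{A,V}(\ell w)$ as an induced subgraph of the tree $\E_{\LL_n(X),V}(sw)$. The map sending a left vertex $a\in L_A(\ell w)$ to the word $ap$ is injective, and since $|ap|=n$ and $ap\,sw = a\ell w$, it sends $a$ to a genuine left vertex of $\E_{\LL_n(X),V}(sw)$; on the right it is the identity. Because $ap\,sw\,v\in\LL(X)$ if and only if $a\ell w v\in\LL(X)$, edges correspond exactly, and the right vertices that occur are precisely the neighbours of the image left vertices. Hence $\E_{A,V}(\ell w)$ is isomorphic to the subgraph of $\E_{\LL_n(X),V}(sw)$ induced on the words $ap$ together with their neighbours; being an induced subgraph of a tree, it is acyclic.

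For connectedness, I would exhibit a surjective graph homomorphism $\theta$ from the tree $\E_{\LL_n(X),V}(\ell w)$ onto $\E_{A,V}(\ell w)$: send a length-$n$ left vertex $u$ to its last letter (the letter adjacent to $\ell w$) and fix every right vertex. Since $a\ell w$, respectively $a\ell w v$, is a factor of $u\ell w$, respectively $u\ell w v$, the map is a well-defined homomorphism preserving the bipartition, and biextendability of $\LL(X)$ shows that it is onto both on vertices and on edges. As the homomorphic image of a connected graph is connected and $\E_{\LL_n(X),V}(\ell w)$ is a tree, the graph $\E_{A,V}(\ell w)$ is connected. Combining the two steps, it is a tree.

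The subtle point is the acyclicity step. A naive attempt to embed $\E_{A,V}(\ell w)$ into $\E_{\LL_n(X),V}(w)$ fails as soon as $|\ell|>n-1$, because a single left letter cannot determine a length-$n$ block. The correct device is to split $\ell=ps$ with $|p|=n-1$, absorb the extra letter into the prefix $ap$, and shift the base word to the suffix $sw$; this is exactly what makes $a\mapsto ap$ injective and edge-preserving. Once the two auxiliary graphs $\E_{\LL_n(X),V}(sw)$ and $\E_{\LL_n(X),V}(\ell w)$ are correctly identified, both steps reduce to routine checks that factors and left-extensions behave as required.
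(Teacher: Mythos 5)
Your proof is correct and takes essentially the same route as the paper's: connectedness by viewing $\E_{A,V}(\ell w)$ as the surjective homomorphic image (identification of length-$n$ left vertices sharing a last letter) of the tree $\E_{\LL_n(X),V}(\ell w)$, and acyclicity by splitting $\ell=\ell'\ell''$ with $|\ell'|=n-1$ and embedding $\E_{A,V}(\ell w)$ as a subgraph of the tree on the shifted base word --- exactly your decomposition $\ell=ps$ and the embedding $a\mapsto ap$ into $\E_{\LL_n(X),V}(sw)$. If anything, your acyclicity step is phrased more carefully than the paper's, which writes the ambient graph as $\E_n(\ell''w)$ where $\E_{\LL_n(X),V}(\ell''w)$ is what is meant.
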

\begin{proof}
The graph $\E_{A,V}(\ell w)$ 
is obtained from $\E_{\LL_n(X),V}(\ell w)$ by identifying the vertices of $L_n(\ell w)$ ending with
the same letter. Since $\E_{\LL_n(X),V}(\ell w)$ is connected, $\E_{A,V}(\ell v)$ is also connected. 

Set 
$\ell=\ell'\ell''$ with $|\ell'|=n-1$. The graph $\E_{A,V}(\ell w)$
is isomorphic to $\E_{A\ell',V}(\ell''w)$ which is
a subgraph of $\E_n(\ell'' w)$ and thus it is acyclic. 

Thus $\E_{A,V}(\ell w)$ is a tree.
\end{proof}
A symmetric statement holds for $n\ge 1$ and $U$ a finite $X$-maximal suffix code:
If $\E_{U,\LL_n(X)}(w)$ is a tree for every $w \in \LL_{\ge m}(X)$ if and only
if $\E_{U,A}(wr)$ is a tree for every $r\in \LL_{\ge n-1}(X)$ and $w \in \LL_{\ge m}(X)$.

\begin{proofof}{of Proposition~\ref{propositionE_n}}
We proceed in several steps.

\noindent{Step 1.}
Assume first that $\E_n(w)$ is tree for every word $w \in \LL_{\ge m}(X)$. 
We fix some $w \in \LL_{\ge m}(X)$.

\noindent{Step 1.1}
We claim that for any finite $X$-maximal suffix code $U$
formed of words of length $n$ or $n+1$,
the graph $\E_{U,\LL_n(X)}(w)$ is a tree
by induction on $\gamma_{n+1}(U)=\Card(L_U(w)\cap A^{n+1})$.

The property is true for $\gamma_{n+1}(U) = 0$, since then $\E_{U,\LL_n(X)}(w) = \E_n(w)$.
Assume now that $\gamma_{n+1}(U) > 0$.
Let $a\ell$ with $a \in A$ be a word of length $n+1$ in $L_U(w)$.
Since $U$ is an $X$-maximal suffix code with words of length $n$ or $n+1$, we have $A\ell \cap \LL(X)\subset U$.
Let us consider $U' = (U \setminus A\ell) \cup \{ \ell \}$.
Since $\gamma_{n+1}(U') < \gamma_{n+1}(U)$, by induction hypothesis the graph $\E_{U',\LL_n(X)}(w)$ is a tree. Moreover, by Lemma~\ref{lemmaFrancesco}, the graph $\E_{A,\LL_n(X)}(\ell w)$ is a tree.

Thus, by assertion 1 of Lemma~\ref{lemmaMonatsHefte}, the graph $\E_{U,\LL_n(X)}(w)$ is a tree.
This proves the claim.

\noindent{Step 1.2}
We now claim that for any finite $X$-maximal prefix code $V$ formed of words of length $n$ or $n+1$, the graph $\E_{\LL_{n+1}(X),V}(w)$ is a tree by induction on $\delta_{n+1}(V) = \Card(R_V(w) \cap A^{n+1})$.

The property is true for $\delta_{n+1}(V) = 0$, since the graph $\E_{\LL_{n+1}(X),V}(w) = \E_{\LL_{n+1}(X),\LL_n(X)}(w)$, is a tree by Step 1.1.
Assume now that $\delta_{n+1}(V) > 0$.
Let $ra$ with $a \in A$ be a word of length $n+1$ in $R_V(w)$.
Since $V$ is an $X$-maximal prefix code with words of length $n$ or $n+1$, we have $rA \cap \LL(X)\subset U$.
Let us consider $V' = (V \setminus rA) \cup \{ r \}$.
Since $\delta_{n+1}(V') < \delta_{n+1}(V)$, by induction hypothesis the graph $\E_{\LL_{n+1}(X),V'}(w)$ is a tree.
Moreover, by the symmetric version of Lemma~\ref{lemmaFrancesco}, the graph $\E_{\LL_{n+1}(X),A}(wr)$ is a tree.
This proves the claim.

Since $\E_{n+1}(w)=\E_{\LL_{n+1}(X),\LL_{n+1}(X)}(w)$, we conclude that $\E_{n+1}(w)$ is a tree.

\noindent{Step 2}
Assume now that $\E_{n+1}(w)$ is a tree for every $w \in \LL_{\ge m}(X)$.
Fix some $w \in \LL_{\ge m}(X)$.

\noindent{Step 2.1}
We first claim that $\E_{U,\LL_{n+1}(X)}$ is a tree for every $X$-maximal suffix code $U$ formed of words of length $n$ or $n+1$ by induction on $\gamma_n(U) = \Card( L_U(w) \cap A^{n})$.

The property is true if $\gamma_n(U)=0$, since then $\E_{U,\LL_{n+1}(X)}(w)=\E_{n+1}(w)$.

Assume next that $\gamma_n(U) > 0$.
Let $\ell \in L_U(w) \cap A^n$.
Set $W = (U \setminus \{ \ell \}) \cup A\ell$ or equivalently $U = (W \setminus A\ell)\cup \{ \ell \}$.
Then $\delta_n(W) < \delta_n(U)$ and consequently $\E_{W,\LL_{n+1}(X)}(w)$ is a tree by induction hypothesis.
On the other hand, by Lemma~\ref{lemmaFrancesco}, the graph $\E_{A,\LL_{n+1}(X)}(\ell w)$ is also a tree.
By Assertion 2 of Lemma~\ref{lemmaMonatsHefte}, the graph $\E_{U,\LL_{n+1}(X)}(w)$ is a tree and thus the claim is proved.

\noindent{Step 2.2}
We now claim that $\E_{\LL_n(X),V}(w)$ is a tree for every $X$-maximal prefix code $V$ formed of words of length $n$ or $n+1$ by induction on $\delta_n(V) = \Card(R_V(w) \cap A^{n})$.

The property is true if $\delta_n(V) = 0$ by Step 2.1.
Assume now that $\delta_n(V) > 0$.
Let $r \in R_V(w) \cap A^n$ and let $T = (V \setminus \{ r \}) \cup rA$ or equivalently $V = ( T \setminus rA) \cup \{ r \}$.
Then $\delta_n(T) < \delta_n(V)$ and thus $\E_{\LL_n(X),T}(w)$ is a tree by induction hypothesis.
On the other hand, by the symmetric version of Lemma~\ref{lemmaFrancesco}, the graph $\E_{\LL_n(X),A}(wr)$ is also a tree.
By Assertion 2 of Lemma~\ref{lemmaMonatsHefte}, the graph $\E_{\LL_n(X),T}(w)$ is a tree and thus the claim is proved.

Since $\E_n(w) = \E_{U,V}(w)$ for $U=V=\LL_n(X)$, it follows from the claim that $\E_n(w)$ is a tree.
\end{proofof}

The following result shows that in the definition of eventually dendric shift spaces, one can replace the graphs $\E_1(w)$ by $\E_n(w)$ with the same threshold.

\begin{theorem}
\label{theoremGeneralized}
Let $X$ be a shift space.
For every $m \ge 1$, the following conditions are equivalent.
\begin{enumerate}
 \item[\rm(i)] $X$  is eventually dendric with threshold $m$,
 \item[\rm(ii)] the graph $\E_n(w)$ is a tree for every $n\ge 1$ and every word $w \in \LL_{\ge m}(X)$,
 \item[\rm(iii)] there is an integer $n \ge 1$ such that $\E_n(w)$ is a tree for every word $w \in \LL_{\ge m}(X)$.
\end{enumerate}
\end{theorem}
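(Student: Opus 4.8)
The plan is to obtain the theorem as a direct consequence of Proposition~\ref{propositionE_n}, which already contains all the combinatorial work: it shows that, for a fixed threshold, the tree property of $\E_n$ and that of $\E_{n+1}$ are equivalent on $\LL_{\ge m}(X)$. The theorem is then just the packaging of this single inductive step into a chain of equivalences, so I expect no real obstacle beyond bookkeeping.

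First I would note that condition (i) is, by the definition of an eventually dendric shift with threshold $m$, exactly the statement that $\E_1(w)$ is a tree for every $w \in \LL_{\ge m}(X)$; this is the case $n=1$ of the property appearing in (ii). To prove (i) $\Rightarrow$ (ii) I would argue by induction on $n \ge 1$: the base case is (i) itself, and the inductive step is precisely the forward direction of Proposition~\ref{propositionE_n}, applied with the fixed threshold $m$ (legitimate, since that proposition holds for every $m \ge 0$). This yields that $\E_n(w)$ is a tree for all $n \ge 1$ and all $w \in \LL_{\ge m}(X)$, which is (ii).

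The implication (ii) $\Rightarrow$ (iii) is immediate, since (ii) supplies a suitable $n$ (for instance $n=1$) witnessing the existential claim in (iii). For (iii) $\Rightarrow$ (i) I would run the same induction downwards: if $\E_n(w)$ is a tree for all $w \in \LL_{\ge m}(X)$ for some $n \ge 1$, then the backward direction of Proposition~\ref{propositionE_n} gives the same property for $n-1$, and iterating down to $n=1$ recovers condition (i). This closes the cycle (i) $\Rightarrow$ (ii) $\Rightarrow$ (iii) $\Rightarrow$ (i), establishing the three-way equivalence.

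The only point demanding a little care is to keep the threshold $m$ fixed throughout and to invoke Proposition~\ref{propositionE_n} with that same $m$ in both the upward and downward inductions; because the proposition is stated uniformly in $m$, both inductions go through verbatim. The genuine difficulty of the theorem lives one level down, in the proof of Proposition~\ref{propositionE_n} and in Lemmas~\ref{lemmaMonatsHefte} and~\ref{lemmaFrancesco} on which it rests, rather than in the present argument.
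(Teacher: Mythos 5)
Your proposal is correct and follows essentially the same route as the paper: the paper likewise proves (i)~$\Rightarrow$~(ii) by ascending induction on $n$ via Proposition~\ref{propositionE_n}, treats (ii)~$\Rightarrow$~(iii) as immediate, and proves (iii)~$\Rightarrow$~(i) by descending induction with the same proposition, keeping the threshold $m$ fixed throughout. Your write-up merely makes explicit the base case and the uniformity in $m$ that the paper leaves implicit.
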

\begin{proof}
(i) $\Rightarrow$ (ii).
It is proved by ascending induction on $n$ using iteratively Proposition~\ref{propositionE_n}.

(ii) $\Rightarrow$ (iii).
It is obvious.

(iii) $\Rightarrow$ (i).
It is proved by descending induction on $n$ using Proposition~\ref{propositionE_n}.
\end{proof}

\section{Complete bifix decoding}
\label{sectionBifixDecoding}

Let $X$ be a shift space on an alphabet $A$. 
A subset of $\LL(X)$ is \emph{two-sided $X$-complete} if it is both left and right $X$-complete. 

A \emph{bifix code} is both a prefix code and a suffix code.
A bifix code $U \subset \LL(X)$ is $X$-maximal if it is not properly contained in a bifix code $V \subset \LL(X)$.
If a bifix code $U \subset \LL(X)$ is right $X$-complete (resp. left $X$-complete), it is an $X$-maximal bifix code since it is already an $X$-maximal
prefix code (resp. suffix code).
It can be proved conversely that if $X$ is irreducible, a finite bifix code is $X$-maximal if and only if it is two-sided $X$-complete (see~\cite[Theorem 4.2.2]{BerstelDeFelicePerrinReutenauerRindone2012}).
This is not true in general, as shown by the following example.

\begin{example}
Let $X$ be the shift space such that $\LL(X) = a^*b^*$.
The set $U = \{ aa,b \}$ is an $X$-maximal bifix code.
Indeed, it is a bifix code and it is left $X$-complete as one may verify.
However it is not right $X$-complete since no word in $ab^*$ has a prefix in $U$.
\end{example}

Let $X$ be a shift space and let $U$ be a two-sided $X$-complete finite bifix code.
Let $\varphi: B \rightarrow U$ be a coding morphism for $U$, that is, a bijection from an alphabet $B$ onto $U$ extended to a morphism from $B^*$ into $A^*$.
Then $\varphi^{-1}(\LL(X))$ is factorial and, since $U$ is two-sided complete, it is extendable. 
Thus it is the language of a shift space called the \emph{complete bifix decoding} of $X$ with respect to $U$.

For example, for any $n\ge 1$, the set $\LL_n(X)$ is a two-sided complete bifix code and the corresponding complete bifix decoding is the decoding
of $X$ by non-overlapping $n$-blocks.
It can be identified with the dynamical system $(X,\sigma^n)$.

In~\cite[Theorem 3.13]{BertheDeFeliceDolceLeroyPerrinReutenauerRindone2015} it is proved that the maximal bifix decoding of an irreducible dendric shift space is a dendric shift space.
Actually, the hypothesis that $X$ is irreducible is only used to guarantee that the $X$-maximal bifix code used for the decoding is also an $X$-maximal prefix code and an $X$-maximal suffix code.
In the definitions used here of a maximal bifix decoding, we do not need this hypothesis.

\begin{theorem}
\label{theoremCompleteBifixDecoding}
Any complete bifix decoding of an eventually dendric shift space is an eventually dendric shift space having the same threshold.
\end{theorem}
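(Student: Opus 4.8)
The plan is to reduce everything to the generalized extension graphs of $X$ introduced in Section~\ref{sectionGeneralized} and then to run the transformation machinery of Proposition~\ref{propositionE_n}. Let $\varphi : B \to U$ be the coding morphism, so that $\LL(Y) = \varphi^{-1}(\LL(X))$, and let $m$ be the threshold of $X$. First I would record that, since $U$ is two-sided $X$-complete, it is left $X$-complete and right $X$-complete, hence simultaneously a finite $X$-maximal suffix code and a finite $X$-maximal prefix code. Put $N = \max_{u \in U}|u|$.

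The conceptual key is an identification of the extension graphs of $Y$ with generalized extension graphs of $X$. For a nonempty $y \in \LL(Y)$, set $w = \varphi(y) \in \LL(X)$. Because $\varphi$ is a bijection from $B$ onto $U$, a letter $a \in B$ satisfies $ay \in \LL(Y)$ iff $\varphi(a)w \in \LL(X)$, i.e. iff $\varphi(a) \in L_U(w)$; symmetrically $yb \in \LL(Y)$ iff $\varphi(b) \in R_U(w)$; and $(a,b)$ is an edge of $\E_1(y,Y)$ iff $\varphi(a)\,w\,\varphi(b) \in \LL(X)$, i.e. iff $(\varphi(a),\varphi(b))$ is an edge of $\E_{U,U}(w)$. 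Thus $\varphi$ induces an isomorphism of bipartite graphs $\E_1(y,Y) \cong \E_{U,U}(w)$. Since $|w| = |\varphi(y)| \ge |y|$ (codewords are nonempty), it suffices to prove that $\E_{U,U}(w)$ is a tree for every $w \in \LL_{\ge m}(X)$: this yields that $\E_1(y,Y)$ is a tree for all $y$ with $|y| \ge m$, which is exactly the statement that $Y$ is eventually dendric with threshold $m$.

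The core step is therefore the following strengthening of Theorem~\ref{theoremGeneralized}, which I would prove by the same induction as in Proposition~\ref{propositionE_n}: \emph{if $X$ is eventually dendric with threshold $m$, and $U,V$ are finite $X$-maximal suffix and prefix codes with words of length at most $N$, then $\E_{U,V}(w)$ is a tree for every $w \in \LL_{\ge m}(X)$.} Starting point: by Theorem~\ref{theoremGeneralized} the graph $\E_N(w) = \E_{\LL_N(X),\LL_N(X)}(w)$ is a tree. Since $U$ is an $X$-maximal suffix code all of whose words have length at most $N$, every word of $\LL_N(X)$ has a unique suffix in $U$, so $U$ is reached from $\LL_N(X)$ by repeatedly collapsing, in order of decreasing length, every word $\ell \notin U$ all of whose left extensions $A\ell \cap \LL(X)$ are currently present, into $\ell$ itself (the antichain property guarantees one never descends below $U$, and the decreasing-length order guarantees the collapsed children are already in the current code). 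Each collapse is an instance of Lemma~\ref{lemmaMonatsHefte}, whose auxiliary hypothesis that $\E_{A,\LL_N(X)}(\ell w)$ is a tree is supplied by Lemma~\ref{lemmaFrancesco} together with Theorem~\ref{theoremGeneralized}, using $|\ell w| \ge |w| \ge m$. Each step preserves the tree property, so $\E_{U,\LL_N(X)}(w)$ is a tree; the symmetric versions of Lemma~\ref{lemmaMonatsHefte} and Lemma~\ref{lemmaFrancesco} then transform the right code $\LL_N(X)$ into $U$, giving that $\E_{U,U}(w)$ is a tree.

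The main obstacle I anticipate is precisely this transformation bookkeeping. One must order the collapsing operations so that at every intermediate stage the current family is still a finite $X$-maximal suffix (resp.\ prefix) code of bounded length and actually contains the set $A\ell \cap \LL(X)$ (resp.\ $\ell A \cap \LL(X)$) required by Lemma~\ref{lemmaMonatsHefte}, and one must verify that all the auxiliary graphs $\E_{A,\cdot}(\ell w)$ arising along the way are trees, which is where the finitely many length regimes $|\ell| < N$ force one to apply Lemma~\ref{lemmaFrancesco} exactly as in the two-step induction of Proposition~\ref{propositionE_n}. Once this is organized, the threshold is seen to be $m$: the whole argument invokes only the graphs $\E_n(w',X)$ with $|w'|\ge m$, and the inequality $|\varphi(y)| \ge |y|$ ensures that no data below length $m$ is ever needed. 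This is the same scheme used in the dendric case of~\cite{BertheDeFeliceDolceLeroyPerrinReutenauerRindone2015}, with irreducibility replaced by the hypothesis that $U$ is two-sided $X$-complete, which is exactly what makes $U$ at once an $X$-maximal prefix and suffix code.
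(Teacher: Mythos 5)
Your reduction to generalized extension graphs is exactly the paper's: since $U$ is two-sided $X$-complete it is both an $X$-maximal suffix code and an $X$-maximal prefix code, $\varphi$ induces an isomorphism $\E_1(y,Y)\cong\E_{U,U}(\varphi(y))$, and $|\varphi(y)|\ge |y|$ gives the same threshold. The paper also isolates the same key statement (its Lemma~\ref{lemmaGeneralized}), but proves it by a single induction on the sum of the lengths of the words of $U$ and $V$, using the expansion direction of Lemma~\ref{lemmaMonatsHefte}: for $u=a\ell\in U$ of maximal length one sets $U'=(U\setminus A\ell)\cup\{\ell\}$, and both required trees, $\E_{U',V}(w)$ and the auxiliary $\E_{A,V}(\ell w)$, are instances of the same induction hypothesis, the latter because the pair $(A,V)$ has smaller total length and $\ell w\in\LL_{\ge m}(X)$. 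Your route instead starts from $\E_N(w)$ (Theorem~\ref{theoremGeneralized}) and collapses $\LL_N(X)$ down to $U$; this is a genuinely different organization, but as written it contains a concrete error. Your rule collapses $A\ell\cap\LL(X)$ into $\ell$ whenever $\ell\notin U$ and all extensions are currently present, and this condition is wrong on both sides. To terminate at $U$ you must perform collapses whose target lies \emph{in} $U$: a word $u\in U$ with $|u|<N$ can only enter the current code as the target of the collapse $Au\cap\LL(X)\to u$, and your rule forbids precisely these steps. Conversely, your rule permits collapses that overshoot $U$: if $a\ell,b\ell\in U$ have length $N$ and $A\ell\cap\LL(X)=\{a\ell,b\ell\}$, then $\ell\notin U$ (because $U$ is a suffix code) and all extensions are present in the initial code $\LL_N(X)$, so your rule collapses these two words of $U$ into their proper suffix $\ell$, contradicting your parenthetical claim that the antichain property prevents descending below $U$. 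The correct rule is to collapse into $\ell$ exactly when no word of $U$ has $\ell$ as a proper suffix (equivalently, when $\ell$ still has a suffix in $U$), taking at each stage a word of maximal length not in $U$.

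The second weak point is the supply of the auxiliary trees. During the left-side collapsing the right code stays fixed at $V=\LL_N(X)$, so each application of Lemma~\ref{lemmaMonatsHefte} needs $\E_{A,\LL_N(X)}(\ell w)$ to be a tree. Lemma~\ref{lemmaFrancesco} yields this only from the hypothesis that $\E_{\LL_n(X),\LL_N(X)}(w')$ is a tree for every $w'\in\LL_{\ge m}(X)$, for some $n\le|\ell|+1$; Theorem~\ref{theoremGeneralized} supplies that hypothesis only in the square case $n=N$, hence only for the first layer $|\ell|=N-1$. For shorter $\ell$ one needs the rectangular statements $\E_{\LL_k(X),\LL_N(X)}(w')$ with $k<N$, which must themselves be established by interleaving uniform collapses of $\LL_{k+1}(X)$ onto $\LL_k(X)$ with further applications of Lemma~\ref{lemmaFrancesco}; you acknowledge this (``as in the two-step induction of Proposition~\ref{propositionE_n}'') but do not carry it out, and it is heavier than in Proposition~\ref{propositionE_n} because $U$ may mix many different lengths. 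Both defects are repairable, but the repair essentially reconstructs the paper's induction, which handles arbitrary finite maximal codes in one stroke and needs neither Theorem~\ref{theoremGeneralized} nor Lemma~\ref{lemmaFrancesco}.
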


Note that any $X$-maximal suffix code $U$ one has $\Card(U) \ge \Card(X \cap A)$.
Indeed, every $a \in A$ appears as a suffix of (at least) an element of $X$.

\begin{lemma}
\label{lemmaGeneralized}
Let $X$ be an eventually dendric shift space with threshold $n$. 
For any $w \in \LL_{\ge n}(X)$, any $X$-maximal suffix code $U$ and any $X$-maximal prefix code $V$, the graph $\E_{U,V}(w)$ is a tree.
\end{lemma}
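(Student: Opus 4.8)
The first observation is that every $X$-maximal suffix or prefix code occurring here is automatically finite: being left- (resp. right-) $X$-complete, such a code is contained in $\LL_{\le N_0}(X)$ for the completeness threshold $N_0$, and $\LL_{\le N_0}(X)$ is finite. Hence both $L_U(w)$ and $R_V(w)$ are finite and the elementary moves of Lemma~\ref{lemmaMonatsHefte} are available. The plan is to start from the graphs we already control, namely $\E_k(w)=\E_{\LL_k(X),\LL_k(X)}(w)$, which are trees for every $k\ge 1$ and every $w\in\LL_{\ge n}(X)$ by Theorem~\ref{theoremGeneralized}, and to reach an arbitrary pair $(U,V)$ by repeatedly applying the move of Lemma~\ref{lemmaMonatsHefte}: replacing a full left block $A\ell\cap\LL(X)$ by the single word $\ell$ (and its right analogue) preserves the tree property, provided $\E_{A,V}(\ell w)$ is a tree. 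Guaranteeing this proviso is the whole difficulty. The tool for it is Lemma~\ref{lemmaFrancesco}, but that lemma only produces ``$\E_{A,V}(\ell w)$ is a tree'' for $\ell$ of length at least $n-1$ when the left code is \emph{exactly} $\LL_n(X)$; one therefore cannot collapse a long code to a short one in a single sweep. The crux of the argument is to circumvent this by descending (or ascending) one length-layer at a time, each layer's tree property feeding Lemma~\ref{lemmaFrancesco} to license the collapses producing the next.

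I would first prove the rectangular case: $\E_{\LL_j(X),\LL_k(X)}(w)$ is a tree for all $j,k\ge 1$ and all $w\in\LL_{\ge n}(X)$. Fix $k$ and induct on $j$, the base case $j=k$ being $\E_k(w)$, a tree by Theorem~\ref{theoremGeneralized}. Assuming $\E_{\LL_j(X),\LL_k(X)}(w)$ is a tree for every $w$, Lemma~\ref{lemmaFrancesco} (with left code $\LL_j(X)$ and right code $\LL_k(X)$) gives that $\E_{A,\LL_k(X)}(\ell w)$ is a tree for every $\ell\in\LL_{\ge j-1}(X)$. This is exactly what is needed to transform, via Lemma~\ref{lemmaMonatsHefte}, the left code $\LL_j(X)$ into $\LL_{j-1}(X)$ (collapsing every block $A\ell\cap\LL(X)$ with $\ell\in\LL_{j-1}(X)$) or, running the same ``if and only if'' backwards, into $\LL_{j+1}(X)$. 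Since each such move preserves the tree property, this yields the rectangular case for every $j$.

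With the rectangular case in hand, $\E_{A,\LL_k(X)}(\ell w)$ is a tree for \emph{every} $\ell$ (apply Lemma~\ref{lemmaFrancesco} with left code $\LL_{|\ell|+1}(X)$), so now any left collapse is licensed. I would then fix the right code $\LL_k(X)$ and pass to an arbitrary $X$-maximal suffix code $U$ on the left: writing $N$ for the maximal length in $U$, the code $U$ is obtained from $\LL_N(X)$ by a finite sequence of collapses $A\ell\cap\LL(X)\mapsto\{\ell\}$ (pruning the suffix tree down to the antichain $U$), each licensed as just noted; starting from the tree $\E_{\LL_N(X),\LL_k(X)}(w)$, Lemma~\ref{lemmaMonatsHefte} propagates the tree property to $\E_{U,\LL_k(X)}(w)$. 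Finally, the symmetric argument---now keeping $U$ fixed and collapsing $\LL_k(X)$ down to an arbitrary $X$-maximal prefix code $V$, the moves being licensed through the symmetric forms of Lemmas~\ref{lemmaFrancesco} and~\ref{lemmaMonatsHefte} by the facts just established for all $\LL_q(X)$---shows that $\E_{U,V}(w)$ is a tree, completing the proof. The one genuinely delicate point, and the step I expect to require the most care, is the length-layer bootstrap of the second paragraph: everything else is a bookkeeping of which elementary moves are legal and in what order.
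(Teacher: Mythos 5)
Your proof is correct (for finite codes $U,V$, which is all the application in Theorem~\ref{theoremCompleteBifixDecoding} needs and what the paper's own proof tacitly assumes), but it takes a genuinely different and heavier route. The paper proves the lemma by a single induction on the sum of the lengths of the words of $U$ and $V$: the base case is when both codes reduce to the alphabet, where $\E_{U,V}(w)=\E_1(w)$ is a tree by eventual dendricity; in the inductive step one picks a word $u=a\ell\in U$ of maximal length, forms $U'=(U\setminus A\ell)\cup\{\ell\}$, and concludes by Lemma~\ref{lemmaMonatsHefte}. The point you single out as ``the whole difficulty'' --- licensing the proviso that $\E_{A,V}(\ell w)$ is a tree --- is discharged in the paper by the \emph{same} induction hypothesis: the pair $(A\cap\LL(X),V)$ has strictly smaller total length than $(U,V)$, and $\ell w$ still lies in $\LL_{\ge n}(X)$, so $\E_{A,V}(\ell w)$ is a tree by induction. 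Because the statement is quantified over all $w\in\LL_{\ge n}(X)$ simultaneously, no appeal to Theorem~\ref{theoremGeneralized} or Lemma~\ref{lemmaFrancesco} is needed at all. Your three-stage bootstrap (rectangular graphs $\E_{\LL_j(X),\LL_k(X)}(w)$ via Theorem~\ref{theoremGeneralized} and Lemma~\ref{lemmaFrancesco}, then pruning the left code from $\LL_N(X)$ down to $U$, then the right code down to $V$) is valid --- each collapse and expansion is licensed as you claim, and the pruning of a maximal suffix code down from $\LL_N(X)$ is the standard suffix-tree argument --- but it essentially re-runs the machinery of Proposition~\ref{propositionE_n}, where the paper's induction makes the lemma self-contained given Lemma~\ref{lemmaMonatsHefte}. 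What your route buys is explicitness about the rectangular intermediate graphs; what it costs is dependence on the upstream theorem and a longer chain of moves.

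One genuine flaw, though inessential: your ``first observation'' that every $X$-maximal suffix (or prefix) code is automatically finite is false, and the argument for it is circular. The equivalence ``$X$-maximal iff left $X$-complete'' quoted in Section 7 of the paper is really a statement about \emph{finite} codes; for infinite codes maximality does not imply completeness, so you cannot use completeness to bound the lengths. Infinite $X$-maximal suffix codes do exist even in the Fibonacci shift: since $\varphi^{n+2}(a)=\varphi^{n+1}(a)\varphi^{n}(a)$, the words $\varphi^{2k+1}(a)$ form a chain under the suffix order, all of them left-special, so there is a left-infinite word $y$ with infinitely many left-special suffixes, and the set of words of $\LL(X)$ that are not suffixes of $y$ but all of whose proper suffixes are suffixes of $y$ is an infinite $X$-maximal suffix code. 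Indeed the paper itself allows infinite maximal suffix codes in Proposition~\ref{propositionBound}. The correct fix is simply to state the lemma (as the paper implicitly does) for finite maximal codes; with that restriction the rest of your argument stands.
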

\begin{proof}
We use an induction on the sum of the lengths of the words in $U,V$.
The property is true if the sum is equal to $2 \Card(X \cap A)$.
Indeed, for every $w \in \LL_{\ge n}(X)$ one has $U = L(w)$ and $V = R(w)$ and thus $\E_{U,V}(w) = \E_1(w)$ is a tree.
Otherwise, we may assume that $U$ contains words of length at least $2$ (the case with $V$ being symmetrical).
Let $u \in U$ be of maximal length.
Set $u = a\ell$ with $a \in A$.
Since $U$ is an $X$-maximal suffix code, we have $A\ell \cap \LL(X) \subset U$.
Set $U' = (U \setminus A\ell) \cup \{ \ell \}$.
By induction hypothesis, the graphs $\E_{U',V}(w)$ and $\E_{A,V}(\ell w)$ are trees.
Thus, by Lemma~\ref{lemmaMonatsHefte}, $\E_{U,V}(w)$ is also a tree.
\end{proof}

\begin{proofof}{of Theorem~\ref{theoremCompleteBifixDecoding}}
Assume that $X$ is eventually dendric with threshold $n$.
Let $\varphi: B \rightarrow U$ be a coding morphism for $U$ and let $Y$ be the decoding of $X$ corresponding to $U$.
Consider a word $w$ of $\LL(Y)$ of length at least $n$.
By Lemma~\ref{lemmaGeneralized}, and since $|\varphi(w)|\ge n$, the graph $\E_{U,U}(\varphi(w))$ is a tree.
But for $b,c \in B$, one has $bwc \in \LL(Y)$ if and only if $\varphi(bwc) \in \LL(X)$, that is, if and only if $(\varphi(b), \varphi(c)) \in E_1(\varphi(w))$.
Thus $\E_1(w)$ is isomorphic to $\E_{U,U}(\varphi(w))$ and thus $\E_1(w)$ is a tree.
This shows that $Y$ is eventually dendric with threshold $n$.
\end{proofof}

\begin{example}
\label{ex:Fibobifix}
Let $X$ be the Fibonacci shift. Then $U = \{ aa, aba, b \}$ 
is an $X$-maximal bifix code.
Let $\varphi : \{ u,v,w \} \to U$ be the coding morphism for $U$ 
defined by $\varphi: u \mapsto aa, v \mapsto aba, w \mapsto b$.
The  complete bifix decoding of $X$ with respect to $U$ is an eventually dendric shift space with threshold $0$.
It is actually the natural coding of an interval exchange transformation on three intervals (see~\cite{bifixcodesintervalexchanges}).
The extension graphs $\E_1(\varepsilon, Y)$ and $\E_1(v,Y)$ are shown in Figure~\ref{fig:FiboZ2}.

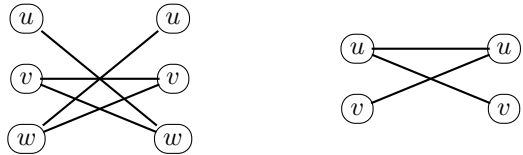
\begin{figure}[hbt]
	\centering
	\tikzset{node/.style={rectangle,draw,rounded corners=1.2ex}}
	\begin{tikzpicture}
	\node[node](u1l) {$u$};
	\node[node](v1l) [below= 0.4cm of u1l] {$v$};
	\node[node](w1l) [below= 0.4cm of v1l] {$w$};
	\node[node](u1r) [right= 1.5cm of u1l] {$u$};
	\node[node](v1r) [below= 0.4cm of u1r] {$v$};
	\node[node](w1r) [below= 0.4cm of v1r] {$w$};
	\path[draw,thick, shorten <=0 -1pt, shorten >=-1pt]
		(u1l) edge node {} (w1r)
		(v1l) edge node {} (v1r)
		(v1l) edge node {} (w1r)
		(w1l) edge node {} (u1r)
		(w1l) edge node {} (v1r);
	\node[node](u2l) [below right = 0cm and 2cm of u1r] {$u$};
	\node[node](v2l) [below= 0.4cm of u2l] {$v$};
	\node[node](u2r) [right= 1.5cm of u2l] {$u$};
	\node[node](v2r) [below= 0.4cm of u2r] {$v$};
	\path[draw,thick, shorten <=0 -1pt, shorten >=-1pt]
		(u2l) edge node {} (u2r)
		(u2l) edge node {} (v2r)
		(v2l) edge node {} (u2r);
	\end{tikzpicture}
	\caption{The graphs $\E_1(\varepsilon,Y)$ and $\E_1(w,Y)$.}
	\label{fig:FiboZ2}
\end{figure}
\end{example}

A particular case of complete bifix decoding is related to a notion which is well-known in topological dynamics, namely the skew product of two dynamical systems (see~\cite{CornfeldFominSinai1982}).
Indeed, assume that when we start with a permutation group $G$ on a set $Q$ and a morphism $f:A^*\rightarrow G$.
We denote $q \mapsto q \cdot w$ the result of the action of the permutation $f(w)$ on the point $q \in Q$.
Fix a point $i \in Q$.
The set of words $w$ such that $i\cdot w = i$ is a submonoid generated by a bifix code $U$ which is two-sided complete (this follows from \cite[Theorem 4.2.11]{BerstelDeFelicePerrinReutenauerRindone2012}).
The corresponding decoding is a shift space which is related to the skew product of $(X, \sigma)$ and $(G,Q)$.
It is the shift space $Y$ on the alphabet $A \times Q$ formed by the labels of the two-sided infinite paths on the graph with vertices $Q$ and edges $(p,q)$ labeled $(a,p)$ for $a \in A$ such that $p \cdot f(a)=q$.
The decoding of $X$ corresponding to $U$ is the dynamical system induced by $Y$ on the set of $y \in Y$ such that $y_0=(a,i)$ for some $a \in A$.

\begin{example}
\label{ex:FiboZ2}
Let $X$ be the Fibonacci shift, let $Q = \{ 1,2 \}$ and $G = \Z/2\Z$.
Let $f : A^* \rightarrow G$ be the morphism $a \mapsto (12), b \mapsto(1)$.
Choosing $i=1$, the bifix code $U$ build as above is $U = \{ aa,aba,b \}$ as in Example~\ref{ex:Fibobifix}.
\end{example}

\section{Conclusion}
\label{sec:conclusion}

The class of eventually dendric shifts is shown in this paper to have strong closure properties.
It leaves open the question of whether it is closed under taking \emph{factors}, that is, images by a sliding block code not necessarily bijective.

It would  be interesting to know how other properties which are known to hold for dendric shifts
extend to the this more general class.
This includes the following:
\begin{enumerate}
	\item To which extent the properties of return words proved for minimal dendric shifts extend to eventually dendric ones.
	For example, what can we say about the subgroup of the free group generated by return words to a given word?
	In~\cite{BertheDeFeliceDolceLeroyPerrinReutenauerRindone2015} it is proved that for minimal dendric sets, every set of return words is a basis of the free group, while in the case of specular sets, the set of return word to a fixed word is a basis of a particular subgroup called the even subgroup (see~\cite{BertheDeFeliceDelecroixDolceLeroyPerrinRindone2017}).
	\item Is there a finite $S$-adic representation for all minimal eventually dendric shifts?
	There is one for minimal dendric shifts~\cite{BertheDeFeliceDolceLeroyPerrinReutenauerRindone2013m}.
	\item Is the property of being eventually dendric decidable for a substitutive shift, as it is for dendric ones \cite{DolceKyriakoglouLeroy2016}?
\end{enumerate}

It would also be interesting to know whether the conjugacy of effectively given
eventually dendric shifts is decidable (the conjugacy of substitutive shifts was recently shown to be decidable~\cite{DurandLeroy2018}).

\bibliographystyle{plain}

\begin{thebibliography}{10}
	
	\bibitem{BalkovaPelantovaSteiner2008}
	{\soft{L}}ubom{\'{\i}}ra Balkov{\'a}, Edita Pelantov{\'a}, and Wolfgang
	Steiner.
	\newblock Sequences with constant number of return words.
	\newblock {\em Monatsh. Math.}, 155(3-4):251--263, 2008.
	
	\bibitem{BerstelDeFelicePerrinReutenauerRindone2012}
	Jean Berstel, Clelia De~Felice, Dominique Perrin, Christophe Reutenauer, and
	Giuseppina Rindone.
	\newblock Bifix codes and {S}turmian words.
	\newblock {\em J. Algebra}, 369:146--202, 2012.
	
	\bibitem{CodesAutomata}
	Jean Berstel, Dominique Perrin, and Christophe Reutenauer.
	\newblock {\em Codes and Automata}.
	\newblock Cambridge University Press, 2009.
	
	\bibitem{BertheDeFeliceDelecroixDolceLeroyPerrinRindone2017}
	Val\'erie Berth\'e, Clelia De~Felice, Vincent Delecroix, Francesco Dolce,
	Julien Leroy, Dominique Perrin, Christophe Reutenauer, and Giuseppina
	Rindone.
	\newblock Specular sets.
	\newblock {\em Theoret. Comput. Sci.}, 684:3--28, 2017.
	
	\bibitem{BertheDeFeliceDolceLeroyPerrinReutenauerRindone2015}
	Val\'erie Berth\'e, Clelia De~Felice, Francesco Dolce, Julien Leroy, Dominique
	Perrin, Christophe Reutenauer, and Giuseppina Rindone.
	\newblock Acyclic, connected and tree sets.
	\newblock {\em Monatsh. Math.}, 176(4):521--550, 2015.
	
	\bibitem{bifixcodesintervalexchanges}
	Val{\'e}rie Berth{\'e}, Clelia De~Felice, Francesco Dolce, Julien Leroy,
	Dominique Perrin, Christophe Reutenauer, and Giuseppina Rindone.
	\newblock Bifix codes and interval exchanges.
	\newblock {\em J. Pure Appl. Algebra}, 219(7):2781--2798, 2015.
	\newblock (http://dx.doi.org/10.1016/j.jpaa.2014.09.028).
	
	\bibitem{BertheDeFeliceDolceLeroyPerrinReutenauerRindone2013m}
	Val\'erie Berth\'e, Clelia De~Felice, Francesco Dolce, Julien Leroy, Dominique
	Perrin, Christophe Reutenauer, and Giuseppina Rindone.
	\newblock Maximal bifix decoding.
	\newblock {\em Dicrete Math.}, 338:725--742, 2015.
	
	\bibitem{Cassaigne1997}
	Julien Cassaigne.
	\newblock Complexit\'e et facteurs sp\'eciaux.
	\newblock {\em Bull. Belg. Math. Soc. Simon Stevin}, 4(1):67--88, 1997.
	\newblock Journ{\'e}es Montoises (Mons, 1994).
	
	\bibitem{CornfeldFominSinai1982}
	Isaac~P. Cornfeld, Sergei~V. Fomin, and Yakov~G. Sinai.
	\newblock {\em Ergodic theory}, volume 245 of {\em Grundlehren der
		Mathematischen Wissenschaften [Fundamental Principles of Mathematical
		Sciences]}.
	\newblock Springer-Verlag, New York, 1982.
	\newblock Translated from the Russian by A. B. Sosinskii.
	
	\bibitem{DolceKyriakoglouLeroy2016}
	Francesco Dolce, Revekka Kyriakoglou, and Julien Leroy.
	\newblock Decidable properties of extension graphs for substitutive languages.
	\newblock 2016.
	\newblock 15\`emes Journ\'ees Montoises d’informatique th\'eorique, Li\`ge
	(Belgique).
	
	\bibitem{DolcePerrin2017}
	Francesco Dolce and Dominique Perrin.
	\newblock Neutral and tree sets of arbitrary characteristic.
	\newblock {\em Theoret. Comput. Sci.}, 658(part A):159--174, 2017.
	
	\bibitem{DonosoDurandMaassPetite2016}
	Sebastian Donoso, Fabien Durand, Alejandro Maass, and Samuel Petite.
	\newblock On automorphism groups of low complexity subshifts.
	\newblock {\em Ergod. Th. Dynam. Sys.}, 36:64--95, 2016.
	
	\bibitem{DurandLeroy2018}
	Fabien Durand and Julien Leroy.
	\newblock Decidability of the isomorphism and the factorization between minimal
	substitution subshifts.
	\newblock 2018.
	\newblock \url{https://arxiv.org/abs/1806.04891}.
	
	\bibitem{DurandLeroyRichomme2013}
	Fabien Durand, Julien Leroy, and Gwena{\"e}l Richomme.
	\newblock Do the properties of an {$S$}-adic representation determine factor
	complexity?
	\newblock {\em J. Integer Seq.}, 16(2):Article 13.2.6, 30, 2013.
	
	\bibitem{PytheasFogg2002}
	N.~Pytheas Fogg.
	\newblock {\em Substitutions in dynamics, arithmetics and combinatorics},
	volume 1794 of {\em Lecture Notes in Mathematics}.
	\newblock Springer-Verlag, Berlin, 2002.
	\newblock Edited by V. Berth{\'e}, S. Ferenczi, C. Mauduit and A. Siegel.
	
	\bibitem{LindMarcus1995}
	Douglas Lind and Brian Marcus.
	\newblock {\em An Introduction to Symbolic Dynamics and Coding}.
	\newblock Cambridge University Press, Cambridge, 1995.
	
	\bibitem{Queffelec2010}
	Martine Queff{\'e}lec.
	\newblock {\em Substitution dynamical systems---spectral analysis}, volume 1294
	of {\em Lecture Notes in Mathematics}.
	\newblock Springer-Verlag, Berlin, second edition, 2010.
	
	\bibitem{Vesely2018}
	Vojtch Vesely.
	\newblock Properties of morphic images of {$S$}-adic sequences.
	\newblock Master's thesis, Czech Technical University, 2018.
	
\end{thebibliography}
\def\cprime{$'$}

\end{document}